\documentclass[11pt]{article}
\usepackage{amsfonts,latexsym,amsmath,amscd,geometry}
\usepackage{graphicx}
\usepackage{amssymb}
\usepackage{latexsym}
\usepackage{xcolor}
\usepackage{indentfirst}

\usepackage{amssymb,mathrsfs,mathtools,graphicx,enumerate,color,colortbl}
\usepackage{hyperref}
\usepackage{comment}

\newcommand{\Dcal}{\mathcal{D}}
\newcommand{\Ecal}{\mathcal{E}}

\newcommand{\Hcal}{\mathcal{H}}

\newcommand{\Jcal}{\mathcal{J}}

\newcommand{\Scal}{\mathcal{S}}

\newcommand{\Xcal}{\mathcal{X}}

\renewcommand{\hbar}{\bar{h}}

\newcommand{\ubar}{\bar{u}}

\newcommand{\lbar}{\bar{\lambda}}

\newcommand{\util}{\tilde{u}}

\renewcommand{\a}{\alpha}
\renewcommand{\b}{\beta}

\renewcommand{\d}{\delta}

\newcommand{\et}{\eta}
\renewcommand{\th}{\theta}

\renewcommand{\l}{\lambda}

\newcommand{\m}{\mu}
\newcommand{\n}{\nu}
\newcommand{\x}{\xi}

\renewcommand{\r}{\rho}
\newcommand{\s}{\sigma}
\renewcommand{\t}{\tau}

\newcommand{\e}{\varepsilon}

\newcommand{\RR}{{\mathbb R}}
\newcommand{\NN}{{\mathbb N}}

\newcommand{\abs}[1]{\left|#1\right|}

\newcommand{\norm}[1]{\left\|#1\right\|}

\newcommand{\one}[1]{\mathbf{1}_{\{#1\}}}

\newcommand{\bci}[1]{\noindent\textbf{Control of \(#1\):}} 
\newcommand{\step}[1]{\vskip0.2cm \noindent{\it Step #1:} }
\newcommand{\case}[1]{\noindent{\it Case #1:} }

\newcommand{\uun}{\underline{u}^\nu}

\DeclareMathOperator{\supp}{supp}

\definecolor{black}{rgb}{0.0, 0.0, 0.0}
\definecolor{red}{rgb}{1.0, 0.5, 0.5}

\newcommand{\rd}{\partial}

\newcommand{\uubar}{\underline{u}}

\newcommand \nc{\newcommand}
\newtheorem{theorem}{Theorem}[section]
\newtheorem{lemma}[theorem]{Lemma}
\newtheorem{proposition}[theorem]{Proposition}

\newtheorem{remark}[theorem]{Remark}

\nc{\ba}{\begin{array}}\nc{\ea}{\end{array}}
\nc{\be}{\begin{eqnarray}}\nc{\ee}{\end{eqnarray}}
\nc{\beq}{\begin{equation}}\nc{\eeq}{\end{equation}}
\nc{\bex}{\begin{eqnarray*}}\nc{\eex}{\end{eqnarray*}}
\nc{\btm}{\begin{theorem}} \nc{\etm}{\end{theorem}}
\nc{\blm}{\begin{lemma}} \nc{\elm}{\end{lemma}}
\nc{\R}{\mathbb{R}}  

\newcommand \qed {\hfill $\Box$}

\newcommand{\subjclass}[1]{\par\medskip\noindent 2020 \textit{Mathematics Subject Classification}. #1.}
\newcommand{\keywords}[1]{\par\medskip\noindent \textit{Key words and phrases}. #1.}

\begin{document}

\title{Uniform Stability of Oscillatory Shocks for KdV-Burgers Equation}

\author{Geng Chen
\footnote{Department of Mathematics, University of Kansas, Lawrence, KS 66045, U.S.A. Email: gengchen@ku.edu},
\quad
Namhyun Eun
\footnote{School of Mathematics, Korea Institute for Advanced Study (KIAS), Seoul 02455, Republic of Korea. Email: namhyuneun@kias.re.kr},
\quad
Moon-Jin Kang
\footnote{Department of Mathematical Sciences, Korea Advanced Institute of Science and Technology, Daejeon 34141, Republic of Korea. Email: moonjinkang@kaist.ac.kr},
\quad
Yannan Shen
\footnote{Department of Mathematics, University of Kansas, Lawrence, KS 66045, U.S.A. Email: yshen@ku.edu}
}



\maketitle

\begin{abstract}
We study viscous-dispersive shock waves with infinite oscillations of the Korteweg--de Vries--Burgers (KdVB) equation.
First, we establish detail structures of the shock waves, including the rates at which the local extrema converge to the left end state towards the left far field.
Then, by exploiting the structural properties of the shocks, we show the \(L^2\)-contraction property of the shock profiles under arbitrarily large perturbations, up to time-dependent shifts.
This property implies both time-asymptotic stability and uniform stability with respect to the viscosity and dispersion coefficients.
This uniformity yields the existence of zero viscosity-dispersion limits, on which Riemann shocks are orbitally stable.

\subjclass{35B35; 35Q53; 35L67; 76L05}

\keywords{Korteweg--de Vries--Burgers equation; Oscillatory shock, Contraction, Uniform stability, Zero viscosity-dispersion limit}
\end{abstract}

\section{Introduction}
\setcounter{equation}{0}
We consider the following form of Korteweg--de Vries--Burgers (KdVB) equation: 
\begin{equation} \label{burgers}
u_t + \Big(\frac{u^2}{2}\Big)_x = \e u_{xx}-\d u_{xxx}, \qquad u(0,x)=u_0(x)
\end{equation}
where \(u=u(t,x)\) is the unknown defined on \(t>0\) and \(x\in\RR\), and two constants \(\e,\d>0\) represent the viscosity and dispersion coefficients, respectively.
This model constitutes one of the most fundamental equations incorporating the combined effects of nonlinearity, viscosity (dissipation), and dispersion, and thus plays a central role in the study of numerous physical systems.
In particular, the KdVB equation serves as a fundamental model of waves on shallow water surfaces, which was first introduced (without dissipation) by Boussinesq \cite{B1872,B1877} and Korteweg--de Vries \cite{KdV1895}.
The form in \eqref{burgers} was later derived by Su--Gardner \cite{SG-JMP} (see also \cite{Johnson1,Johnson2,OS70}).
For modelling shallow water waves, \(u\) is the height displacement of the water surface from its equilibrium height.
Moreover, the KdVB equation can be regarded as a reduced prototype of the Navier--Stokes--Korteweg system \cite{Dunn}, capturing both viscosity and capillarity effects.

\vspace{2mm}
The equation \eqref{burgers} is known for its traveling wave solutions describing viscous-dispersive shocks with infinite oscillations \cite{bona}.
The main objective of this paper is to establish the stability of the oscillatory viscous-dispersive shocks, under arbitrarily large \(H^1\) perturbations.
The shock for \eqref{burgers} can be described as a traveling wave solution \(\util(\x)=\util(x-\s t)\), which satisfies
\begin{equation} \label{visS}
\begin{cases}
-\s \util' + (\frac{1}{2}\util^2)' =\e\util'' -\d\util''',\\
\lim_{\x\to\pm\infty} \util(\x) = u_\pm,\\
\lim_{\x\to\pm\infty} \util'(\x) = 0,
\end{cases}
\end{equation}
where \(u_->u_+\) and the traveling wave speed \(\s\) is given by 
\begin{equation} \label{sigma}
\s = \frac{\frac{1}{2}u_+^2-\frac{1}{2}u_-^2}{u_+-u_-} = \frac{u_-+u_+}{2}
\end{equation}
which aligns with the Rankine--Hugoniot condition for shock speed of Burgers equation.

\vspace{2mm}
The existence and stability of traveling wave solutions were first investigated by numerical methods at a very early stage \cite{CG-Numeric}.
Several years later, the existence of traveling waves was established by Bona--Schonbek \cite{bona} in 1985, where it was further shown that the behaviors of the viscous-dispersive shocks \(\util\) described above highly depend on the critical parameter \(2\d(u_--u_+)/\e^2\).
In fact, when \(2\d(u_--u_+)/\e^2\leq 1\), i.e., when the viscosity dominates the dispersion effect, the linearized equation of \eqref{visS} around the equilibrium state \(\util=u_-\) has real eigenvalue(s).
So the traveling wave solution \(\util\) is monotonically decreasing.
In 1985, Pego \cite{Pego} showed the time-asymptotic stability of monotonic shock profiles.
Since monotonic viscous-dispersive shock profiles share qualitative structures with viscous shock profiles, the anti-derivative method by Goodman \cite{Goodman}, Kawashima--Matsumura \cite{KM-1985} and Matsumura--Nishihara \cite{MN-1985} for viscous shocks was adopted in \cite{Pego}.
The approach developed in the present paper can also be applied to improve \cite{Pego} by allowing arbitrarily large perturbations.
However, to avoid distraction, we will address this simpler case in a separate paper.
More broadly, the stability of traveling waves has long been a central topic in various contexts (see, e.g., \cite{Mat-HB,Sand-HB}).
We refer to classical stability results \cite{Zumbrun2, Sattinger, Zumbrun1}.
We also mention an interesting work of Carter--de Rijk--Sandstede \cite{CdRS} on the stability of traveling pulses with oscillatory tails.

\vspace{2mm}
The much more interesting case is when the dispersion dominates, i.e., when \(2\d(u_--u_+)/\e^2> 1\).
In this case, the non-monotone traveling dispersive shock wave solutions \(\util\) oscillate around \(\util=u_-\) infinitely many times as \(\x \to -\infty\).

Many different methods have been used to deal with the fast oscillations in general.
In \cite{Wh}, Whitham derived a hyperbolic system with Riemann invariants for the short-wavelength nonlinear oscillations and Gurevich--Pitaevskii \cite{GP} patched it with Riemann solutions of zero dispersion limits by two moving boundaries out of the transition area.
Another well-known framework for the zero-dispersion limit was established by Lax and Levermore, in which they used the method of inverse scattering and spectral approximation \cite{LL-1,LL-2,LL-3}.
The study was also extended to more general flux, for example the rigorous justification of the Whitham modulation equations for the generalized KdV--Burgers equation was obtained in \cite{JZ}.
The generalized KdV equation with flux \(u^p\) and $\epsilon=0$, is linearly unstable when \(p>5\),  \(H^1\) stable under small perturbation when \(2<p<5\), as shown in \cite{PegoWeinstein}. 
See also the references in \cite{PegoWeinstein} for other results for the generalized KdV--Burgers equation.

For the KdVB equation, the stability of oscillatory viscous-dispersive shocks is a major open problem.
Khodja \cite{Khodja} and Naumkin--Shishmarev \cite{Naumkin} considered the stability of oscillatory shocks using a perturbative approach.
Their analysis builds upon the anti-derivative method for monotone shocks \cite{Goodman,Pego}, where a small anti-derivative of the perturbation is required and the \(L^\infty\) norm of the perturbation was proved to decay to zero.
However, as the argument is perturbative around monotonic shock profiles, the results appear to be restricted to profiles that are close enough to the monotone regime, and thus do not seem to capture the genuinely nonlinear oscillatory setting.

In a recent work \cite{Hur}, Barker--Bronski--Hur--Yang studied the contraction of large \(L^2\)-perturbation for oscillatory shocks of the KdVB equation, under some spectral assumption.
More precisely, their starting point is to observe the time-evolution of $L^2$-perturbation with time-dependent shifts (or temporal modulations) as in Lemma \ref{lem:RHS}, and show that if the Schr\"odinger operator associated with the last two terms of \eqref{key1} has precisely one bounded state then the shock is orbitally stable.
To find a sufficient condition for the spectral assumption, a computer-assisted analysis was used.
Their sufficient condition boils down to the assumption on \(2\d(u_--u_+)/\e^2\), that is, they assume $\delta\in(0.2533,3.9)$ with the normalization $u_\pm = \mp 1$ and $\e=1$ for the oscillatory regime.

\vspace{2mm}
In the present paper, we rigorously prove the \(L^2\) stability of the oscillatory shocks, in particular, \(1<2\d(u_--u_+)/\e^2<2\).
Here, the upper bound \(2\) for the critical parameter \(2\d(u_--u_+)/\e^2\) is not the optimal bound of our framework, but is enough to show the strength of our method to go beyond the threshold to the regime of non-monotone dispersive shocks.
We do not attempt to obtain the optimal upper bound, in order to keep the proofs accessible to the readers. 

Another strength of our stability result is that it yields a uniform estimate with respect to the viscosity and dispersion coefficients.
This uniformity enables us to justify the zero viscosity-dispersion limit of \eqref{burgers} to the (inviscid) Burgers equation.
Our result also fills the remaining gap in \cite{Hur} by covering the range \(\d\in(0.25,0.2533)\).

\vspace{2mm}
To establish the desired stability, we provide several detailed structural properties of the shock profiles \(\util\).
These structures are highly useful for future studies on viscous-dispersive shocks.
The rich dynamics of dispersive shocks under many types of flux (convex or nonconvex) were explored and experimented, see \cite{Heofer} and references therein.
The dispersive shocks also show up in various physical contexts, such as plasma physics \cite{Biskamp}, traffic flow \cite{Lighthill}, optical fibers \cite{Xu}, nematic liquid crystals \cite{Smyth}, quantum fluids  \cite{HoeferAblowitz} and nonlinear dynamical lattices \cite{Biondini}.
We expect that the methodology developed here can be extended to many of these contexts.

\subsection{Main Results}
We now introduce the main results of the present paper.
To state the first theorem, we introduce the following function class to describe perturbations:
\[
\Xcal_T \coloneqq
\{u\colon\RR^+\times\RR\to\RR \mid u-\uubar \in C([0,T];H^1(\RR)) \cap L^2(0,T;H^2(\RR))\}
\]
where \(\uubar\) is a smooth monotone function which satisfies
\begin{equation} \label{def:ubar}
\uubar(x) \coloneqq
\begin{cases}
u_-, &x\le-1,\\
u_+, &x\ge+1.
\end{cases}
\end{equation}

We present the first theorem, which is on the \(L^2\)-contraction properties of oscillatory viscous-dispersive shock waves without any size restrictions on perturbations.
This implies not only time-asymptotic stability but also uniform stability.

\begin{theorem} \label{thm:main}
Let \(\e,\d>0\) be constants and let \(u_\pm\) be given states.
Assume that \(u_->u_+\) and 
\begin{equation} \label{cond1}
\frac{1}{4} < \frac{\d(u_--u_+)}{2\e^2} < \frac{1}{2}.
\end{equation}
Let \(\util\) be the associated non-monotonic viscous-dispersive shock of \eqref{burgers} which connects \(u_-\) and \(u_+\).
Let \(u_0\) be given initial data satisfying \(\norm{u_0-\util}_{H^1(\RR)}<+\infty\) and let \(u\in\Xcal_T\) denote the solution of \eqref{burgers} with the initial data \(u_0\).
Then, the following \(L^2\)-contraction holds:
\begin{equation} \label{L2-stable}
\begin{aligned}
&\int_\RR \big(u(t,x)-\util^{-X}(x-\s t)\big)^2 dx
+\frac{2(u_--u_+)}{M}\int_0^T |\dot{X}(t)|^2 dt\\
&\qquad +\frac{\e}{5}\int_0^T \int_\RR \big((u(t,x)-\util^{-X}(x-\s t))_x\big)^2 dx dt
\le \int_\RR \big(u_0(x)-\util(x)\big)^2 dx.
\end{aligned}
\end{equation}
for the Lipschitz shift function \(X(t)\) satisfying \(X(0)=0\),
\begin{equation} \label{X-def}
\dot{X}(t)= \frac{2M}{(u_--u_+)} \int_\RR \big(u(t,x+X(t))-\util(x-\s t)\big) \, \util'(x-\s t)dx,
\end{equation}
where \(M\ge\frac{4}{3}\) is any fixed constant.
Moreover, the following time-asymptotic stability holds:
\begin{equation} \label{timeasy}
\norm{u(t,\cdot)-\util(\cdot-X(t))}_{L^p(\RR)} \to 0 \quad \text{as } t \to \infty,
\quad \text{for all} \quad 2<p\le\infty,
\end{equation}
and
\begin{equation} \label{shiftasy}
|\dot{X}(t)| \to 0 \quad \text{as } t \to \infty.
\end{equation}
\end{theorem}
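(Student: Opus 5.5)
We follow the $a$-contraction philosophy with shifts, here with constant weight, since the statement has no weight. We work in the moving frame $\x=x-\s t$ and set $v(t,\x)\coloneqq u(t,\x+\s t+X(t))$, so that the perturbation is $w\coloneqq v-\util$.

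The first step is the energy identity. Differentiating $\frac12\int w^2\,d\x$ along \eqref{burgers} and \eqref{visS}, the dispersive term integrates out, since $\int w\,w_{\x\x\x}=0$. This yields an identity of the form
\[
\frac{d}{dt}\frac12\int w^2
= \dot X\int w\,\util'\,d\x \;-\;\dot X\cdot(\text{terms in }\util)\;-\;\frac12\int \util'\,w^2\,d\x\;-\;\e\int w_\x^2\,d\x .
\]
The cubic term $\int w^2 w_\x$ vanishes. This is presumably the identity \eqref{key1} referred to via Lemma~\ref{lem:RHS}.

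Next we insert the shift. With \eqref{X-def} we have $\dot X\int w\util' = \frac{u_--u_+}{2M}|\dot X|^2$. A Young-type splitting then produces the good term $-\frac{u_--u_+}{2M}|\dot X|^2$, at the cost of a term $\frac{2M}{u_--u_+}\big(\int w\util'\big)^2$ with a bad sign. The quantity left to control is therefore the quadratic form
\[
Q(w)\coloneqq \frac12\int \util' w^2 \;-\;\frac{M'}{u_--u_+}\Big(\int w\util'\Big)^2 \;+\;\Big(1-\frac1{10}\Big)\e\int w_\x^2 ,
\]
which we need to show is nonnegative. Here the coefficient of $\big(\int w\util'\big)^2$ carries the correct sign (it is subtracted), since the shift absorbs the bad projection. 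On the monotone part of the profile, where $\util'<0$, the term $\frac12\int\util' w^2$ is bad. On the oscillatory tail, where $\util'>0$ at places, it is good.

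The heart of the argument is proving $Q\ge 0$. The plan is a Poincaré-type inequality on the region where $\util'<0$, using the structural properties of $\util$ established earlier under \eqref{cond1}. These include an explicit comparison of $\util$ with the $\tanh$-like monotone Burgers profile on the main transition layer, together with the geometric decay rates of the local extrema toward $u_-$. We change variables $y=(\util-u_+)/(u_--u_+)$ on each monotone piece and use the sharp weighted Poincaré inequality $\int_0^1|f-\bar f|^2\le\frac12\int_0^1 y(1-y)|f'|^2$. The mean $\bar f$ is handled by the $\dot X$ term, which is where $M\ge\frac43$ enters. The remaining monotone pieces in the oscillatory tail have amplitudes that decay geometrically, so their negative contributions are summable and small relative to the diffusion, because $\frac{\d(u_--u_+)}{2\e^2}<\frac12$.

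Integrating the resulting differential inequality in time gives \eqref{L2-stable}. The main obstacle is this step, namely keeping the constants in the Poincaré estimate under control across the infinitely many oscillations.

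For the asymptotics, \eqref{L2-stable} gives $w_\x\in L^2(0,\infty;L^2)$ and $\dot X\in L^2(0,\infty)$. We also need an $H^1$ energy estimate showing that $\frac{d}{dt}\|w_\x\|^2$ is integrable; this uses the Gagliardo–Nirenberg inequality and the $L^2$ bound. These give $\|w_\x(t)\|_{L^2}\to0$. Interpolating with the bounded $L^2$ norm yields \eqref{timeasy} for $2<p\le\infty$. Finally, $|\dot X|\le C\|w\|_{L^\infty}\|\util'\|_{L^1}\to0$, which gives \eqref{shiftasy}.
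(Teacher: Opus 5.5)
Your outer framework matches the paper's. It uses the identity \eqref{key1} (note that it has no extra ``$\dot X\cdot(\text{terms in }\util)$''), trades half of the shift term for $|\dot X|^2$-dissipation, applies Lemma \ref{KV_lemma} in the variable $z=\util$ on monotone pieces, and gets the asymptotics from an $H^1$ estimate, interpolation, and $|\dot X|\le\|w\|_{L^\infty}\|\util'\|_{L^1}$. The gap is in the step you yourself call the heart of the argument.

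On a decreasing piece $(\x_i,\x_{i-1})$ ($i$ even), Lemma \ref{KV_lemma} lets diffusion control the fluctuation of $w$. The constant mode, however, needs the \emph{local} squared mean $\frac{1}{|u_i-u_{i-1}|}\big(\int_{\x_i}^{\x_{i-1}}w\util'\,d\x\big)^2$. The shift supplies only the single global quantity $\big(\int_\RR w\util'\,d\x\big)^2$. This does not dominate the means on the infinitely many decreasing pieces of the tail. It does not even dominate the mean on $(\x_0,\infty)$, because $\int_{-\infty}^{\x_0}w\util'\,d\x$ can cancel it.

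Your claim that the tail contributions are ``summable and small relative to the diffusion'' is false for exactly this mode. If $w\equiv c$ on $(\x_i,\x_{i-1})$, the diffusion there vanishes, while $-\frac12\int_{\x_i}^{\x_{i-1}}w^2\util'\,d\x=\frac{c^2}{2}(u_i-u_{i-1})>0$. Geometric decay of $|u_i-u_{i-1}|$ makes these terms summable, not controllable. Any compensation has to come from diffusion and good terms on neighbouring pieces, that is, from a quantified coupling of adjacent intervals, and your plan contains none.

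The paper supplies this coupling as follows.
\begin{itemize}
\item Transfer identity. On $J_i=(\x_i,\x^i)$, where $\util$ returns to the value $u_i$, integration by parts gives $\big(\int_{J_i}w\util'\big)^2=\big(\int_{J_i}w_\x(\util-u_i)\big)^2\le\int_{J_i}(\util-u_i)^2\,d\x\int_{J_i}w_\x^2\,d\x$. So diffusion buys a squared mean on $J_i$, at a price controlled by Proposition \ref{prop:L2} ($\lesssim|u_i-u_{i-1}|\r_*^{-i}$).
\item Induction. Combined with Cauchy--Schwarz on the good term of the adjacent increasing piece, this gives the mean on the decreasing piece with coefficient $\frac12+a$, slightly more than Lemma \ref{KV_lemma} needs. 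The surplus, $a_n=\frac1{15}2^{-n}$, is propagated by induction.
\item Quantitative bookkeeping. The constants $C_i\sim 2^i$ that this forces must be beaten by the $\r_*^{-i}$ decay. The total diffusion used across the overlapping windows must also stay below $\frac9{10}$. This is where the bounds $u_0\le1.0601s$, $\r_*=4.64$, $\r^*=4.77$ and the $\lbar_i$ are actually used; they follow from \eqref{cond1} via Theorem \ref{thm:shock} with $A=\frac12$.
\item Base case. Step 0 is itself a localization. It isolates the mean on $(\x_0,\infty)$ with coefficient $\frac7{12}$, leaving a surplus to start the induction. It does so by applying the same transfer on $(-\infty,\x_s)$, where $\int_{-\infty}^{\x_s}(\util-s)^2\le0.001s$, together with a fraction of $\Hcal_1$. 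This is where $M\ge\frac{40}{39}\cdot\frac{13}{10}=\frac43$ comes from, not from a plain Poincar\'e inequality on the main layer.
\end{itemize}

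Finally, fix the signs. The shift must be $\dot X=-\frac{2M}{u_--u_+}\int w\util'\,d\x$, which is the sign the paper's computation actually uses. Then $\dot X\int w\util'=-\frac{2M}{u_--u_+}\big(\int w\util'\big)^2$ splits into halves without any Young inequality, and the mean term enters your $Q$ with a plus sign. With the minus sign you wrote, $Q\ge0$ already fails for $w$ close to a nonzero constant on a long interval.
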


\begin{remark}
(1) The choice of the shift \eqref{X-def} is inspired by earlier works \cite{Hur,KangVasseur}.
This type of shift function was first introduced in the study of the stability of viscous shocks for the viscous Burgers equation \cite{KangVasseur}, and was later employed in the analysis of viscous-dispersive shocks of the KdVB equation, where their stability was shown via a computer-assisted proof \cite{Hur}.
As explained in detail in \cite{Hur}, the choice of such a shift could be understood from a perspective of a gradient flow designed to minimize the \(L^2\)-distance between the solution and the shock profiles.\\
(2) The definition of the shift depends on the regularity of underlying solutions.
If \(u\in\Xcal_T\), then the Cauchy--Lipschitz theorem guarantees that the shift is well-defined and, moreover, Lipschitz continuous (see \cite[Remark 1.1]{KangVasseur} and \cite[Appendix C]{KV-JDE22}).
However, to the best of our knowledge, the existence of solutions in \(\Xcal_T\) has not been established in the literature, particularly in the case of distinct asymptotic states.
For the sake of completeness, we provide its proof in Appendix \ref{sec:exist}.
The work \cite{Hur} assumes the global existence of solutions evolving from \(H^2\) data.
We also refer to a series of works \cite{MR1, MR2, MV} concerning the existence of low-regularity solutions. These works build on the ideas introduced by Bourgain in \cite{Bourgain}.
See also \cite{KPV-KDV,ST-KDV} for classical results on the KdV equation.\\
(3) The contraction estimate \eqref{L2-stable} remains valid with the numerator \(M\) in place of \(2M\) in \eqref{X-def}.
We introduce the factor \(2\) so as to obtain \(L^2\)-dissipation for the shift derivative.
This plays a crucial role in the proof of Theorem \ref{thm:limit}, which concerns the zero viscosity-dispersion limits.
In particular, it is used to establish suitable compactness for the family of shift functions.\\
(4) The shift function \(X(t)\) is shown to satisfy the time-asymptotic behavior described in \eqref{shiftasy}, which implies 
\[
\frac{X(t)}{t} \to 0 \quad \text{as } t\to\infty.
\]
In particular, the shift grows at most sublinearly in time, and the shifted shock wave asymptotically retains the original traveling wave profile.
\end{remark}

\begin{remark} \label{rmk:wlog}
Throughout this paper, we always assume that \(\d>0\).
If \(\d<0\), one can perform the change of variables \(x\to -x,\ u\to -u,\ \d\to -\d\) to obtain the same result.
Owing to the scaling \(u(\e t,\e x)\) and \(\util(\e(x-\s t))\), we may also assume \(\e=1\).
The scaling invariance is discussed in detail in Section \ref{sec:scaling}.
Lastly, from the Galilean invariant transformation \(u(t,x-\s t)+\s \to u(t,x)\), we may assume \(\s=0\).
\end{remark}

As an important application of the above result, we now state the following result on the zero viscosity-dispersion limits.
To this end, we introduce the scaled equation which is given as follows:
\begin{equation} \label{scaled-eq}
(u^\n)_t + \Big(\frac{(u^\n)^2}{2}\Big)_x = \n \e (u^\n)_{xx}- \n^2 \d (u^\n)_{xxx}, \qquad
u^\n(0,x)=u_0\Big(\frac{x}{\n}\Big).
\end{equation}
Notice that if \(u^\n(t,x)\) is a solution to \eqref{scaled-eq}, the function \(u(t,x)=u^\n(\n t,\n x)\) is a solution to \eqref{burgers}.
Moreover, the viscous-dispersive shock of the scaled equation \eqref{scaled-eq} is given by \(\util^\n(x)\coloneqq \util(x/\n)\), where \(\util(x)\) is the shock of the original equation \eqref{burgers} corresponding to the pair of parameters \((\e,\d)\).

We also need to introduce an entropy shock, also known as a Riemann shock, to the (inviscid) Burgers equation associated with the same prescribed states \(u_-\) and \(u_+\).
When \(u_->u_+\), the associated entropy shock is given by 
\begin{equation} \label{RS}
\ubar(x)=
\begin{cases}
u_- &\text{if } x-\s t<0,\\
u_+ &\text{if } x-\s t>0,
\end{cases}
\end{equation}
where \(\s\) is determined by the Rankine--Hugoniot condition \eqref{sigma}.

\vspace{2mm}
We are ready to state the next theorem. 
The first part serves an optimal convergence rate (see \cite{KangVasseur} for viscous Burgers equation). The second part ensures the existence of zero viscosity-dispersion limits under well-prepared initial data, on which the entropy shock is unique and orbitally stable.

\begin{theorem} \label{thm:limit}
Let \(\e,\d>0\) be fixed constants and let \(u_\pm\) be given states.
Assume that \(u_->u_+\) and \eqref{cond1}.
Let \(\util\) denote the associated non-monotone shock of \eqref{burgers} which connects \(u_-\) and \(u_+\) with a fixed location \(\util(0)=(u_-+u_+)/2\).\\
(1) Let \(u_0\) be any initial datum with \(u_0-\util\in H^1(\RR)\) and let \(u^\n\) be the solution to \eqref{scaled-eq} subject to the scaled initial data \(u_0(x/\n)\).
Then, the solution \(u^\n\) satisfies that for any \(t>0\),
\begin{equation} \label{pstu}
\norm{u^\n(t,\cdot)-\ubar(\cdot-\s t-Y_\n(t))}_{L^2(\RR)}
\le \norm{u_0-\ubar}_{L^2(\RR)} + C\sqrt{\n}
\end{equation}
where \(Y_\n\) is given by \(Y_\n(t)=\n X(t/\n)\) from the shift function \(X\) given in \eqref{X-def} with \(\util(x)\) and \(u_0(x)\).\\
(2) Let \(u^0\) be any initial datum with \(\Ecal_0 \coloneqq \int_\RR (u^0-\ubar)^2 dx < \infty\).
Then the following holds:\\
(i) (Well-prepared initial data) There exists a sequence of smooth functions \(\{u_0^\n\}_{\n>0}\) on \(\RR\) such that 
\begin{equation} \label{wpid}
\lim_{\n\to0} u_0^\n = u^0 \quad \text{a.e.}, \qquad
\lim_{\n\to0} \int_\RR (u_0^\n-\util^\n)^2 dx = \Ecal_0,
\end{equation}
where \(\util^\n(x)=\util(x/\n)\).\\
(ii) For any given \(T>0\) and any \(\n>0\), let \(u^\n\) be the solution in \(\Xcal_T\) to \eqref{scaled-eq} subject to the initial data \(u_0^\n\).
Then, there exists a function \(u_\infty\in L^\infty(0,T;L_{loc}^2(\RR))\) such that up to a subsequence,
\begin{equation} \label{zvdl}
u^\n \rightharpoonup u_\infty \quad \text{in } L^\infty(0,T;L_{loc}^2(\RR)), \quad \text{as } \n \to 0.
\end{equation}
(iii) There exists a function \(X_\infty\in BV(0,T)\) and a constant \(C>0\) such that for a.e. \(t\in(0,T)\),
\begin{equation} \label{zvds}
\frac{1}{2} \int_\RR \big(u_\infty(t,x)-\ubar(x-\s t-X_\infty(t))\big)^2 dx
\le C\Ecal_0.
\end{equation}
(iv) Moreover, the function \(X_\infty\) satisfies 
\begin{equation} \label{X-con}
|X_\infty(t)| \le \frac{C(T)}{u_--u_+} (\Ecal_0+\sqrt{\Ecal_0}).
\end{equation}
Therefore, entropy shocks \eqref{RS} are stable and unique in the class of zero viscosity-dispersion limits of solutions to the KdVB equation \eqref{scaled-eq}.
\end{theorem}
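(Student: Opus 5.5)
Theorem \ref{thm:limit} will be derived from the uniform contraction \eqref{L2-stable} of Theorem \ref{thm:main}, combined with the scaling \(u^\n(t,x)=u(t/\n,x/\n)\) and the elementary bound \(\norm{\util^\n-\ubar}_{L^2}^2=\n\norm{\util-\ubar}_{L^2}^2\). The proof of that last bound needs the decay of \(\util\) to \(u_\pm\) as \(\x\to\pm\infty\), which is exponential although oscillatory on the left. Throughout I use the normalization \(\util(0)=(u_-+u_+)/2\).

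\textbf{Part (1).} Undoing the scaling in \eqref{L2-stable} gives
\[
\norm{u^\n(t)-\util^\n(\cdot-\s t-Y_\n(t))}_{L^2}\le \norm{u_0(\cdot/\n)-\util^\n}_{L^2}.
\]
Here \(Y_\n(t)=\n X(t/\n)\), and both sides carry the same factor \(\sqrt\n\) from the change of variables in the \(L^2\) norm. I would then pass from \(\util^\n\) to \(\ubar\) on both sides by the triangle inequality. Each switch costs \(\sqrt\n\,\norm{\util-\ubar}_{L^2}\), and this produces \eqref{pstu}. One point needs care: the right-hand side \(\norm{u_0-\ubar}_{L^2}\) has to be read in the scaled variables, that is, with initial data \(u_0(\cdot/\n)\). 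Otherwise the constant \(C\) must absorb a factor \(\sqrt\n\), since the scaling gives \(\norm{u_0(\cdot/\n)-\ubar}_{L^2}=\sqrt\n\norm{u_0-\ubar}_{L^2}\).

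\textbf{Part (2)(i).} I would mollify \(u^0-\ubar\) at scale \(\n\), or at any scale \(\eta(\n)\to0\), to obtain smooth \(w^\n\to u^0-\ubar\) in \(L^2\) and a.e. Then I set \(u_0^\n=\ubar+w^\n+(\util^\n-\ubar)\). This is smooth, since the jump of \(\ubar\) is cancelled by \(\util^\n\). It converges a.e. to \(u^0\), because \(\util^\n\to\ubar\) pointwise for \(x\neq0\). Finally \(\int(u_0^\n-\util^\n)^2=\int (w^\n)^2\to\Ecal_0\), which gives \eqref{wpid}.

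\textbf{Part (2)(ii)--(iv).} Contraction gives uniform bounds on \([0,T]\). First,
\[
\norm{u^\n(t)-\util^\n(\cdot-\s t-Y_\n(t))}_{L^2}^2+\frac{2(u_--u_+)}{M}\int_0^T|\dot Y_\n|^2\,dt\le \int(u^\n_0-\util^\n)^2\le 2\Ecal_0
\]
for small \(\n\). The shift term keeps its form under scaling, because \(\dot Y_\n(t)=\dot X(t/\n)\). Consequently \(Y_\n\) is bounded in \(H^1(0,T)\), uniformly in \(\n\), with \(Y_\n(0)=0\). By Cauchy--Schwarz, \(|Y_\n(t)|\le\sqrt T\,\norm{\dot Y_\n}_{L^2}\le C(T)\sqrt{\Ecal_0/(u_--u_+)}\). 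Arzelà--Ascoli then gives \(Y_\n\to X_\infty\) uniformly along a subsequence, with \(X_\infty\in H^1(0,T)\subset BV(0,T)\). This yields a bound of the form \eqref{X-con}, possibly after adjusting constants.

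For (ii), \(u^\n\) is bounded in \(L^\infty(0,T;L^2_{loc})\) because \(\util^\n\) is uniformly bounded, so a weak-\(*\) subsequential limit \(u_\infty\) exists. For (iii), note that \(\util^\n(\cdot-\s t-Y_\n(t))\to\ubar(\cdot-\s t-X_\infty(t))\) strongly in \(L^2_{loc}\). Indeed, \(\norm{\util^\n-\ubar}_{L^2}=O(\sqrt\n)\), and translates of \(\ubar\) depend continuously on the shift in \(L^2_{loc}\) by \(|\mathrm{shift}|^{1/2}\). Hence \(u^\n-\util^\n(\cdots)\rightharpoonup u_\infty-\ubar(\cdots-X_\infty)\). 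Weak lower semicontinuity of the \(L^2\) norm, applied in space-time on \((t_1,t_2)\times\RR\) and followed by Lebesgue differentiation in \(t\), gives \eqref{zvds} with \(C=1\).

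\textbf{Main obstacle.} The delicate step is converting the space-time weak limit into a pointwise a.e. \(t\) estimate while the shifts move with \(t\). I would handle this by testing against \(\phi(t)\psi(x)\) and using the uniform convergence of \(Y_\n\). Everything else follows routinely from Theorem \ref{thm:main}.
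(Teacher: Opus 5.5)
Your proposal is correct. Part (1) is the paper's argument: you scale the contraction and pay $\sqrt{\nu}\,\|\tilde u-\bar u\|_{L^2}$ twice. Your caveat there is unnecessary, because what you actually prove is $\sqrt{\nu}\,\|u_0-\bar u\|_{L^2}+C\sqrt{\nu}$, and this implies \eqref{pstu} as stated for $\nu\le 1$.

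Part (2) takes a genuinely different and shorter route than the paper.

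\begin{itemize}
\item For (i), the paper truncates $u^0$, mollifies at scale $\sqrt{\nu}$ and extracts a diagonal sequence. Your choice $u_0^\nu=\tilde u^\nu+(u^0-\bar u)*\phi_{\eta(\nu)}$ gives the a.e.\ limit and the energy limit at once.
\item For the shifts, the paper only uses $|\dot X_\nu|\le 1+|\dot X_\nu|^2$. This gives BV compactness and $L^1$ convergence, but the resulting bound $|X_\nu|\le C(\mathcal{E}_0+1)$ does not vanish with $\mathcal{E}_0$. The paper therefore proves \eqref{X-con} separately: it tests the equation against a spatial cutoff $\psi$ and reads off the mass balance $(X_\nu(t)+\sigma t)(u_--u_+)$ from the terms $J_5$ and $J_8$.
\item You instead apply Cauchy--Schwarz to the $\nu$-independent $L^2_t$ dissipation of $\dot Y_\nu$. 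This gives $|Y_\nu(t)|\le C\sqrt{MT\mathcal{E}_0/(u_--u_+)}$ and equi-H\"older continuity. Hence $Y_\nu$ converges uniformly to some $X_\infty\in H^1(0,T)$, and \eqref{X-con} follows directly after absorbing $\sqrt{u_--u_+}$ into $C(T)$. Your bound also behaves better for large $\mathcal{E}_0$.
\item For (ii)--(iii), the identity $\|\bar u(\cdot-a)-\bar u(\cdot-b)\|_{L^2}=(u_--u_+)|a-b|^{1/2}$ turns uniform convergence of the shifts into strong $L^\infty_tL^2_x$ convergence of the shifted profiles. You thus avoid the truncation $\bar\varphi$ and the $J_1/J_2$ splitting. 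You also get \eqref{zvds} with the contraction constant itself, namely $\int(u_\infty-\bar u(\cdot-\sigma t-X_\infty))^2\le\mathcal{E}_0$.
\end{itemize}

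What the paper's argument buys is robustness. The cutoff-testing argument needs only a $\nu$-uniform bound on $\int_0^T|\dot X_\nu|\,dt$ plus the conservation-law structure. It therefore carries over to settings, such as the Navier--Stokes inviscid limits it is adapted from, where the shift derivative is not controlled in $L^2_t$ by the initial energy. In the present setting, the factor $2$ built into \eqref{X-def} makes your route fully sufficient.
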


A further main result regarding structural properties of the viscous-dispersive shock profiles will be presented in the next section.

\vspace{2mm}
The paper is organized as follows.
In Section \ref{sec:idea}, we introduce the main idea of the present work and state an additional main result concerning the properties of viscous-dispersive shock profiles.
This result is proved in Section \ref{sec:shock}, where we analyze the dynamics along the right-most monotonic interval as well as the other monotonic intervals.
In Section \ref{sec:main}, we obtain \(L^2\) type estimates for the shock profile and establish the \(L^2\)-contraction property under a time-dependent shift \(X(t)\).
Finally, in Section \ref{sec:limit}, we prove the zero viscosity-dispersion limits.

\section{Properties of Shock Profile and Main Ideas of Proofs} \label{sec:idea}
\setcounter{equation}{0}
In this section, we discuss the structure properties of the viscous-dispersive shock profile and present the main ideas underlying the proofs.
Note that the study of shock \(\util\) plays a crucial role in proving Theorem \ref{thm:main}.

\subsection{Structural Properties of Viscous-Dispersive Shocks}
We integrate the traveling wave equation \eqref{visS} over \((\x,\pm\infty)\) to find that
\begin{equation} \label{t00}
-\s (\util-u_\pm) + \frac{1}{2}\util^2-\frac{1}{2}u_\pm^2 =\e\util' -\d\util''.
\end{equation}
The qualitative behavior of the shock profile can be understood from the associated phase portrait, shown in Figure~\ref{pp}.

\begin{figure}[htp] \centering
    \includegraphics[width=.45\textwidth]{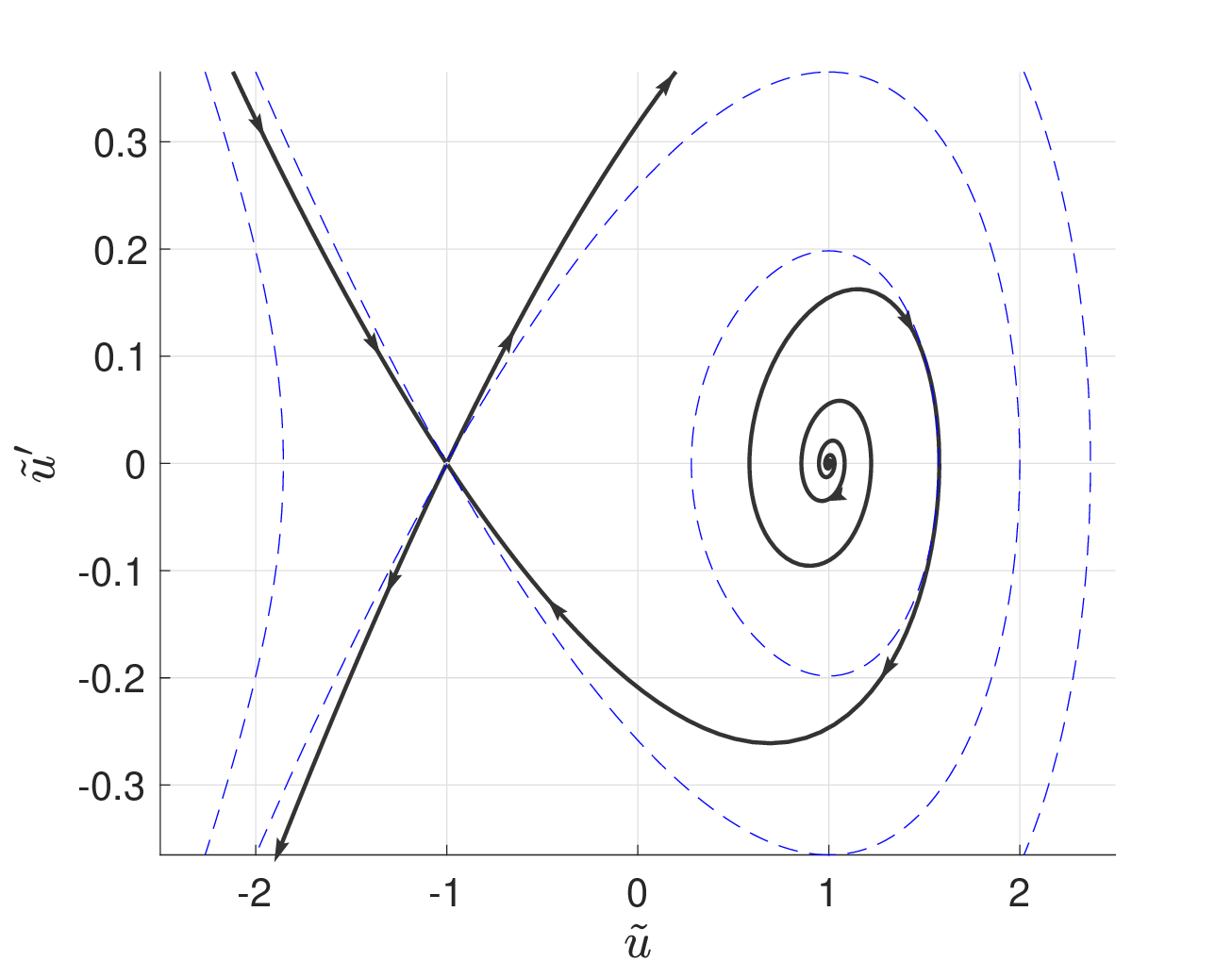}
    \includegraphics[width=.45\textwidth]{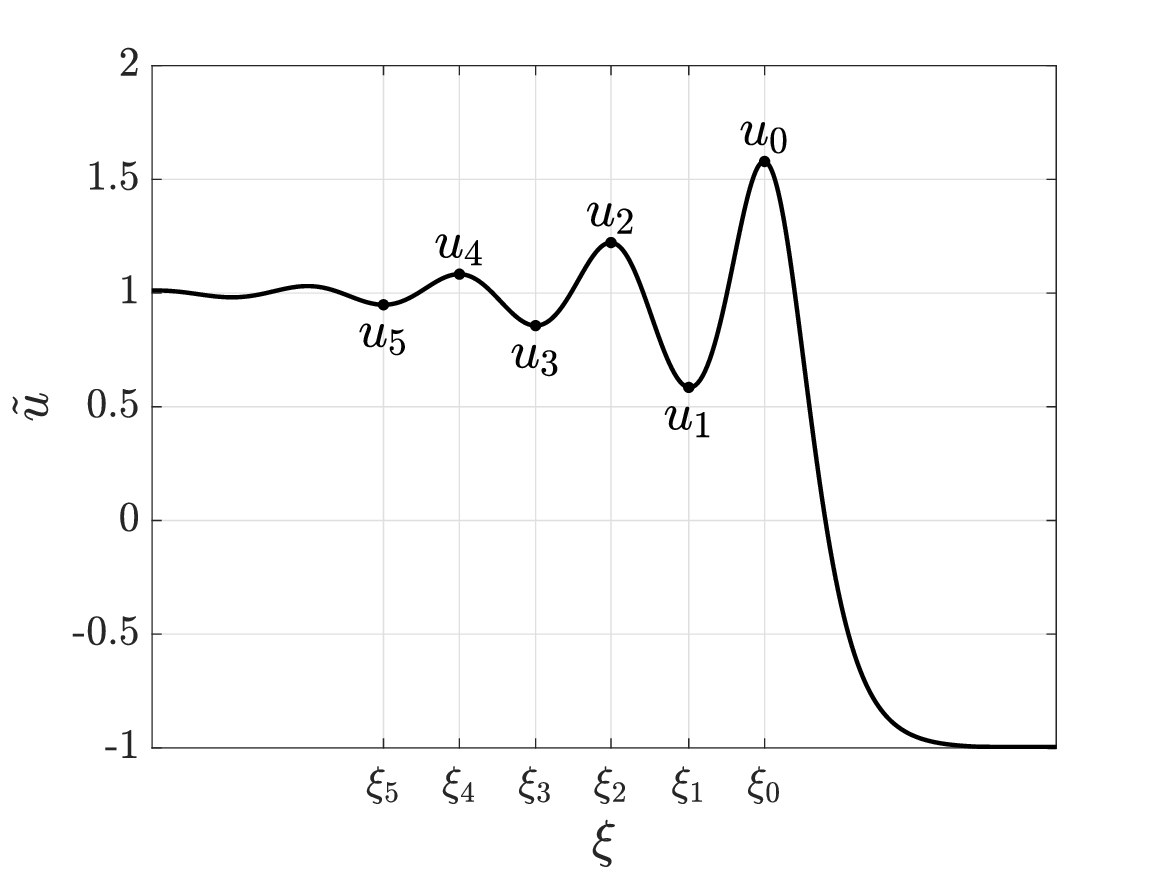}
    \caption{The left panel displays the phase portrait of solutions to \eqref{t00} with \(\d=10\),  \(\e=1\) and \(u_{\pm}=\mp 1\) in the \((\util,\util')\)-plane.
    The blue dotted curves correspond to solitary wave solutions of KdVB equation with \(\e=0\), while the black solid curve represents the heteroclinic orbit for \(\e=1\) and \(u_--u_+=2\).
    This orbit spirals from \(\util=u_-\) to \(\util=u_+\) and corresponds to the shock profile \(\util(\x)\) shown in the right panel.\label{pp}}
\end{figure}

As shown in the left panel of Figure~\ref{pp}, in the \((\util,\util')\)-plane, the heteroclinic orbit connects two steady states \((\util=u_\pm, \util'=0)\) and corresponds to the dispersive shock solution in the right panel.
This oscillating shock profile is the object of interest in the present work.

Let's first consider the linearized equation of \eqref{t00} around the equilibrium $\tilde u=u_\pm$.
At the saddle point \((\util=u_+, \util'=0)\), the eigenvalues  are given by \((\e \pm \sqrt{\e^2+2\d(u_--u_+)})/(2\d)\).
Meanwhile, at \((\util=u_-, \util'=0)\), the eigenvalues are given by 
\begin{equation} \label{L-slope}
\l_{u_-} = \frac{\e \pm \sqrt{\e^2-2\d(u_--u_+)}}{2\d}.
\end{equation}
We focus on the scenario when \(\e>0\) and \(\frac{\d(u_--u_+)}{2\e^2} >\frac{1}{4}\), i.e., the dispersion dominates the dissipation, and the dispersive shock is non-monotonic as in the right panel of Figure~\ref{pp}.
Notice that as \(\x\to -\infty\), the amplitudes of the oscillations around \(\tilde u=u_-\) for the linearized equation decay at the rate of 
\begin{equation} \label{lin-decay}
\exp \Big(-\frac{\pi}{2\delta \sqrt{\frac{\d(u_--u_+)}{2\e^2} - \frac{1}{4}}}\Big).
\end{equation}

Now we consider the traveling wave solution $\tilde u$ for the nonlinear equation \eqref{t00}.
To begin with, we introduce the following notation: we index the local extrema of \(\util(\x)\) from right to left.
The rightmost extremum is denoted by \(u_0\), and the remaining extrema are indexed sequentially as \(u_1, \ u_2, \cdots\) in the order they appear when moving leftward.
We write \(\x_i\) for the spatial location at which the value \(u_i\) is attained, so that \(\util(\x_i)=u_i\), as shown in Figure \ref{pp}.


We then present the final main result, which elucidates the structural properties of the viscous-dispersive shock with infinite oscillations.

\begin{theorem} \label{thm:shock}
Let \(\e,\d>0\) be constants and let \(u_\pm\) be given states.
Assume that \(u_->u_+\) and 
\begin{equation} \label{cond2}
\frac{1}{4} < \frac{\d(u_--u_+)}{2\e^2} < A \le 1.
\end{equation}
Then, the following holds: 
\begin{equation} \label{u0-upper}
u_0-u_- \le \frac{(u_--u_+)}{2} \frac{1}{100A} \Big(\sqrt{9+25(\sqrt{1+4A}-1)^2}-3\Big)^2
\end{equation}
which provides an upper bound for the rightmost local maximum of the dispersive shock profile \(\util\).
Moreover, the amplitudes of the oscillations \(\abs{u_i-u_-}\) decays exponentially in the sense that for any odd positive integer \(i>0\) and for each interval of $\xi$ with increasing $\util$, it holds that
\begin{equation} \label{inc-decay}
\frac{u_{i-1}-u_-}{u_--u_i} \ge \r_*>1,
\end{equation}
and for each interval of $\xi$ with decreasing $\util$, it holds that
\begin{equation} \label{dec-decay}
\frac{u_--u_i}{u_{i+1}-u_-} \ge \r^*>1,
\end{equation}
where the decay rates for selected values of \(A\) are given by

\begin{table}[h]
\centering
\begin{tabular}{c c c c c}
\hline
$A$ & $\r_*$   & $\r^*$\\
\hline
$1/3$ & 6.05 & 6.14\\
$1/2$ & 4.64 & 4.77\\
$2/3$ & 3.89 & 4.06\\
$3/4$ & 3.63 & 3.81\\
$1$ & 3.10 & 3.30\\
\hline
\end{tabular}
\caption{Decay Rates}
\label{tab:decay}
\end{table}
\end{theorem}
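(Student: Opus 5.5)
Following Remark \ref{rmk:wlog}, I would first normalize. Take \(\e=1\) and \(\s=0\), so that \(u_+=-u_-\), and set \(a\coloneqq u_--u_+\). Then write \(v\coloneqq \util-u_-\). Integrating from \(-\infty\), \eqref{t00} becomes the Newton-type second-order ODE
\[
\d v'' = v' - \tfrac12 v(v+a), \qquad v(-\infty)=0,\ \ v(+\infty)=-a,
\]
with potential \(V(v)=\frac{a}{4}v^2+\frac16 v^3\). The natural energy is \(E=\frac{\d}{2}(v')^2+V(v)\), and it satisfies \(E'=(v')^2\ge 0\). So \(E\) increases from \(0\) at \(-\infty\) to \(V(-a)=\frac{a^3}{12}\) at \(+\infty\). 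The extrema \(u_i\) are exactly the points where \(v'=0\), so \(E(\x_i)=V(v_i)\). It is convenient to rescale \(v=a w\) and \(\x=\sqrt{\d/a}\,s\). The equation then depends on the single parameter \(\kappa=\sqrt{\d a}/\d\cdot\)(const), which is tied to \(\frac{\d a}{2}\in(\frac14,A)\). Everything reduces to estimates for \(w''=\mu w'-\frac12w(w+1)\) with \(\mu^{-2}\) proportional to \(\frac{\d a}{2}\).

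\textbf{Bound \eqref{u0-upper}.} On the rightmost monotone interval \((\x_0,\infty)\), \(\util\) decreases from \(u_0\) to \(u_+\). Monotonicity lets me write \(p=v'\) as a function of \(v\). The relation \(\frac{d}{dv}\big(\frac{\d}{2}p^2+V\big)=p\) then gives a first-order ODE for \(p(v)\) on \([-a,v_0]\), with \(p<0\), \(p(-a)=p(v_0)=0\). Integrating the energy identity over this interval gives
\[
V(-a)-V(v_0)=\int_{-a}^{v_0}|p|\,dv .
\]
I would bound \(|p|\) from below by a comparison function \(q(v)\) built as a subsolution. The natural choice is a linear-in-\(v\) slope near the saddle \(u_+\), matching the unstable eigenvalue \((1+\sqrt{1+2\d a})/(2\d)\), glued to a parabola in \(v\). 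Inserting this into the identity yields a quadratic inequality in \(v_0\). Solving it produces the \(\sqrt{9+25(\sqrt{1+4A}-1)^2}-3\) form, and the monotonicity in the parameter lets me replace \(\d a/2\) by \(A\). The constants 9 and 25 suggest the comparison uses a specific splitting (for instance a Cauchy–Schwarz weight \(3/5\)); I would tune it to reproduce the stated formula.

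\textbf{Decay \eqref{inc-decay}--\eqref{dec-decay}.} On each monotone interval between consecutive extrema \(v_{i}\) and \(v_{i\pm1}\), the energy gain is
\[
V(v_{i-1})-V(v_i)=\int_{\text{interval}}|p|\,dv .
\]
I would bound this gain from below using a lower bound on \(|p|\) derived from the same phase-plane ODE. The key ingredient is a comparison with the undamped (\(\e=0\)) orbit, whose energy is \(\ge V(v_i)\); this gives something like \(|p|\gtrsim\sqrt{(V(v)-V(v_i))/\d}\). Since \(V\) is nearly quadratic for small \(v\), the resulting inequality relates \(v_{i-1}^2\) and \(v_i^2\). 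The cubic term in \(V\) is handled using the bound on \(u_0\), which ensures \(|v|\le v_0\), which is small relative to \(a\). This gives a ratio \(\r\) depending only on the parameter, evaluated at its worst case \(A\). The asymmetry between increasing and decreasing intervals comes from the sign of \(v^3\), and explains why \(\r_*\ne\r^*\). The table values would then be obtained by evaluating the resulting explicit expressions, and I would check that they exceed the linear rate \eqref{lin-decay} lower bound appropriately.

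\textbf{Main obstacle.} The hard step is making the lower bound on \(\int|p|\,dv\) sharp enough to get explicit ratios greater than one uniformly up to \(\d a/2=A\le1\). The crude undamped comparison loses the damping, while the linear rate is only asymptotic. I would first try a two-sided comparison: an upper bound \(|p|\le\sqrt{2(E_{\max}-V)/\d}\) together with a lower bound from energy monotonicity. I would iterate these once to close a self-consistent inequality for the ratio \(v_{i-1}/v_i\).
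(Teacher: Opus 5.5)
There is a genuine gap. Neither of your two lower bounds on \(|\util'|\) can deliver the stated constants, and the first one is not valid as written. Work in the normalization \(\e=1\), \(u_\pm=\mp s\) (your \(a=2s\)).

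\textbf{The bound on \(u_0\).} On \((\x_0,\infty)\) the orbit enters the saddle \(u_+\) along its \emph{stable} manifold, so \(\util'/(\util+s)\to-\frac{2s}{1+\sqrt{1+4\d s}}\). A comparison curve with the unstable slope \(\frac{1+\sqrt{1+4\d s}}{2\d}\) therefore lies above \(|\util'|\) near \(u_+\), and is not a lower bound.

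More seriously, at the turning point the true orbit satisfies \(|\util'|\approx\sqrt{(u_0^2-s^2)/\d}\,(u_0-\util)^{1/2}\). Any parabola, or any smooth lower bound vanishing linearly at \(u_0\), discards this. Even granting the best uniformly admissible parabola \(\frac{2s}{1+\sqrt{1+4A}}\frac{(u_0-\util)(\util+s)}{u_0+s}\), the energy identity only gives \(u_0\le\big(2-\frac{2}{1+\sqrt{1+4A}}\big)s\). For \(A=\frac12\) this is about \(1.27s\), versus \(1.0601s\).

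No tuning of a splitting weight recovers \eqref{u0-upper}. Write \(u_0=(1+\a)s\); the paper's final inequality is \(1-\a\ge\frac{3\sqrt{\a}}{5\sqrt{A}}+\frac{2}{1+\sqrt{1+4A}}\). Its coefficient \(\frac35=\frac85-1\) comes from \(\int_{-s}^{u_0}(u_0-z)^{1/2}(z+s)\,dz=\frac{4}{15}(u_0+s)^{5/2}\). That integral arises in the comparison \eqref{RM-ineq_2}, whose \(\l,\m\) are fitted to stay below the exact asymptotics at \emph{both} endpoints.

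You also give no mechanism for proving such a bound on the whole interval. The slope \(d\util'/d\util\) is singular at both ends, and the comparison curve is not a global subsolution. The paper assumes a violation and extracts two crossing points \(b<c\) where \(h'-p'\) has opposite signs. It then builds an explicit auxiliary function \(g\) that carries the sign of \(h'-p'\) at crossings. This \(g\) has sign pattern \(-,+,-,+\) at \(-s,b,c,u_0\) yet changes convexity at most once, which is a contradiction. This step also needs the preliminary bootstrap \(u_0\le\frac54 s\) from Lemma \ref{lem:RM}, so that \(\m>0\).

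\textbf{The decay rates.} Energy monotonicity gives \(\frac{\d}{2}(\util')^2\ge V(v_i)-V(v)\); your expression has the sign reversed. This is informative only where \(V(v)\le V(v_i)\), roughly on \([u_i,2s-u_i]\), which is a fraction \(2/(1+\r)\) of the monotone interval. Inserting it into the energy identity gives, up to \(O(\a_0)\) corrections, \(\r^2-1\ge\pi/\sqrt{\d s}>\pi/\sqrt{A}\). So at best you get about \(2.33\) for \(A=\frac12\) and about \(2.04\) for \(A=1\), far below \(4.64\) and \(3.10\). An upper bound on \(|\util'|\) only bounds the energy gain from above, so your proposed iteration cannot close this gap.

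The missing idea is a lower bound over the \emph{entire} monotone interval: \(\util'\ge\l_i(u_{i-1}-\util)^{1/2}(\util-u_i)^{1/2}\) with \(\l_i^2=\frac{s(s^2-u_i^2)}{A(u_{i-1}-u_i)}\). This \(\l_i\) is chosen so the curve lies under the exact square-root singularities at both turning points, and the bound is proved by the same crossing and concavity argument. Its area is \(\frac{\pi}{8}\l_i(u_{i-1}-u_i)^2\). Combined with \(u_{i-1}\le(1+\a_0)s\) from \eqref{u0-upper}, it yields \eqref{inc-ui}, whose threshold is the tabulated \(\r_*\).

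On decreasing intervals you additionally need a bootstrap in \(k\) to justify the endpoint comparisons. The paper uses \(k=\frac{11}{12}\) first, then \(k=1\) once \(\r\ge3\).
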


In view of Remark \ref{rmk:wlog}, we may assume that \(\e=1\) and \(\s=0\).
Indeed, the structural properties stated in Theorem \ref{thm:shock} concern the relative heights of local extrema measured with respect to the left end state, and are therefore independent of both scaling and Galilean invariant transformation.

\subsection{Idea for the Proof of Theorem \ref{thm:shock}}
In what follows, we restrict our attention to the case \(\e=1\) and \(\s=0\), without loss of generality.
Accordingly, the shock is stationary, and the asymptotic states are given by
\begin{equation} \label{stationary}
u_\pm = \lim_{\x \to \pm \infty} \util(\x)  \eqqcolon \mp s.
\end{equation}
Then, the condition \eqref{cond2} can be rewritten as \(\frac{1}{4}<\d s<A\le1\), and the dispersive shock satisfies 
\begin{equation} \label{t0}
\util' -\d\util'' = \frac{1}{2}(\util^2-s^2).
\end{equation}
and it is known by \cite[Theorem 4]{bona} that
\begin{equation} \label{t0_2}
-s<\util<2s.
\end{equation}

We now introduce an effective energy as follows:
\begin{equation} \label{t1}
E(\x)\coloneqq
\frac{1}{6}(\util(\x)+s)^2(\util(\x)-2s)
+\frac{\d}{2}(\util'(\x))^2
\in \Big(-\frac{2}{3}s^3,0\Big).
\end{equation}
Then, using \eqref{t0}, we find that
\begin{equation} \label{t2}
E'(\x)=\util'(\x) \Big(\frac{1}{2}(\util^2(\x)-s^2)+\d\util''(\x)\Big)=(\util'(\x))^2 \ge 0,
\end{equation}
which implies that \(E(\x)\) is monotonically increasing from \(E(-\infty)=-\frac{2}{3}s^3\) to \(E(+\infty)=0\).

\vspace{2mm}
The key idea is to approximate the derivative of the shock profile on \((\x_0,\infty)\).
More precisely, we establish the following form of inequality (see Proposition \ref{prop:RM}):
\begin{equation} \label{utpf}
-\util'(\x) \ge f(\util(\x))
\end{equation}
for some nonnegative algebraic function \(f\ge0\).
This type of inequality is crucial for us to obtain a sharp estimate on the decay of energy between two adjacent extreme points $u_{i+1}$ and  $u_{i}$. 
More precisely, for example, in the rightmost monotonic interval of $\xi$, \eqref{utpf} together with \eqref{t2} implies
\begin{align*}
\frac{1}{6}(u_0+s)^2(2s-u_0)
&=E(+\infty)-E(\x_0)\\
&=\int_{\x_0}^\infty (\util'(\x))^2 d\x
\ge \int_{\x_0}^\infty f(\util(\x))(-\util'(\x)) d\x
=\int_{-s}^{u_0} f(\util) d\util.
\end{align*}
Since \(f\) is an algebraic function, the integration on the right-hand side can in principle be carried out explicitly.
Consequently, the inequality above reduces to an algebraic inequality in \(u_0\), and the constraint imposed by this inequality enables us to derive an upper bound on \(u_0\) of the form \eqref{u0-upper}.
To obtain a sharper estimate for \(u_0\), we also employ a bootstrap argument.

\vspace{2mm}
The proofs of \eqref{inc-decay} and \eqref{dec-decay} are based on the same idea.
We estimate the derivative of the shock on each interval \((\x_i,\x_{i-1})\) as follows (see Proposition \ref{prop:inc} and \ref{prop:dec}): 
\begin{equation} \label{utpg}
\abs{\util'(\x)} \ge f(\util(\x))
\end{equation}
for some nonnegative algebraic function \(f\ge0\).
This together with \eqref{t2} yields that
\begin{align*}
&\frac{1}{6}(u_{i-1}+s)^2(u_{i-1}-2s)-\frac{1}{6}(u_i+s)^2(u_i-2s)
=E(\x_{i-1})-E(\x_i)\\
&\qquad\qquad\qquad
=\int_{\x_i}^{\x_{i-1}} (\util'(\x))^2 d\x
\ge \abs{\int_{\x_i}^{\x_{i-1}} f(\util(\x)) \abs{\util'(\x)} d\x}
=\abs{\int_{u_i}^{u_{i-1}} f(\util) d\util}.
\end{align*}
This can be reduced to an algebraic inequality in \(u_i\) and \(u_{i-1}\), from which the decay rates follow.

\begin{remark} \label{rmk:accuracy}
A key component of the strategy described above is the derivation of precise estimates for the derivative of the shock---namely, the sharper the estimate, the more accurately it captures the actual structure of the profile.
In particular, by examining \eqref{u0-upper} near the threshold of the non-monotonicity, i.e., \(A=\frac{1}{4}\), the accuracy of our approximation \(f\) in \eqref{utpf} can be indirectly evaluated as follows.
It is intuitively natural that \(u_0\to s\) as \(A\to\frac{1}{4}\).
On the other hand, \eqref{u0-upper} shows that 
\[
u_0 \le
\lim_{A\to\frac{1}{4}} \bigg(1+\frac{1}{100A} \Big(\sqrt{9+25(\sqrt{1+4A}-1)^2}-3\Big)^2\bigg)s
\approx 1.0166s
\]
which indicates that we have identified a highly accurate approximation \(f\) in Proposition \ref{prop:RM}.
\end{remark}

To establish the above estimates \eqref{utpf} and \eqref{utpg} for the derivative of the shock, we introduce a contradiction argument, which can be described as follows.
Since each inequality is considered on an interval where the shock profile is monotone---such as \((\x_i,\x_{i-1})\) or \((\x_0,+\infty)\)---we may parametrize both sides of the inequality in terms of the shock profile itself.
Restating the inequalities under this parametrization, we may rewrite them in the following form: 
\[
h(\util) \le p(\util) \quad \forall \, \util\in [u_L,u_R].
\]
Then, the following observations will be required:
\begin{align*}
&h(u_L)=p(u_L),
&&h'(u_L)<p'(u_L)<0,\\
&h(u_R)=p(u_R),
&&h'(u_R)>p'(u_R)>0.
\end{align*}
Assuming the contrary, the above observations imply that there exist two points \(b\) and \(c\) such that \(u_L<b<c<u_R\) and 
\begin{align*}
&h(b)=p(b), 
&&h'(b)-p'(b) \ge 0,\\
&h(c)=p(c),
&&h'(c)-p'(c) \le 0.
\end{align*}
This together with the structures of \(h\) and \(p\) allows us to define a function \(g\colon[u_L,u_R]\to\RR\) satisfying 
\[
g(u_L)<0, \qquad
g(b)\ge0, \qquad
g(c)\le0, \qquad
g(u_R)>0,
\]
which can be shown to be convex (or concave).
This leads to a contraction.

In Section \ref{sec:shock}, we repeatedly employ the above contradiction argument to characterize the shock properties.
Although the technical details vary depending on the interval under consideration, the underlying idea is robust and can be applied on any interval where the shock profile is monotone.

\subsection{Idea for the Proof of Theorem \ref{thm:main}} \label{sec:main3}
The study of the \(L^2\)-contraction starts from the following lemma on the time derivative of the \(L^2\)-distance between the shifted solution \(u^X\) and the dispersive shock \(\util\), which can be written as
\[
\frac{d}{dt}\int_\RR (u^X-\tilde u)^2 d\x
=\frac{d}{dt}\int_\RR (u-\tilde u^{-X})^2 d\x
\]
where \(u^X(t,x) \coloneqq u(t,x+X(t))\) and \(\util^{-X}(t,x) \coloneqq \util(x-X(t))\).

\begin{lemma} \label{lem:RHS}
Let \(\util\) be the viscous-dispersive shock given by \eqref{visS} with \eqref{stationary}.
Then, for any solution \(u\in\Xcal_T\) to \eqref{burgers} and any Lipschitz function \(X\colon[0,T]\to\RR\), the following holds.
\begin{equation} \label{key1}
\frac{d}{dt}\frac{1}{2}\int_\RR (u^X-\tilde u)^2 d\x
=\dot{X}(t) \int_\RR (u^X-\util)\util'd\x
-\frac{1}{2}\int_\RR (u^X-\util)^2 \util' d\x
- \int_\RR ((u^X-\util)_\x)^2 d\x.
\end{equation}
\end{lemma}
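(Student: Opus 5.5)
The plan is a direct energy computation in the moving frame, under the normalizations of Remark \ref{rmk:wlog}: \(\e=1\), \(\s=0\), and \(u_\pm=\mp s\), so the shock \(\util\) is stationary and satisfies \eqref{t0}.

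\emph{Equation for the shifted solution.} Set \(u^X(t,\x)=u(t,\x+X(t))\). Then
\[
u^X_t=\dot X\,u^X_\x+(\text{equation terms}),
\]
so that
\[
u^X_t+\Big(\tfrac{(u^X)^2}{2}\Big)_\x-\dot X\,u^X_\x=u^X_{\x\x}-\d u^X_{\x\x\x}.
\]
Put \(w=u^X-\util\). Subtracting the stationary profile equation \((\tfrac12\util^2)'=\util''-\d\util'''\) gives
\[
w_t+\Big(\tfrac{(u^X)^2-\util^2}{2}\Big)_\x-\dot X\,(w_\x+\util')=w_{\x\x}-\d w_{\x\x\x}.
\]

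\emph{Multiply by \(w\) and integrate over \(\RR\).} The terms are handled one at a time.

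\textbf{Dispersive term.} We have \(\int w\,w_{\x\x\x}\,d\x=-\int w_\x w_{\x\x}\,d\x=0\), so it drops out.

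\textbf{Viscous term.} It yields \(-\int w_\x^2\,d\x\).

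\textbf{Shift terms.} The piece \(\dot X\int w w_\x\,d\x\) vanishes, leaving
\[
\dot X\int w\,\util'\,d\x .
\]

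\textbf{Flux term.} Write \(\tfrac12\big((u^X)^2-\util^2\big)=\util w+\tfrac12 w^2\). Then
\[
\int w\,(\util w+\tfrac12 w^2)_\x\,d\x
=-\int w_\x\big(\util w+\tfrac12 w^2\big)\,d\x .
\]
The \(w^2w_\x\) part is an exact derivative and integrates to zero. For the remaining part, \(-\int \util\,w w_\x\,d\x=\tfrac12\int \util'\,w^2\,d\x\). Moving this to the right-hand side produces the term \(-\tfrac12\int w^2\util'\,d\x\).

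Collecting everything gives exactly \eqref{key1}.

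\emph{Justification.} This is the only delicate step. We have \(w\in C([0,T];H^1)\cap L^2(0,T;H^2)\), while the profile derivatives \(\util',\util'',\util'''\) decay exponentially and so belong to \(L^2\cap L^\infty\). Hence every integral above is finite.

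\textbf{Third-order term.} The equation gives \(w_t\in L^2(0,T;H^{-1})\), so \(\frac{d}{dt}\tfrac12\|w\|_{L^2}^2=\langle w_t,w\rangle\) in the Lions--Magenes sense. Since \(w\in L^2(H^2)\), the pairing \(\langle w_{\x\x\x},w\rangle=-\int w_{\x\x}w_\x\,d\x=0\) is legitimate.

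\textbf{Cubic flux term.} This term lies in \(L^1\) because \(w\in L^\infty\) by the Sobolev embedding.

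\textbf{Time regularity.} Since \(X\) is Lipschitz, \(\dot X\) exists almost everywhere. The identity therefore holds for a.e.\ \(t\), equivalently in integrated form. If needed, one can approximate by smooth solutions (mollify the data) and pass to the limit.
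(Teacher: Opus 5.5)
Your proposal is correct and follows essentially the same route as the paper's proof in Appendix \ref{section_proof}. Both derive an energy identity by multiplying the perturbation equation by the perturbation and integrating by parts term by term: the dispersive term vanishes, the flux term yields \(-\frac12\int w^2\util'\,d\x\), and the viscous term gives \(-\int w_\x^2\,d\x\). The only cosmetic difference is the frame: the paper computes with \(u-\util^{-X}\) in the fixed frame, so the Lipschitz shift enters only through the smooth profile via \(\partial_t\util^{-X}=-\dot X(\util')^{-X}\), whereas you work with \(u^X\) in the moving frame, which is equivalent by translation invariance of the integrals; your Lions--Magenes remarks also supply regularity details the paper leaves implicit.
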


This lemma can be proved in the same way as \cite{KangVasseur}; see also \cite{Hur}.
To make the paper self-contained, we include the proof of Lemma \ref{lem:RHS} in Appendix \ref{section_proof}.

\vspace{2mm}
The need for a time-dependent shift \(X(t)\) can be justified as follows.
Consider a perturbation \(w\coloneqq u-\util\) that takes a large constant value one a large but compact set---containing, in particular, the transition zone from the rightmost local extremum to the right-end state---and decays slowly to zero outside of this set.
Then the dissipation term \(\int_\RR (w_\x)^2 d\x\) can be negligible, whereas the perturbation energy term \(\int_\RR w^2 \util' d\x\) is large; this leads to a failure of the contraction property.

\vspace{2mm}
To illustrate the strategy of the proof, we begin by outlining the argument for the simpler cases, namely, the viscous Burgers equation case and the KdVB equation but monotone shock case.
Then, we compare them with the oscillatory shock case in order to highlight the additional difficulties that arise in our problem of interest.

\paragraph{Case 1: \(\d=0\).}
This corresponds to the viscous Burgers equation, which admits smooth monotone viscous shock profiles.
An \(L^2\)-contraction property, analogous to Theorem \ref{thm:main}, can be established by combining Lemma \ref{lem:RHS} with the following Poincar\'e-type inequality.
\begin{lemma}\label{KV_lemma} \cite[Lemma 2.9]{acon_i_2}
For any function \(f\colon[a,b]\to\RR\) satisfying \(\int_a^b (y-a)(b-y)\abs{f'}^2 dy<\infty\),
\begin{equation} \label{KV}
\int_a^b f^2dy
\le \frac{1}{2}\int_a^b(y-a)(b-y)\abs{f'}^2 dy
+\frac{1}{b-a} \Big(\int_a^b f dy\Big)^2.
\end{equation}
\end{lemma}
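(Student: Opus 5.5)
The plan is to prove Lemma \ref{KV_lemma} by reducing to the interval \([0,1]\) and expanding in the eigenfunctions of the weighted Sturm--Liouville operator \(-\frac{d}{dy}\big(y(1-y)\frac{d}{dy}\big)\), which are the shifted Legendre polynomials. The change of variables \(y=a+(b-a)z\) is the first step. Writing \(g(z)=f(a+(b-a)z)\), the weight \((y-a)(b-y)\) becomes \((b-a)^2z(1-z)\) and \(|f'|^2=|g'|^2/(b-a)^2\). After dividing by \((b-a)\), the claim therefore becomes
\[
\int_0^1 g^2\,dz \le \frac12\int_0^1 z(1-z)|g'|^2\,dz + \Big(\int_0^1 g\,dz\Big)^2 .
\]

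Next, let \(P_n(z)\) denote the shifted Legendre polynomials on \([0,1]\), normalized so that \(\int_0^1 P_n^2=1\). They form an orthonormal basis of \(L^2(0,1)\) and satisfy \(-(z(1-z)P_n')'=n(n+1)P_n\). Expand \(g=\sum c_nP_n\), so that \(c_0=\int_0^1 g\). Integration by parts, where the boundary terms vanish because the weight \(z(1-z)\) vanishes at the endpoints, gives
\[
\int_0^1 z(1-z)|g'|^2 = \sum_n n(n+1)c_n^2 .
\]
Hence
\[
\int_0^1 g^2 - c_0^2=\sum_{n\ge1}c_n^2\le \frac12\sum_{n\ge1} n(n+1)c_n^2,
\]
since \(n(n+1)\ge2\) for \(n\ge1\). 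This is exactly the claimed inequality, and the constant \(\frac12\) corresponds to the first nonzero eigenvalue, namely \(2\).

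The only delicate point is justifying this identity for a general \(g\) satisfying merely \(\int z(1-z)|g'|^2<\infty\), which need not lie in \(H^1\) up to the endpoints. I would first prove the inequality for polynomials, or for smooth functions on \([0,1]\), where the computation above is rigorous. I would then use density to pass to the general case. To do so, I will show that the weighted space \(\{g\in L^2 : \int z(1-z)|g'|^2<\infty\}\) is the form domain of the Legendre operator. I will check that it embeds into \(L^2\), for example by the one-dimensional Hardy-type estimate near each endpoint, and that smooth functions are dense in the form norm. The density can be achieved by truncating \(g\) near \(0\) and \(1\) with logarithmic cutoffs, whose weighted energy tends to zero. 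I expect this approximation step to be the main, if routine, obstacle; the spectral inequality itself is immediate.
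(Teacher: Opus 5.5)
Your proposal is correct and takes the same spectral route the paper points to when it cites \cite[Lemma 2.9]{acon_i_2}: rescale, expand in Legendre polynomials, and use that the first nonzero eigenvalue of $-\partial_z(z(1-z)\partial_z)$ is $2$, with the mean term accounting for the constant kernel. Your eigenvalues $n(n+1)$ are the correct ones (the paper's ``$n(n+2)$'' is a slip), and the density step is easier than you expect: $g(\max\{\epsilon,\min\{z,1-\epsilon\}\})$ already gives $H^1$ approximants converging in the weighted norm, since your logarithmic bound $|g(z)|^2\lesssim 1+\log(1/z)$ gives $\epsilon\,|g(\epsilon)|^2\to 0$.
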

By exploiting the localization effect of the derivative of the viscous shock, one may introduce a change of variables that maps \(\x\) to a bounded domain.
We emphasize that inequality \eqref{KV} is optimal, and the constant $\frac{1}{2}$ is independent of the length of the interval $[a,b]$ (see the proof of \cite[Lemma 2.9]{acon_i_2}). 
The first term on the right-hand side of \eqref{KV} is precisely the quadratic form associated with the Legendre operator  $-\partial_y((y-a)(b-y)\partial_y)$ whose eigenvalues are explicitly given by $n(n+2)$, $n\ge 0$. In particular, the only non-positive eigenvalue is zero.
Thus, the only non-dissipative direction corresponds to the kernel (the constant mode). This mode is exactly captured by the second term on the right-hand side of \eqref{KV}, namely the projection onto the kernel. In our framework, this component is controlled through the time-dependent shift (or temporal modulation), which plays the same role as the first term on the right-hand side of \eqref{key1}.

\vspace{2mm}
The \(L^2\)-contraction property follows from the above lemma.
This approach was carried out in \cite{KangVasseur}, and its extension to the multi-D case was established in \cite{Oh}.
This idea has been successfully extended to systems of viscous conservation laws.
In particular, contraction properties for viscous shocks under any large perturbations have been obtained for a variety of physical systems, including the isentropic Navier--Stokes system \cite{acon_i_2,KV}, the isothermal Navier--Stokes system \cite{EEKO-ISO}, and the Brenner--Navier--Stokes--Fourier system \cite{EEK-BNSF}.
Moreover, this approach has proved powerful enough to address challenging problems concerning the inviscid limits from the Navier--Stokes equations to the Euler equations; see, for instance, \cite{CKV,KV}.
On the other hand, the above lemma also provides a fundamental tool for time-asymptotic stability of composite wave patterns beyond (single) viscous shock waves.
More specifically, stability results for composite waves consisting of viscous shocks and rarefaction waves have been obtained for the barotropic Navier--Stokes system \cite{KVW-ADV}, the compressible Navier--Stokes--Fourier system \cite{KVW-ARMA}, and nonconvex viscous conservation laws \cite{HWZ-MA}.
It is worth noting that the composite waves in \cite{KVW-ARMA} also contain contact discontinuities, while the shocks considered in \cite{HWZ-MA} are degenerate Oleinik shocks.

\paragraph{Case 2: \(0<\d s\le \frac{1}{4}\).}
In this case, the dispersive shock profile \(\util(\x)\) of KdVB equation \eqref{burgers} is globally decreasing, which resembles the shape of the viscous shock to the viscous Burgers equation: 
\[
u_t + f(u)_x = u_{xx}.
\]
Thus, the same \(L^2\)-contraction property, as in Theorem \ref{thm:main}, can be established similarly.
To focus on the non-monotonic case, we leave it to another paper.
We also refer to \cite{EKK-NSK} for contraction estimates of monotone viscous-dispersive shocks to the Naiver--Stokes--Korteweg system.

\vspace{2mm}
Before discussing the main new idea for the non-monotonic shock profile case, we briefly explain how the \(L^2\)-contraction is obtained in Case 1 and 2.
When \(\d=0\), choosing the shift \(X(t)\) as 
\[
\dot{X}(t) = - K\int_\RR (u^X-\util)\util'd\x
\]
for some constant \(K>0\), and using the change of variables 
\begin{equation} \label{ztu}
z(\x) \coloneqq \util(\x), \qquad
dz = \util' d\x,
\end{equation}
with \eqref{stationary} and \eqref{t0}, we rewrite the right-hand side of \eqref{key1} as 
\begin{equation} \label{L2dis}
-K \Big(\int_{-s}^s w dz\Big)^2
+\frac{1}{2} \int_{-s}^s w^2dz
-\frac{1}{2} \int_{-s}^s (s-z)(z+s)(w_z)^2 dz
\end{equation}
with \(w\coloneqq u^X-\util\).
Then, choosing \(K>\frac{1}{4s}\), we obtain the desired \(L^2\)-contraction via Lemma \ref{KV_lemma}.

\vspace{2mm}
When \(0<\d s\le\frac{1}{4}\), since the shock profile \(\util\) is monotonically decreasing, \(dz=\util' d\x\) is globally well-defined as well.
However, unfortunately, the time evolution of the \(L^2\)-distance cannot be written in the form of \eqref{L2dis}.
To still apply Lemma \ref{KV_lemma}, we prove the following inequality:
\begin{equation} \label{ineq_00}
-\util'(\x) \ge \l(s-\util(\x))(\util(\x)+s), \quad \forall\, \x\in\RR,
\end{equation}
for some \(\l\ge\frac{1}{4}\).
This inequality is nontrivial, but it can still be proved using our new contradiction argument explained above.
To be specific, this can be achieved by examining a heteroclinic orbit on the \((\util,\util')\)-plane using 
\begin{equation} \label{orbit}
\d \frac{d\util'}{d\util} = 1 - \frac{1}{2\util'}(\util-s)(\util+s).
\end{equation}
This can be obtain directly from \eqref{t0}.

\paragraph{Case 3: $\delta s>\frac{1}{4}$.}
In this much more interesting case, the dispersive shock includes infinitely many oscillations.
Note that the second term on the right-hand side of \eqref{key1} is now negative on intervals where the dispersive shock is increasing, and positive on intervals where it is decreasing.
Moreover, the transformation \eqref{ztu} is not globally well defined on \(\RR\).
If one applies the transformation \eqref{ztu} on each interval of $\xi$ with monotonic $\util$ (sometime just call it monotonic interval), as we do here, the first difficulty is to establish an inequality analogous to \eqref{ineq_00} in this interval.
On the rightmost monotonic interval, where \(\x \in (\x_0,\infty)\), we show
\begin{equation} \label{ineq_01}
-\util'(\x) \ge \l (u_0-\util(\x))(\util(\x)+s), \quad \forall\, \x\in(\x_0,\infty),
\end{equation}
for some \(\l>\frac{1}{4}\) (see Proposition \ref{prop:key}), whose proof relies on the upper bound for \(u_0\) established in Theorem \ref{thm:shock}.
This is a key inequality for the future step of applying the Poincar\'e-type inequality, i.e., Lemma \ref{KV_lemma}.
One remark is that, although the right-hand side of \eqref{ineq_01} is an algebraic function (and thus could serve as the function \(f\) in \eqref{utpf}), it is not very useful, since we instead employ a much sharper approximation, given by 
\begin{equation} \label{ineq_02}
-\util'(\x) \ge \l \sqrt{s} (u_0-\util(\x))^\frac{1}{2}(\util(\x)+s)
+\m(u_0-\util(\x))(\util(\x)+s), \quad \forall\, \x\in(\x_0,\infty),
\end{equation}
for some positive constants \(\l\) and \(\m\) depending on \(A\), which is far more accurate near \(u_0\).
Here, the right-hand side of \eqref{ineq_02} is carefully chosen to fit the heteroclinic orbit, see Figure \ref{het}.

\begin{figure}[htp] \centering
\includegraphics[width=.4\textwidth]{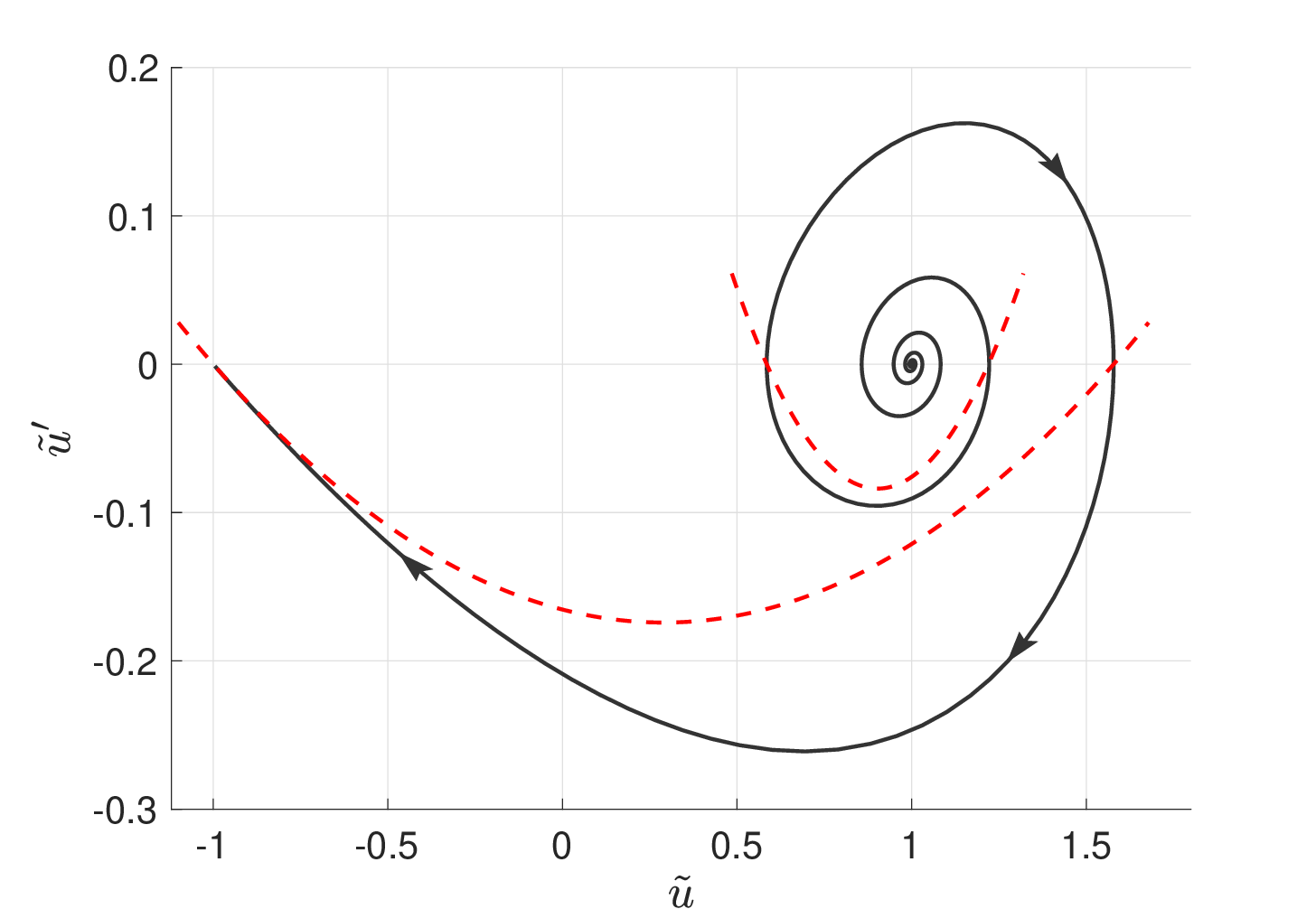}
    \qquad
\includegraphics[width=.4\textwidth]
    {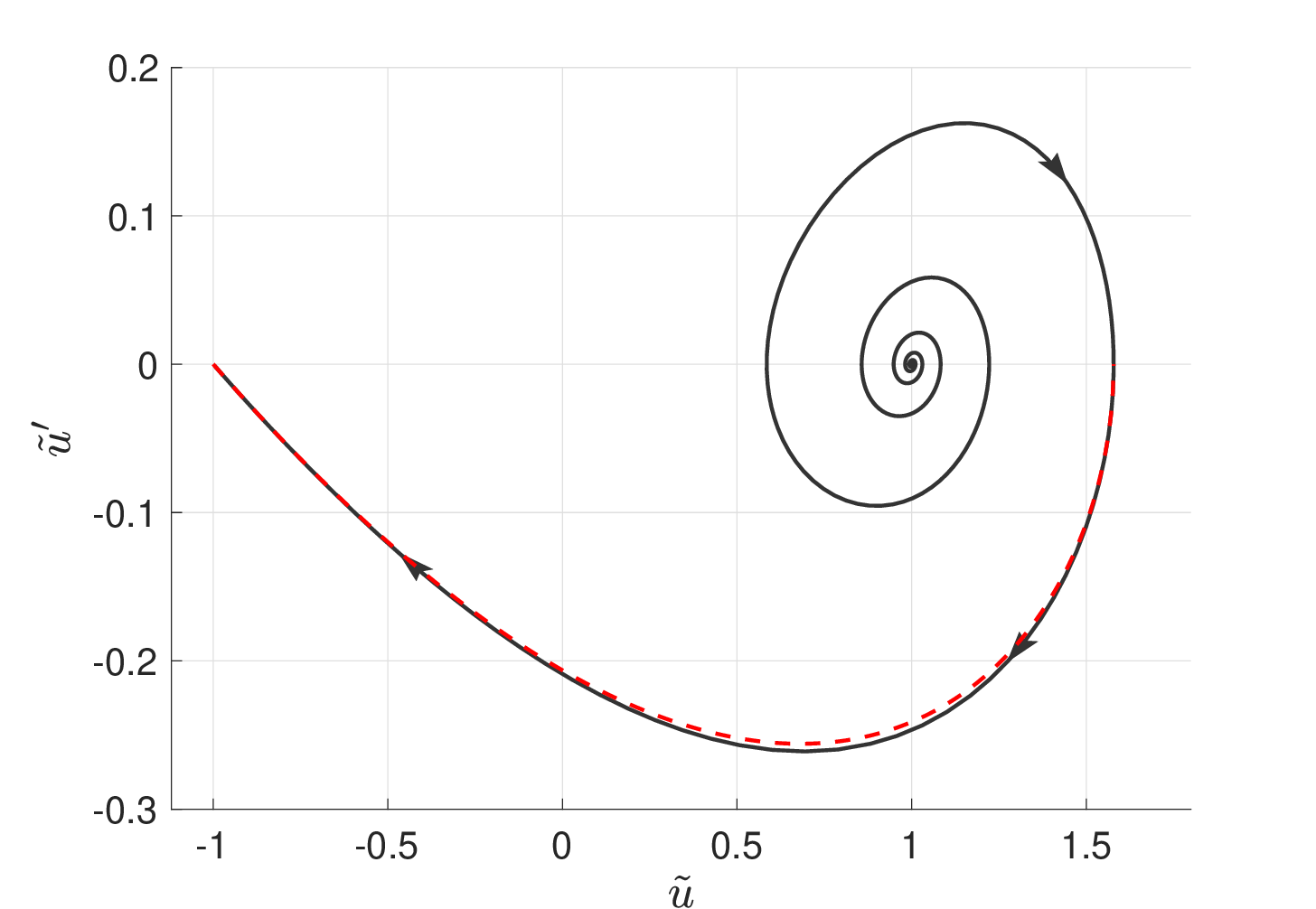}
\caption{Illustration of the heteroclinic orbit in comparison with auxiliary curves on each interval of $\xi$ with monotonic $\util$.
(Left) The dotted curves denote parabolas, such as \(y=\l(u_0-\util)(\util+s)\) in \eqref{ineq_00} for the rightmost monotonic interval, which lie above \(\abs{\util'}\).
(Right) To obtain a sharper estimate for \(u_0\), we show \eqref{ineq_02}.
The slope of \((u_0-\util)^\frac{1}{2}(\util+s)\) is infinite at \(u_0\), yielding a more accurate approximation than \((u_0-\util)(\util+s)\).
In the other monotonic intervals, we use functions of the form \(\l_i(\util-u_i)^\frac{1}{2}(u_{i-1}-\util)^\frac{1}{2}\) to fit the heteroclinic orbit.\label{het}}
\end{figure}

\vspace{2mm}
To apply Lemma \ref{KV_lemma} for the other intervals of $\xi$ with monotonic $\util$, it also requires to establish key inequalities similar to \eqref{ineq_01} (see Proposition \ref{prop:key}) as follows: for odd \(i\), (i.e., \(\util\) is increasing on \((\x_i,\x_{i-1})\))
\begin{equation} \label{ineq_03}
\util'(\x) \ge \l_i(u_{i-1}-\util(\x))(\util(\x)-u_i), \quad \forall\, \x\in(\x_i,\x_{i-1}),
\end{equation}
and for even \(i\), (i.e., \(\util\) is decreasing on \((\x_i,\x_{i-1})\))
\begin{equation} \label{ineq_04}
-\util'(\x) \ge \l_i(u_i-\util(\x))(\util(\x)-u_{i-1}), \quad \forall\, \x\in(\x_i,\x_{i-1}).
\end{equation}
The decay rates \eqref{inc-decay} and \eqref{dec-decay} imply that \(\l_i\) is increasing in \(i\) and diverges to infinity as \(i\to\infty\), which will play a crucial role later.
We also remark that while the right-hand sides of \eqref{ineq_03} and \eqref{ineq_04} are algebraic and thus admissible as candidates for \(f\) in \eqref{utpg}, it provides little analytical leverage.
Instead, we rely on sharper approximations, especially near \(u_i\) and \(u_{i-1}\), namely
\begin{align*}
\util'(\x) &\ge \l_i (u_{i-1}-\util(\x))^\frac{1}{2}(\util(\x)-u_i)^\frac{1}{2},
\quad \forall\, \x\in(\x_i,\x_{i-1}) \\
-\util'(\x) &\ge \l_i (u_i-\util(\x))^\frac{1}{2}(\util(\x)-u_{i-1})^\frac{1}{2},
\quad \forall\, \x\in(\x_i,\x_{i-1})
\end{align*}
for odd and even \(i\), respectively (see Proposition \ref{prop:inc} and \ref{prop:dec}).

\vspace{2mm} 
Thanks to \eqref{ineq_01}, \eqref{ineq_02} and \eqref{ineq_03} with \eqref{ztu}, the dissipation term \(\int_\RR (w_\x)^2 d\x\) can be estimated as the first term on the right-hand side of \eqref{KV} (or the last term in \eqref{L2dis}). So, to use Lemma \ref{KV_lemma} for each interval of $\xi$ with monotonic $\util$, we need the square of the mean as the last term of \eqref{KV}. 
Unfortunately, the following square of global mean obtained from \eqref{X-def} and \eqref{key1} is not localized for each monotonic interval:
\begin{equation}\label{glsm}
-\frac{M}{2s} \Big(\int_\RR w \util' d\x\Big)^2.
\end{equation}
The most challenging part of the proof is to localize the above quantity sufficiently for each monotonic interval.
We perform the localization process in an inductive argument by connecting adjacent monotonic intervals step by step with careful quantitative estimates.

The starting point of the argument is to extract the squared mean from a portion of the diffusion on each interval $J_i:=(\x_i,\x^i)$ with \(\x^i\in (\x_{i-1},\x_{i-2})\) satisfying \(\util(\x^i)=u_i\) (see Figure \ref{fig_mj}) as follows:
\begin{equation} \label{transfer}
\Big(\int_{J_i} w\util' d\x\Big)^2
=\Big(\int_{J_i} w_\x (\util-u_i) d\x\Big)^2
\le \Big(\int_{J_i} (\util-u_i)^2 d\x\Big)
\Big(\int_{J_i} (w_\x)^2 d\x\Big).
\end{equation}
\begin{figure}[htp] \centering
	\includegraphics[width=.6\textwidth]{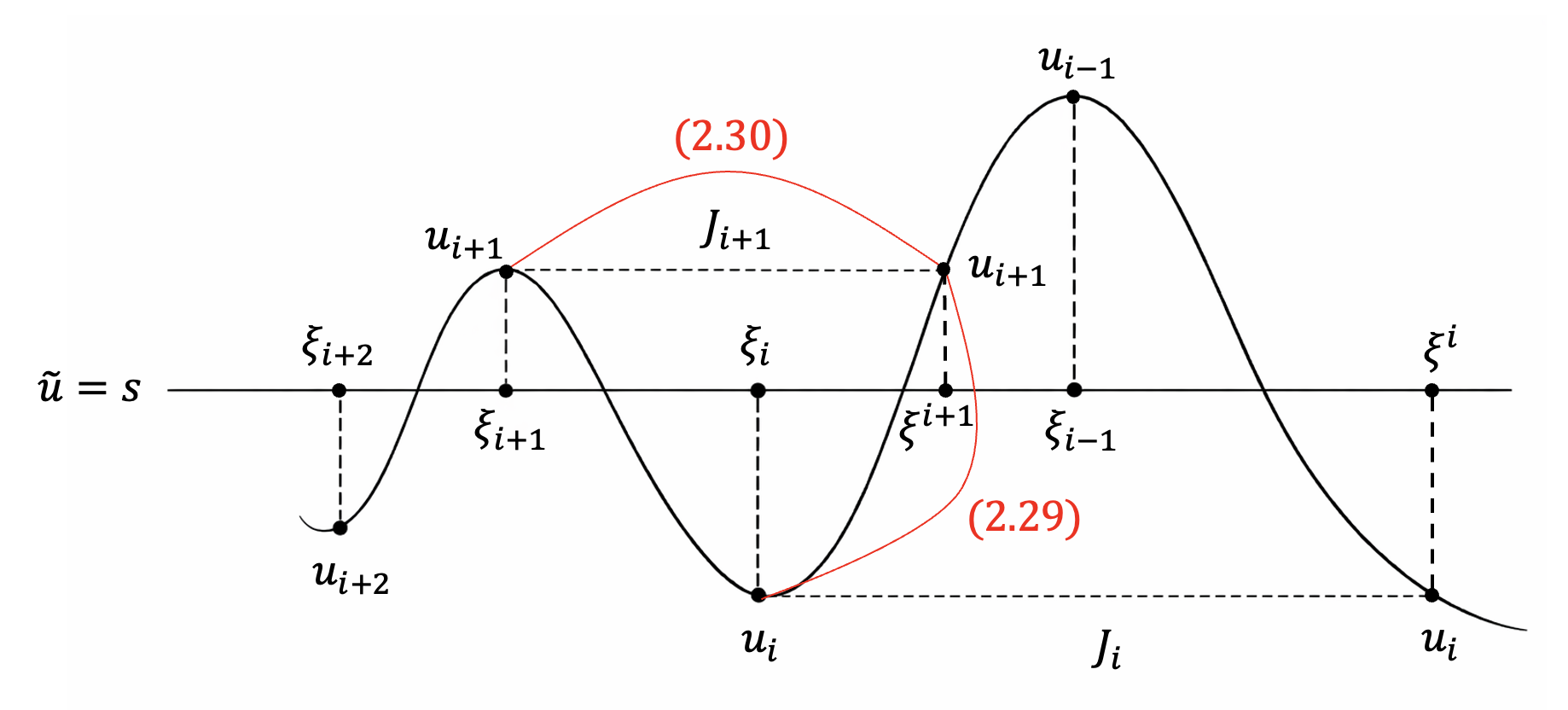}
	\caption{Illustration of \(J_i\) for odd \(i\) and the induction argument.} \label{fig_mj}
\end{figure}
To get the squared mean on a decreasing interval (i.e., on $(\x_{i+1},\x_i)$ for odd $i$), we may use \eqref{transfer} with the following good terms on an increasing interval $(\xi_i, \x_{i-1})$ from the second term on r.h.s. of \eqref{key1}: by $\util'>0$  whenever $i$ is odd,
\begin{equation}\label{goodinc}
-\frac{1}{2} \int_{\x_i}^{\x_{i-1}} w^2 \util' d\x <0.
\end{equation}
This quantity, however, is not sufficient to obtain the desired amount of the squared mean.
In fact, we need slightly more than that.
Thus, as an induction hypothesis, we assume that for odd \(i\), the following good term on \((\x_i,\x_{i-1})\) is available: for some positive constant \(a_i\),
\begin{equation} \label{26}
-\Big(\frac{1}{2}+a_i\Big) \int_{\x_i}^{\x_{i-1}} w^2 \util' d\x.
\end{equation}
The initial induction hypothesis, i.e., the base case, will be justified in \textit{Step 0} and \textit{Step 1} of Section \ref{sec:pf-main}, where the global squared mean \eqref{glsm} plays a significant role. 
Then, we note from \eqref{26} that 
\begin{equation} \label{ind-hypo}
-\Big(\frac{1}{2}+a_i\Big) \int_{\x_i}^{\x_{i-1}} w^2 \util' d\x
\le -\Big(\frac{1}{2}+a_i\Big) \int_{\x_i}^{\x^{i+1}} w^2 \util' d\x 
\le -\frac{\big(\frac{1}{2}+a_i\big)}{u_{i+1}-u_i} \Big(\int_{\x_i}^{\x^{i+1}}w\util'd\x\Big)^2.
\end{equation}
To get the squared mean on $(\xi_{i+1},\xi_i)$  from \eqref{ind-hypo}, we use the squared mean on a wider set $J_{i+1}=(\xi_{i+1}, \xi^{i+1})$ obtained from the diffusion as in \eqref{transfer} : 
\begin{equation}\label{ciplus}
-\frac{C_{i+1}}{u_{i+1}-u_i}\Big(\int_{J_{i+1}} (\util-u_{i+1})^2 d\x\Big)^2
\Big(\int_{J_{i+1}} (w_\x)^2 d\x\Big)^2
\le -\frac{C_{i+1}}{u_{i+1}-u_i}\Big(\int_{J_{i+1}} w\util' d\x\Big)^2.
\end{equation}
Combining \eqref{ind-hypo} and \eqref{ciplus} with choosing large enough \(C_{i+1}\), we obtain a sufficient amount of the squared mean  on $(\xi_{i+1},\xi_i)$ as
\[
-\frac{\big(\frac{1}{2}+a_{i+1}\big)}{u_{i+1}-u_i} \Big(\int_{\x_{i+1}}^{\x_i}w\util'd\x\Big)^2,
\]
for some constant \(a_{i+1}>0\) which is strictly less than \(a_i\) (we will take \(a_{i+1}=\frac{a_i}{2}\)).
This quantity together with a bit of diffusion (thanks to \eqref{ineq_04} with $\lambda_i$ far larger than \(\frac{1}{4}\)) controls the bad term on $(\xi_{i+1},\xi_i)$ via Lemma \ref{KV_lemma}, and so we are left with  
\[
a_{i+1} \int_{\x_{i+1}}^{\x_i}w^2\util'd\x,
\]
from which we have
\[
a_{i+1}\int_{\x_{i+1}}^{\x_i}w^2\util'd\x
\le a_{i+1}\int_{\x_{i+1}}^{\x^{i+2}}w^2\util'd\x
\le -\frac{a_{i+1}}{u_{i+1}-u_{i+2}} \Big(\int_{\x_{i+1}}^{\x^{i+2}} w\util'd\x\Big)^2.
\]
We again use \eqref{transfer} to derive the squared mean term on \(J_{i+2}\), which, in turn, leads us to obtain 
\[
-\frac{a_{i+2}}{u_{i+1}-u_{i+2}} \Big(\int_{\x_{i+2}}^{\x_{i+1}} w\util'd\x\Big)^2
\]
for  \(a_{i+2}=\frac{a_{i+1}}{2}\).
Then, applying Lemma \ref{KV_lemma} with a bit of the diffusion on \((\x_{i+2},\x_{i+1})\), we obtain 
\[
-a_{i+2} \int_{\x_{i+2}}^{\x_{i+1}} w^2 \util' d\x
\]
which with the good term on \((\x_{i+2},\x_{i+1})\) as in \eqref{goodinc} recovers the induction hypothesis \eqref{ind-hypo}.

\vspace{2mm}
In the above procedure, we should carefully estimate the quantity $\frac{1}{\abs{u_{i+1}-u_{i}}}\int_{J_{i+1}} (\util-u_{i+1}) ^2 d\x$ to be small enough, which ensures that there is a sufficient amount of the diffusion term in \eqref{key1} to be used in the inequality \eqref{ciplus}. 
More precisely, from \eqref{ciplus}, the choice of \(\{C_i\}\) depends on the following upper bound:
\[
\frac{1}{\abs{u_{i+1}-u_{i}}}\int_{J_{i+1}} (\util-u_{i+1}) ^2 d\x \lesssim  (\r_*)^{-i},
\]
which will be shown in Section \ref{section_l2}. This quantitative estimate can be performed by choosing $A$ suitably from Theorem \ref{thm:shock}, although the choice of $A$ is not optimal.

\vspace{2mm}
We close this section with a remark on the robustness of our inductive argument.
As noted in Remark \ref{rmk:accuracy}, the estimate on \(u_0\) is sharp.
By contrast, the decay rates \(\r_*\) and \(\r^*\) obtained here do not seem to be accurate when compared with the decay rate at the linearized level \eqref{lin-decay}.
A sharper decay rate would likely extend values of \(A\) beyond the present restriction \(A=\frac{1}{2}\).

\section{Proof of Theorem \ref{thm:shock}: Shock Properties} \label{sec:shock}
\setcounter{equation}{0}
This section is devoted to the proof of Theorem \ref{thm:shock}.
As discussed in Section \ref{sec:idea}, the key step of the proof consists in constructing an appropriate approximation of \(\util'\).
These approximations will be established via the contradiction argument introduced earlier.
The proof proceeds through the successive verification of \eqref{u0-upper}, \eqref{inc-decay} and \eqref{dec-decay}.

We recall that through the proof we work under the assumption \(\s=0\), namely, \(u_-=-u_+=s\), and \(\e=1\).
We also recall that we are in the oscillatory regime where the dispersion dominates dissipation, that is, \(\frac{1}{4}< \d s\).
Moreover, for any constant \(\frac{1}{4}<A\le1\), we carry out the analysis under the condition 
\[
\frac{1}{4} < \d s < A.
\]

\subsection{Upper Bound for the Rightmost Local Maximum}
In this subsection, we derive an upper bound for the rightmost local extremum, i.e., \(u_0=\util(\x_0)\).
The proof relies on a bootstrapping argument: once an initial approximation of the shock derivative is obtained, an upper bound for \(u_0\) follows from the effective energy.
This bound, in turn, yields an improved approximation, which, when combined again with the effective energy, leads to a sharper upper bound for \(u_0\).

The only upper bound initially available is \(u_0<2s\) (see \eqref{t0_2}).
Our first step is to improve this bound by means of a preliminary bootstrapping argument: using the lemma below, we show that \(u_0<\frac{5}{4}s\).
This improvement is essential for the subsequent construction of a sufficiently accurate approximation \eqref{ineq_02} (or Proposition \ref{prop:RM}), which ultimately enables us to establish \eqref{u0-upper}.

\begin{lemma} \label{lem:RM}
Let \(A\) be a constant which satisfies \(\frac{1}{4}<A\le1\).
Assume \(\frac{1}{4}<\d s<A\).
\begin{itemize}
\item[(i)] Let \(k\le1\) be a positive constant such that 
\begin{equation} \label{RM-ass}
k < \frac{2\sqrt{A}}{1+\sqrt{1+4A}} \sqrt{\frac{s}{u_0-s}}.
\end{equation}
Then, for \(\l=k \sqrt{\frac{1}{A}} \sqrt{\frac{u_0-s}{u_0+s}}\), the following holds:
\begin{equation} \label{RM-ineq} 
-\util'(\x) \ge \l \sqrt{s} (u_0-\util(\x))^\frac{1}{2}(\util(\x)+s), \quad \forall\, \x\in(\x_0,\infty).
\end{equation}
Moreover, \(u_0\) satisfies the following upper bound: for \(k_* = \big(1+\frac{32k^2}{25A}\big)\),
\begin{equation} \label{u0}
u_0 \le \big(1+k_*-\sqrt{k_*^2-1}\big)s.
\end{equation}
\item[(ii)] The following holds:
\begin{equation} \label{RM-5/4}
u_0 \le \frac{5}{4}s.
\end{equation} 
\end{itemize}
\end{lemma}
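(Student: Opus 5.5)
The plan for (i) is to work on the rightmost monotone interval \((\x_0,\infty)\), where \(\util\) decreases from \(u_0\) to \(-s\), and to parametrize by \(v=\util\in(-s,u_0)\). Set \(y(v)\coloneqq-\util'>0\) and \(h(v)\coloneqq\l\sqrt{s}(u_0-v)^{1/2}(v+s)\). From \eqref{orbit} (equivalently \eqref{t0}), \(y\) satisfies
\[
\d\, y\,\frac{dy}{dv} = -y - \frac{1}{2}(v^2-s^2),
\]
and \(y\) vanishes at both endpoints \(v=-s\) and \(v=u_0\). The goal \eqref{RM-ineq} is \(y\ge h\) on \([-s,u_0]\). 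I will prove it with the contradiction argument of Section \ref{sec:idea}, with \(p=y\) and \(h\) as the comparison curve.

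\textbf{Step 1 (endpoint behaviour).} At \(v=-s\) (the saddle), \(y\approx\m(v+s)\), where \(\m\) is the unstable eigenvalue: \(\d\m^2+\m-s=0\), so
\[
\m=\frac{2s}{1+\sqrt{1+4\d s}} > \frac{2s}{1+\sqrt{1+4A}}.
\]
The slope of \(h\) at \(-s\) is \(\l\sqrt{s}\sqrt{u_0+s}=k\sqrt{s(u_0-s)/A}\). Hence \eqref{RM-ass} says exactly that this slope is strictly less than \(\m\), so \(h<y\) just to the right of \(-s\).

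At \(v=u_0\), the ODE gives \(y^2\approx (u_0^2-s^2)(u_0-v)/\d\). Thus \(y\approx\sqrt{(u_0^2-s^2)/\d}\,\sqrt{u_0-v}\), whereas \(h\approx\l\sqrt{s}(u_0+s)\sqrt{u_0-v}\). Comparing the coefficients reduces to \(k^2 s/A<1/\d\), which holds since \(k\le1\) and \(\d s<A\). So \(h<y\) near \(u_0\) as well.

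\textbf{Step 2 (no interior crossing).} Suppose \(y<h\) somewhere. Then there are crossing points \(-s<b<c<u_0\) with \(y=h\) at both, \(y'\le h'\) at \(b\), and \(y'\ge h'\) at \(c\). At a crossing point the ODE gives \(\d h y'=-h-\frac12(v^2-s^2)\). Define
\[
g(v)\coloneqq \d h h' + h + \tfrac12(v^2-s^2).
\]
The crossing conditions then give \(g(b)\ge0\) and \(g(c)\le0\), while the endpoint asymptotics of Step 1 give \(g<0\) near \(-s\) and \(g>0\) near \(u_0\) (or the reverse, depending on orientation). This is impossible if \(g\), or a positive multiple of it after factoring out a power of \((u_0-v)\), is convex or concave.

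Writing \(g\) explicitly in terms of \((u_0-v)\) and \((v+s)\) shows it is a polynomial-type expression plus a term \(\l\sqrt{s}(u_0-v)^{1/2}(v+s)\). I will then check the sign of the second derivative directly, using \(k\le1\), \(\d s<A\), and possibly \(u_0<2s\). This convexity verification is the step I expect to be the main obstacle. If \(g\) itself is not convex, I would first try dividing by \((v+s)\), or substituting \(t=\sqrt{u_0-v}\), to obtain a function of a single sign of curvature.

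\textbf{Step 3 (bound on \(u_0\)).} As in Section \ref{sec:idea}, I use \(E'=(\util')^2\), \(\int_{-s}^{u_0}(u_0-v)^{1/2}(v+s)\,dv=\frac{4}{15}(u_0+s)^{5/2}\), and the definition of \(\l\). This yields
\[
\tfrac16(u_0+s)^2(2s-u_0)\ \ge\ \tfrac{4}{15}\,k\sqrt{\tfrac{s(u_0-s)}{A}}\,(u_0+s)^2 .
\]
Since \(u_0>s\) in the oscillatory regime, set \(t=u_0/s-1>0\). Squaring gives \((1-t)^2\ge 2(k_*-1)t\), that is, \(t^2-2k_*t+1\ge0\). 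Since \(t<1\) by \eqref{t0_2}, this forces \(t\le k_*-\sqrt{k_*^2-1}\), which is \eqref{u0}.

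\textbf{Part (ii)} is a bootstrap. Initially \(u_0<2s\), so \(s/(u_0-s)>1\). The quantity \(2\sqrt{A}/(1+\sqrt{1+4A})\) is increasing in \(A\) and is \(\ge 1/(1+\sqrt2)\approx0.414\) for \(A>1/4\). Hence (i) applies with any \(k\) slightly below this value, giving an explicit bound \(u_0\le(1+t_1)s\) with \(t_1<1\). Then \(\sqrt{s/(u_0-s)}\ge t_1^{-1/2}\), which permits a larger \(k\) (capped at \(1\)) and hence a smaller \(t_2\). I will iterate this map explicitly for the worst case \(A\downarrow1/4\) (the bound is monotone in \(A\)). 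I expect a few numerical iterations to push \(t\) below \(1/4\), giving \eqref{RM-5/4}. If necessary, I will note that once \(t\le 1/(4A)\cdot(\cdots)\), the choice \(k=1\) is admissible, and then \(k_*=1+\frac{32}{25A}\ge 1+\frac{32}{25}\) directly gives \(t\le k_*-\sqrt{k_*^2-1}<\tfrac14\).
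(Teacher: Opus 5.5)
Your argument for (i) follows the paper's route: crossing points \(b<c\), an auxiliary function with sign pattern \(-,\ \ge0,\ \le0,\ +\), and a curvature of fixed sign. Your Step 3 is exactly the paper's derivation of \eqref{u0}. Two steps need correcting, however.

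\textbf{The auxiliary function in Step 2.} Your \(g=\d hh'+h+\frac12(v^2-s^2)\) factors as \(g=\frac{v+s}{2}\,\hat g\), where
\[
\hat g(v)=2\l\sqrt{s}\,(u_0-v)^{\frac12}+(v-s)-\l^2\d s\,(v+s)+2\l^2\d s\,(u_0-v).
\]
The function \(g\) itself has no fixed curvature. At the left endpoint,
\[
g''(-s)=1-3\d\l^2 s-\l\sqrt{s}\,(u_0+s)^{-1/2},
\]
and by \eqref{RM-ass} and \(\d s<A\) this exceeds \(\frac{2}{1+\sqrt{1+4A}}-\frac12>0\). On the other hand, \(g''\to-\infty\) as \(v\to u_0\). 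So the direct second-derivative check you planned first fails, and the division by \(v+s\) that you listed as a fallback is mandatory.

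After that division you obtain precisely the paper's function, which is strictly concave: \(\hat g''=-\frac12\l\sqrt{s}(u_0-v)^{-3/2}\). Its endpoint signs come from direct evaluation, not from the asymptotics of \(y\). Let \(m=\l\sqrt{s}(u_0+s)^{1/2}\) be the slope of \(h\) at \(-s\). Then \(\hat g(-s)=2(\d m^2+m-s)\), which is negative iff \(m<\m\). Also \(\hat g(u_0)=(u_0-s)-\l^2\d s(u_0+s)\), which is positive. These two conditions are exactly your two Step 1 inequalities.

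\textbf{The worst case in (ii) is reversed.} Suppose the current bound is \(u_0\le(1+t_n)s\). The admissible \(k\) then satisfies
\[
k^2/A<\min\{1/A,\ 4/((1+\sqrt{1+4A})^2t_n)\},
\]
and this decreases in \(A\). Hence the next bound gets worse as \(A\) grows, and the worst case is \(A=1\), not \(A\downarrow\frac14\). Iterating at \(A\downarrow\frac14\) covers only \(\d s\) near \(\frac14\). Combining the uniform constant \(1/(1+\sqrt2)\) in \eqref{RM-ass} with the uniform bound \(k_*\ge1+\frac{32}{25}k^2\) also fails: the iteration stalls near \(t\approx0.34\), so \(k=1\) never becomes admissible.

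The paper's fix is simple. Since \(\d s<A\le1\), apply (i) with \(A=1\) and bootstrap:
\begin{itemize}
\item[(a)] \(k=\frac12\) gives \(u_0\le\frac32 s\);
\item[(b)] \(k=\frac45\) gives \(u_0\le\frac{13}{10}s\);
\item[(c)] now \(k=1\) is admissible, since \(\frac{2}{1+\sqrt5}\sqrt{10/3}>1\), and \(k_*=\frac{57}{25}\) gives \(u_0<1.24s<\frac54 s\).
\end{itemize}
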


\begin{pf}
The proof is based on the contradiction argument outlined above.
First of all, using the monotonicity of the shock profile on \((\x_0,\infty)\), we introduce a parameterization with respect to \(a=\util\) and define two functions as follows: 
\[
h(a) \coloneqq \util'=\util_\x, \qquad
p(a) \coloneqq -\l\sqrt{s} (u_0-a)^\frac{1}{2} (a+s).
\]
In the proof, we take \(a=\util\) as the variable instead of \(\x\), and thus we use the notation \(\util_\x\) in place of \(\util'\) for clarity.
The desired inequality \eqref{RM-ineq} now boils down to
\begin{equation} \label{RM-ineq2}
h(a) \le p(a), \quad \forall a\in(-s,u_0).
\end{equation}
Then, we examine the two functions at the endpoints as follows: we first have
\begin{equation} \label{h-p}
h(u_0)=p(u_0)=0, \qquad
h(-s)=p(-s)=0.
\end{equation}
To compare the derivatives of the functions \(h\) and \(p\) at the endpoints, we recall \eqref{t0} and divide it by \(\util_\x\), which yields
\[
\d \frac{d\util_\x}{d\util} = 1 - \frac{1}{2\util_\x}(\util-s)(\util+s).
\]
This implies that
\begin{equation} \label{hp}
h'(a) = \frac{d\util_\x}{d\util} = \frac{1}{\d} \Big[1 - \frac{1}{2h(a)}(a-s)(a+s)\Big].
\end{equation}
Note that both \(h'(a)\) and \(p'(a)\) diverge when \(a\) approaches to \(u_0\).
To evaluate the divergence rate of \(h'(a)\) around \(a=u_0\), we apply L'Hôpital's rule to find that 
\[
\lim_{a\to u_0-} \frac{h(a)}{(u_0-a)^\frac{1}{2}}
=\lim_{\x\to \x_0+} \frac{\util_\x}{(u_0-\util)^\frac{1}{2}}
=\lim_{\x\to \x_0+}  -\frac{2(u_0-\util)^\frac{1}{2} \util_{\x\x}}{\util_\x}.
\]
Then, \eqref{t0} implies \(\util_{\x\x}(\x_0)= \frac{s^2-u_0^2}{2\d}\), and thus we have 
\[
\lim_{a\to u_0-} \frac{h(a)}{(u_0-a)^\frac{1}{2}}
=-\sqrt{\frac{u_0^2-s^2}{\d}}.
\]
This together with \eqref{hp} yields that
\begin{equation} \label{hpu0}
\lim_{a\to u_0-} h'(a)(u_0-a)^\frac{1}{2}
=\frac{1}{2}\sqrt{\frac{u_0^2-s^2}{\d}}.
\end{equation}
To observe \(h'(a)\) at \(a=-s\), we need to estimate the following quantity:
\[
\lim_{a\to(-s)+} \frac{h(a)}{a+s}
=\lim_{\x\to\infty}\frac{\util_\x}{\util+s} \eqqcolon L.
\]
We again use \eqref{t0} and L'Hôpital's rule to find that
\begin{align*}
\lim_{\x\to\infty}\Big[\d\frac{\util_\x}{\util+s}-1\Big]
=\lim_{\x\to\infty}\Big[\d\frac{\util_{\x\x}}{\util_\x}-1\Big]
=\lim_{\x\to\infty}\frac{s^2-\util^2}{2\util_\x}
=\lim_{\x\to\infty}\Big[\frac{s-\util}{2} \cdot \frac{\util+s}{\util_\x}\Big].
\end{align*}
This leads \(L\) to satisfy \(\d L^2 - L -s =0\).
Then, since \(\util_\x<0\) and \(\util+s>0\) on \((\x_0,\infty)\), it holds that 
\[
L
= -\frac{2s}{1+\sqrt{1+4\d s}}
\]
and thus, we obtain
\begin{equation} \label{hpns}
h'(-s) = \lim_{a\to(-s)+} \frac{1}{\d} \Big[1 - \frac{1}{2h(a)}(a-s)(a+s)\Big]
=\frac{1}{\d}\Big[1+\frac{s}{L}\Big] = L = -\frac{2s}{1+\sqrt{1+4\d s}}.
\end{equation}
On the other hand, we have 
\[
p'(a) = \frac{\l}{2}\sqrt{s} (u_0-a)^{-\frac{1}{2}}(a+s)
-\l\sqrt{s} (u_0-a)^\frac{1}{2}.
\]
Then, it follows that 
\[
\lim_{a\to u_0-} p'(a)(u_0-a)^\frac{1}{2} = \frac{\l}{2}\sqrt{s} (u_0+s), \qquad
p'(-s)=-\l\sqrt{s}(u_0+s)^\frac{1}{2}.
\]
Now we compare \(h'(a)\) and \(p'(a)\) at the endpoints.
Since \(k\le1\), or equivalently, \(\l \le \sqrt{\frac{1}{A}}\sqrt{\frac{u_0-s}{u_0+s}}\),
\begin{equation} \label{hppp-u0}
\lim_{a\to u_0-} h'(a)(u_0-a)^\frac{1}{2}
=\frac{1}{2}\sqrt{\frac{u_0^2-s^2}{\d}}
>\frac{\l}{2}\sqrt{s} (u_0+s)
=\lim_{a\to u_0-} p'(a)(u_0-a)^\frac{1}{2}.
\end{equation}
Moreover, under the assumption \(\d s<A\), the condition \eqref{RM-ass} directly implies that
\begin{equation} \label{hppp-ns}
-h'(-s)=\frac{2s}{1+\sqrt{1+4\d s}}
>\l\sqrt{s}(u_0+s)^\frac{1}{2}=-p'(-s).
\end{equation}

We now proceed to prove \eqref{RM-ineq2} by contradiction, i.e., we assume that there exists \(a\in(-s,u_0)\) such that \(h(a)>p(a)\).
Then, the earlier observations \eqref{h-p} and \eqref{hppp-u0}-\eqref{hppp-ns} imply that there exist two points \(b\) and \(c\) with \(-s<b<c<u_0\) such that
\begin{align*}
&h(b)=p(b), &&h'(b)-p'(b) \ge 0\\
&h(c)=p(c), &&h'(c)-p'(c) \le 0.
\end{align*}
We consider a function \(g:[-s,u_0]\to\RR\) which is defined as 
\[
g(a) \coloneqq
2\l\sqrt{s}(u_0-a)^\frac{1}{2}
+ (a-s)
-\l^2 \d s(a+s)
+2\l^2 \d s (u_0-a).
\]
Then, when \(a=b\) or \(a=c\), we have
\begin{align*}
h'(a)-p'(a)
&=\frac{1}{\d}\Big[1-\frac{1}{2p(a)}(a-s)(a+s)\Big]
-\frac{\l}{2}\sqrt{s}(u_0-a)^{-\frac{1}{2}} (a+s)
+\l\sqrt{s}(u_0-a)^\frac{1}{2}\\
&=\frac{1}{\d}\Big[1+\frac{a-s}{2\l\sqrt{s}(u_0-a)^\frac{1}{2}}\Big]
-\frac{\l}{2}\sqrt{s}(u_0-a)^{-\frac{1}{2}} (a+s)
+\l\sqrt{s}(u_0-a)^\frac{1}{2},
\end{align*}
it follows that 
\[
g(b) = 2\l \d \sqrt{s} (u_0-b)^\frac{1}{2} (h'(b)-p'(b)) \ge0, \qquad
g(c) = 2\l \d \sqrt{s} (u_0-c)^\frac{1}{2} (h'(c)-p'(c)) \le0.
\]
Moreover, since \(k\le1\), or equivalently, \(\l \le \sqrt{\frac{1}{A}}\sqrt{\frac{u_0-s}{u_0+s}}\), we find that
\begin{equation} \label{gu0}
g(u_0) = (u_0-s) -\l^2 \d s (u_0+s)
\ge (u_0-s) - \frac{\d s}{A} (u_0-s) >0,
\end{equation}
and we claim that
\begin{equation} \label{gns}
g(-s)
=2\l\sqrt{s}(u_0+s)^\frac{1}{2} +2\l^2\d s(u_0+s) -2s
<0.
\end{equation}
This can be verified as follows: \eqref{gns} is equivalent to 
\[
\frac{k}{\sqrt{A}}\sqrt{\frac{u_0-s}{s}}
+k^2\frac{\d s}{A}\frac{u_0-s}{s} <1,
\]
which can be immediately established by the assumption \(\d s<A\) and the condition \eqref{RM-ass}:
\[
\frac{k}{\sqrt{A}}\sqrt{\frac{u_0-s}{s}}
+k^2\frac{\d s}{A}\frac{u_0-s}{s}
< \frac{2}{1+\sqrt{1+4A}} + \frac{4A}{(1+\sqrt{1+4A})^2}
=1.
\]
On the other hand, the function \(g\) is strictly concave:
\[
g''(a)
=-\frac{1}{2}\l\sqrt{s}(u_0-a)^{-\frac{3}{2}}<0.
\]
In summary, the concave function \(g\) satisfies
\[
g(-s) < 0, \qquad
g(b) \ge 0, \qquad
g(c) \le 0, \qquad
g(u_0) > 0, \qquad
(-s<b<c<u_0)
\]
which gives a contradiction.
This completes the proof of \eqref{RM-ineq2} and \eqref{RM-ineq}.

\vspace{2mm}
We now prove \eqref{u0}.
We first note by the definition of the effective energy \(E\) that
\[
E(+\infty)-E(\x_0)
=\int_{\x_0}^\infty (\util')^2 d\x
=\frac{1}{6}(2s-u_0)(u_0+s)^2.
\]
Then, using \eqref{RM-ineq}, we obtain
\begin{align*}
\frac{1}{6}(2s-u_0)(u_0+s)^2
=\int_{\x_0}^\infty (\util')^2 d\x
&\ge \l \sqrt{s} \int_{\x_0}^\infty (u_0-\util)^\frac{1}{2} (\util+s) (-\util')d\x\\
&= \l \sqrt{s} \int_{-s}^{u_0} (u_0-\util)^\frac{1}{2} (\util+s) d\util
=\frac{4}{15}\l\sqrt{s}(u_0+s)^\frac{5}{2}.
\end{align*}
This yields that 
\[
2s-u_0
\ge \frac{8}{5}\l \sqrt{s} \sqrt{u_0+s}
=\frac{8}{5} \sqrt{\frac{1}{A}} k \sqrt{s} \sqrt{u_0-s}.
\]
Then, letting \(u_0=(1+\a)s\), we obtain
\[
1-\a \ge \frac{8}{5} \sqrt{\frac{1}{A}} k \sqrt{\a},
\]
from which it follows that for \(k_*=\big(1+\frac{32k^2}{25A}\big)\),
\[
\a^2-2k_*\a+1 \ge 0.
\]
This yields that 
\[
\a \le k_* -\sqrt{k_*^2-1},
\]
and thus the desired upper bound \eqref{u0} holds.
This completes the proof of \eqref{u0}.

\vspace{2mm}
Finally, we show \eqref{RM-5/4}.
In the proof, we may assume that \(A=1\) and we apply a bootstrapping argument as follows: since only \(u_0<2s\) is initially available, we choose \(k=\frac{1}{2}\) so that \eqref{RM-ass} holds.
Then, \eqref{u0} implies that 
\[
u_0 \le \frac{3}{2}s.
\]
This allows the choice of a larger \(k\).
We now choose \(k=\frac{4}{5}\).
This together with \eqref{u0} implies that
\[
u_0 \le \frac{13}{10}s.
\]
Now \(k=1\) is available.
Then, using \eqref{u0}, we obtain \eqref{RM-5/4}, which completes the proof.
\end{pf}

\vspace{2mm}
The following proposition provides a much more accurate approximation of the shock derivative.
This will be used to establish \eqref{u0-upper}.

\begin{proposition} \label{prop:RM}
Let \(\l\) and \(\m\) be constants given as
\begin{equation} \label{mu-lda}
\l=\frac{1}{\sqrt{A}}\sqrt{\frac{u_0-s}{u_0+s}}, \qquad
\m=\frac{\frac{2s}{1+\sqrt{1+4A}}-\frac{1}{\sqrt{A}}\sqrt{s(u_0-s)}}{u_0+s}.
\end{equation}
Then, the following holds:
\begin{equation} \label{RM-ineq_2} 
-\util'(\x) \ge
\l \sqrt{s} (u_0-\util(\x))^\frac{1}{2}(\util(\x)+s)
+\m (u_0-\util(\x))(\util(\x)+s), \quad \forall\, \x\in(\x_0,\infty).
\end{equation}
\end{proposition}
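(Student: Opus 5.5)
I will prove Proposition~\ref{prop:RM} with the same contradiction scheme as in Lemma~\ref{lem:RM}(i). Parametrize the rightmost monotone piece by \(a=\util\in(-s,u_0)\) and set
\[
h(a)\coloneqq\util_\x,\qquad
p(a)\coloneqq-\l\sqrt{s}(u_0-a)^\frac12(a+s)-\m(u_0-a)(a+s).
\]
The claim \eqref{RM-ineq_2} then reads \(h\le p\) on \((-s,u_0)\). First I check that \(\m\ge0\), so that \(p\) is still nonpositive; this follows from \eqref{RM-5/4}. Indeed, with \(u_0=(1+\a)s\) and \(\a\le\frac14\), we have \(\frac{1}{\sqrt A}\sqrt{s(u_0-s)}\le \frac{s}{2\sqrt A}\), and I expect this to be below \(\frac{2s}{1+\sqrt{1+4A}}\) for \(A\le1\), possibly after using the sharper bound \eqref{u0} with \(k=1\).

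Next come the endpoint comparisons. Both functions vanish at \(-s\) and at \(u_0\). Near \(u_0\) the \(\m\)-term is lower order, so \eqref{hpu0} gives \(\lim (u_0-a)^{1/2}p'(a)=\frac{\l}{2}\sqrt s(u_0+s)\). Since \(\l\) is exactly the value \(k=1\) from Lemma~\ref{lem:RM}, the strict inequality \eqref{hppp-u0} follows from \(\d s<A\). At \(a=-s\), we have \(p'(-s)=-\l\sqrt s(u_0+s)^{1/2}-\m(u_0+s)=-\frac{2s}{1+\sqrt{1+4A}}\), by the very choice of \(\m\). This is strictly larger than \(h'(-s)=-\frac{2s}{1+\sqrt{1+4\d s}}\) by \eqref{hpns}, because \(\d s<A\). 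So \(\m\) is tuned to saturate the slope at \(-s\) up to the gap \(A\) versus \(\d s\). As in the lemma, if \(h>p\) somewhere, there exist \(-s<b<c<u_0\) with \(h=p\) at both points, \(h'-p'\ge0\) at \(b\) and \(h'-p'\le0\) at \(c\).

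At such contact points I substitute \(h=p\) into \eqref{hp}. Writing \(p(a)=-(a+s)q(a)\) with \(q(a)=\l\sqrt s(u_0-a)^{1/2}+\m(u_0-a)\), the quantity \(\frac{1}{2h}(a-s)(a+s)\) becomes \(-\frac{a-s}{2q}\). Multiplying \(h'-p'\) by the positive factor \(2\d q(a)\) gives
\[
g(a)\coloneqq 2q+(a-s)+2\d q\big(q-(a+s)\tfrac{\l\sqrt s}{2}(u_0-a)^{-1/2}-\m(a+s)\big),
\]
and \(g\) has the same sign as \(h'-p'\) at \(b\) and \(c\). It then suffices to show \(g(-s)<0\), \(g(u_0)>0\), and that \(g\) is strictly concave on \((-s,u_0)\); this contradicts the sign pattern at \(b,c\), exactly as in Lemma~\ref{lem:RM}. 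The endpoint signs come essentially from the endpoint slope comparisons above. Near \(u_0\), \(g\to(u_0-s)-\l^2\d s(u_0+s)>0\) as in \eqref{gu0}. At \(-s\), \(g(-s)=2q(-s)-2s+2\d q(-s)^2\). Since \(q(-s)=\frac{2s}{1+\sqrt{1+4A}}=:q_A\) solves \(\frac{A}{s}q_A^2+q_A-s=0\), the bound \(\d s<A\) gives \(g(-s)<0\).

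The main obstacle is the concavity of \(g\), which is no longer immediate because of the cross terms. Setting \(t=(u_0-a)^{1/2}\) and expanding, \(g\) becomes a combination of \(t\), \(t^2\), \(t^3\), \(t^4\) and \((a+s)/t\)-type terms. The pure \(\l\) part reproduces the previous concave function, with second derivative \(-\frac12\l\sqrt s\,t^{-3}\). The new terms involving \(\m\) are, in \(a\), a quadratic \(2\d\m^2(u_0-a)^2-2\d\m^2(a+s)(u_0-a)\), whose second derivative is \(8\d\m^2\). There are also mixed terms \(\l\m\d\sqrt s\,[\,2t^3-(a+s)t\,]\), whose second derivative in \(a\) is of order \(\l\m\d\sqrt s\,t^{-1}\) with sign to be computed. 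I would try to show that \(-\frac12\l\sqrt s\,t^{-3}\) dominates these, using \(t^2\le u_0+s\le\frac94 s\), the smallness of \(\m\) (since \(\m(u_0+s)\le q_A\le s\)), and \(\d s<A\le1\). This is the step that may require an explicit numerical check in \(A\). If global concavity fails, the fallback is to prove \(g\) has at most one sign change by checking that \(g'\) is monotone on \((-s,u_0)\).
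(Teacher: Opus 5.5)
Up to the last step your argument is the paper's: the same $h$ and $p$, the same endpoint comparisons, and the same $g$. Your $g$, once $2\d q\cdot(\cdots)$ is expanded, coincides with the paper's. You also get $g(u_0)>0$ and $g(-s)<0$ exactly as the paper does. The gap is the concavity step. It cannot be fixed by sharper constants, because $g$ is \emph{not} concave on $(-s,u_0)$.

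With $t=(u_0-a)^{1/2}$ and $S=u_0+s$, no $(a+s)/t$ terms survive, and $g$ is a quartic polynomial in $t$:
\[
g(u_0-t^2)=4\d\m^2t^4+7\d\l\m\sqrt{s}\,t^3+(\cdots)\,t^2+\l\sqrt{s}\,(2-3\d\m S)\,t+(u_0-s-\d\l^2 sS).
\]
A direct computation gives
\[
4t^3\,\frac{d^2g}{da^2}=32\d\m^2t^3+21\d\l\m\sqrt{s}\,t^2+\l\sqrt{s}\,(3\d\m S-2).
\]
Near $a=u_0$ this is negative, since $3\d\m S<\frac32(\sqrt{1+4A}-1)<2$. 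At $a=-s$, however, it equals $32\d\m^2S^{3/2}+\l\sqrt{s}\,(24\d\m S-2)$, which is positive. The reason is that $\l$ carries the small factor $\sqrt{u_0-s}$ while $\m$ does not. For instance, for $A=\frac12$, the bound $u_0\le 1.0601s$ gives $\m S>0.38s$, hence $24\d\m S>2$ already.

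So $g$ is convex near $-s$, and the term $-\frac12\l\sqrt{s}\,t^{-3}$ can never dominate there. Your fallback fails too: requiring $g'$ to be monotone is the same as requiring global convexity or concavity.

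The missing idea is that concavity is not needed; it suffices that $g$ has at most one inflection point. This is what the paper does. In $y=(u_0-a)^{1/2}$, the function $G(y)=g(a)$ is the quartic above, and
\[
G'''(y)=96\d\m^2y+42\d\l\m\sqrt{s}>0 \quad\text{for } y\ge0 .
\]
This is where $\m>0$ is used. Hence $G$ is concave and then convex. Directly in $a$, the right-hand side of the formula for $4t^3g''$ is increasing in $t$, so $g$ is convex and then concave.

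Such a function cannot satisfy $g(-s)<0\le g(b)$ and $g(c)\le0<g(u_0)$ with $-s<b<c<u_0$. Let $a_*$ be the inflection point.
\begin{itemize}
\item If $a_*\ge c$, convexity on $[-s,c]$ is violated.
\item If $a_*\le b$, concavity on $[b,u_0]$ is violated.
\item If $b<a_*<c$, convexity on $[-s,a_*]$ forces $g(a_*)>0$. Then $g(a_*)>0\ge g(c)$ together with $g(u_0)>0$ violates concavity on $[a_*,u_0]$.
\end{itemize}

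A smaller point concerns $\m>0$. Deriving it from $u_0\le\frac54 s$ works only for $A>\frac49$. This covers $A=\frac12$, the case used later, but for smaller $A$ you need a sharper bound on $u_0$. One way is to iterate Lemma~\ref{lem:RM}(i) for the given $A$ with $k<1$. Invoking \eqref{u0} with $k=1$ for that same $A$ would be circular, since \eqref{RM-ass} with $k=1$ is exactly the condition $\m>0$. The paper's one-line appeal to \eqref{RM-5/4} has the same limitation.
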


\begin{pf}
First of all, we note by \eqref{RM-5/4} that \(\m>0\).
As in Lemma \ref{lem:RM}, we introduce two functions of \(a=\util\) defined as follows: 
\[
h(a) \coloneqq \util' = \util_\x, \qquad
p(a) \coloneqq
-\l\sqrt{s} (u_0-a)^\frac{1}{2} (a+s)
-\m(u_0-a)(a+s).
\]
Then, the desired inequality \eqref{RM-ineq} boils down to
\begin{equation} \label{RM-ineq2_2}
h(a) \le p(a), \quad \forall a\in(-s,u_0).
\end{equation}
In the same way, we examine the two functions at the endpoints as follows:
\begin{equation} \label{h-p_2}
h(u_0)=p(u_0)=0, \qquad
h(-s)=p(-s)=0.
\end{equation}
We also recall \eqref{hpu0} and \eqref{hpns}: 
\[
\lim_{a\to u_0-} h'(a)(u_0-a)^\frac{1}{2}
=\frac{1}{2}\sqrt{\frac{u_0^2-s^2}{\d}}, \qquad
h'(-s) = -\frac{2s}{1+\sqrt{1+4\d s}}.
\]
On the other hand, since we have
\[
p'(a) = \frac{\l}{2}\sqrt{s} (u_0-a)^{-\frac{1}{2}}(a+s)
-\l\sqrt{s} (u_0-a)^\frac{1}{2}
+\m(2a+s-u_0),
\]
it follows that
\[
\lim_{a\to u_0-} p'(a)(u_0-a)^\frac{1}{2} = \frac{\l}{2}\sqrt{s} (u_0+s), \qquad
p'(-s) = -\l\sqrt{s}(u_0+s)^\frac{1}{2} -\m(u_0+s).
\]
Then, using \eqref{mu-lda} with \(\d s<A\), we obtain
\begin{equation} \label{hppp-u0_2}
\lim_{a\to u_0-} h'(a)(u_0-a)^\frac{1}{2}
=\frac{1}{2}\sqrt{\frac{u_0^2-s^2}{\d}}
>\frac{\l}{2}\sqrt{s} (u_0+s)
=\lim_{a\to u_0-} p'(a)(u_0-a)^\frac{1}{2}.
\end{equation}
Moreover, using \eqref{mu-lda} with \(\d s<A\) once again, we find that 
\[
\l\sqrt{s}(u_0+s)^\frac{1}{2} +\m(u_0+s)
=\frac{1}{\sqrt{A}}\sqrt{s(u_0-s)}
+\frac{2s}{1+\sqrt{1+4A}}-\frac{1}{\sqrt{A}}\sqrt{s(u_0-s)}
=\frac{2s}{1+\sqrt{1+4A}}
\]
and thus, we obtain
\begin{equation} \label{hppp-ns_2}
-h'(-s)=\frac{2s}{1+\sqrt{1+4\d s}}
>\l\sqrt{s}(u_0+s)^\frac{1}{2}
+\m(u_0+s)=-p'(-s).
\end{equation}

We now assume the contrary, i.e., we assume that there exists \(a\in(-s,u_0)\) such that \(h(a)>p(a)\).
Thanks to \eqref{h-p_2}, \eqref{hppp-u0_2} and \eqref{hppp-ns_2}, there exist two points \(b\) and \(c\) with \(-s<b<c<u_0\) such that
\begin{align*}
&h(b)=p(b), &&h'(b)-p'(b) \ge 0\\
&h(c)=p(c), &&h'(c)-p'(c) \le 0.
\end{align*}
Then, we define a function \(g:[-s,u_0]\to\RR\) by
\begin{align*}
g(a) &\coloneqq
2\l\sqrt{s}(u_0-a)^\frac{1}{2}
+2\m(u_0-a)
+(a-s)
-\l^2 \d s (a+s)
-3\l\m\d\sqrt{s}(u_0-a)^\frac{1}{2}(a+s) \\
&\qquad
+2\l^2 \d s (u_0-a)
+4\l\m\d\sqrt{s}(u_0-a)^\frac{3}{2}
-2\m^2\d(u_0-a)(2a+s-u_0).
\end{align*}
Since \(h(b)=p(b)\) and \(h(c)=p(c)\), it follows that when \(a=b\) or \(a=c\), 
\begin{align*}
h'(a)-p'(a)
&=\frac{1}{\d}\Big[1+\frac{a-s}{2\l\sqrt{s}(u_0-a)^\frac{1}{2}+2\m(u_0-a)}\Big]\\
&\qquad-\frac{\l}{2}\sqrt{s}(u_0-a)^{-\frac{1}{2}} (a+s)
+\l\sqrt{s}(u_0-a)^\frac{1}{2}
-\m(2a+s-u_0).
\end{align*}
Thus, we have
\begin{align*}
g(b) = \Big(2\l \d \sqrt{s} (u_0-b)^\frac{1}{2} + 2\d\m(u_0-a)\Big) (h'(b)-p'(b)) \ge0,\\
g(c) = \Big(2\l \d \sqrt{s} (u_0-c)^\frac{1}{2} + 2\d\m(u_0-a)\Big) (h'(c)-p'(c)) \le0.
\end{align*}
We also observe that
\begin{equation} \label{gu0_2}
g(u_0) = (u_0-s) -\l^2 \d s (u_0+s)
= (u_0-s) - \frac{\d s}{A} (u_0-s) >0.
\end{equation}
Moreover, since 
\[
\l\sqrt{s}(u_0+s)^\frac{1}{2}
+\m(u_0+s)
=\frac{2s}{1+\sqrt{1+4A}}
\]
and 
\begin{align*}
g(-s)
&=2\Big(\l\sqrt{s}(u_0+s)^\frac{1}{2}
+\m(u_0+s)\Big)
+2\l\d\sqrt{s} (u_0+s)^\frac{1}{2} \Big(\l\sqrt{s}(u_0+s)^\frac{1}{2}
+\m(u_0+s)\Big)\\
&\qquad
+2\m\d (u_0+s)\Big(\l\sqrt{s}(u_0+s)^\frac{1}{2}
+\m(u_0+s)\Big)
-2s,
\end{align*}
we find that 
\begin{equation}
\begin{aligned} \label{gns_2}
g(-s)
&=\frac{2s}{1+\sqrt{1+4A}}
\Big(2 + \frac{4\d s}{1+\sqrt{1+4A}}\Big)
-2s\\
&<\frac{2s}{1+\sqrt{1+4A}}
\Big(2 + \frac{4A}{1+\sqrt{1+4A}}\Big)
-2s =0.
\end{aligned}
\end{equation}
Now, we introduce \(y\coloneqq (u_0-a)^\frac{1}{2}\).
Then, \(g(a)\) can be seen as a function of \(y\) as follows:
\begin{align*}
G(y)&\coloneqq
2\l\sqrt{s}y
+2\m y^2
+(u_0-s-y^2)
-\l^2 \d s (u_0+s-y^2)
-\l\m\d\sqrt{s}y(u_0+s-y^2)\\
&\qquad
+2\l^2\d s y^2
+2\l\m\d\sqrt{s}y^3
-2\l\m\d\sqrt{s}y(s+u_0-2y^2)
-2\m^2\d y^2(s+u_0-2y^2)
(=g(a)).
\end{align*}
This is a quadratic polynomial function whose coefficients of the fourth-degree and third-degree terms are positive---namely, 
\[
G(y) = 4\m^2 \d y^4
+7\l\m\d\sqrt{s} y^3
+\text{lower order terms}\cdots.
\]
Thus, the third derivative \(G'''(y)\) is nonnegative, implying that the second derivative \(G''(y)\) is increasing.
Hence, the convexity can change at most once, from concave to convex.
This contradicts to the fact that \(-s<b<c<u_0\) and \(0<(u_0-c)^\frac{1}{2}<(u_0-b)^\frac{1}{2}<(u_0+s)^\frac{1}{2}\) with 
\[
G(0)>0, \qquad
G((u_0-c)^\frac{1}{2}) \le 0, \qquad
G((u_0-b)^\frac{1}{2}) \ge 0, \qquad
G((u_0+s)^\frac{1}{2}) <0.
\]
This completes the proof of \eqref{RM-ineq2_2} and \eqref{RM-ineq_2}.
\end{pf}

\vspace{2mm}
We are now ready to prove \eqref{u0-upper} in Theorem \ref{thm:shock}, whose proof relies on the approximation of the derivative of the shock profile given in Proposition \ref{prop:RM}.

\paragraph{Proof of \eqref{u0-upper} in Theorem \ref{thm:shock}.}
As in the proof of \eqref{u0}, using \eqref{RM-ineq_2} with the effective energy, we obtain
\begin{align*}
\frac{1}{6}(2s-u_0)(u_0+s)^2
&=E(+\infty)-E(\x_0)
=\int_{\x_0}^\infty (\util_\x)^2 d\x\\
&\ge \l \sqrt{s} \int_{\x_0}^\infty (u_0-\util)^\frac{1}{2} (\util+s) (-\util_\x)d\x
+\m \int_{\x_0}^\infty (u_0-\util) (\util+s) (-\util_\x)d\x\\
&= \l \sqrt{s} \int_{-s}^{u_0} (u_0-\util)^\frac{1}{2} (\util+s) d\util
+\m \int_{-s}^{u_0} (u_0-\util) (\util+s) d\util\\
&=\frac{4}{15}\l\sqrt{s}(u_0+s)^\frac{5}{2}
+\frac{\m}{6}(u_0+s)^3.
\end{align*}
Then, using \eqref{mu-lda}, the above inequality can be rewritten as 
\[
2s-u_0
\ge \frac{3\sqrt{s}}{5\sqrt{A}} (u_0-s)^\frac{1}{2}
+\frac{2s}{1+\sqrt{1+4A}}.
\]
Let \(u_0=(1+\a)s\).
The above inequality is equivalent to
\[
1-\a \ge \frac{3\sqrt{\a}}{5\sqrt{A}} + \frac{2}{1+\sqrt{1+4A}}
\]
and it follows that 
\[
\sqrt{\a} \le -\frac{3}{10\sqrt{A}}
+\frac{1}{2}\sqrt{\frac{9}{25A}-4\Big(\frac{2}{1+\sqrt{1+4A}}-1\Big)}.
\]
This immediately implies the desired estimate \eqref{u0-upper}. \qed

\subsection{Decay Rates of Oscillations} \label{sec:other_cycles}
We now turn our attention to the other local extrema.
In particular, we quantify the rate at which the local extrema \(u_i\) (see Figure \ref{pp} for the definition of \(u_i\)) approach the left end state \(u_-=s\) as \(i\) increases.
These decay rates are crucial when deriving the key inequalities and the \(L^2\) estimate in the following section.

\vspace{2mm}
The key ingredient of the proof is the construction of an approximation curve of the shock derivative in the \((\util,\util')\)-plane.
We divide the analysis into two cases: the increasing and decreasing intervals of the shock profiles.

\vspace{2mm}
We first consider the increasing intervals of the shock profile, namely the intervals \(I_i\coloneqq(\x_i,\x_{i-1})\) for odd \(i\).
Note that for odd \(i\), \(\util'>0\) on \(I_i\) and 
\[
\util(\x_i)=u_i <s< \util(\x_{i-1})=u_{i-1}, \qquad
u_{i-1}-s > s-u_i, \qquad
\util'(\x_i)=\util'(\x_{i-1}) =0.
\]
In fact, \(u_{i-1}-s > s-u_i\) needs to be justified.
At this stage, the only available estimates are \eqref{u0-upper}, i.e., \(u_{i-1}\le u_0 \le 1.15s\) for \(A\le1\), and \(u_i>-s\).
These bounds are sufficient to establish the key inequality \eqref{ineq_03} with \(\l_i=\frac{3}{10}\) for all odd \(i\).
In other words, the proof of \textit{Case 2} in Proposition \ref{prop:key} works with these two bounds and \(\lbar_i=\frac{3}{10}\).
Combined with the argument in the proof of \eqref{inc-decay}, based on the effective energy, \(u_{i-1}-s > s-u_i\) can be obtained.
We omit the further details.

\vspace{2mm}
Then, we present the following lemma, which provides an approximation of the shock derivative.

\begin{proposition} \label{prop:inc}
Let \(A\) be a constant which satisfies \(\frac{1}{4}<A\le1\).\\
Then, for each \(i\in\NN\) with \(i\) odd and for \(\l_i=\sqrt{\frac{1}{A}} \sqrt{\frac{s(s^2-u_i^2)}{u_{i-1}-u_i}}\), the following holds:
\begin{equation} \label{inc-ineq}
\util'(\x) \ge \l_i(u_{i-1}-\util(\x))^\frac{1}{2} (\util(\x)-u_i)^\frac{1}{2},
\quad \forall \x \in I_i.
\end{equation}
\end{proposition}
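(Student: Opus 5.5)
We follow the contradiction scheme of Lemma \ref{lem:RM} and Proposition \ref{prop:RM}. On \(I_i\) (odd \(i\)) the profile is increasing, so we parametrize by \(a=\util\in(u_i,u_{i-1})\) and set
\[
h(a)\coloneqq \util_\x, \qquad p(a)\coloneqq \l_i (u_{i-1}-a)^\frac{1}{2}(a-u_i)^\frac{1}{2}.
\]
The claim \eqref{inc-ineq} becomes \(h(a)\ge p(a)\) on \((u_i,u_{i-1})\). Both functions vanish at the endpoints. By \eqref{hp}, \(h'(a)=\frac{1}{\d}\big[1-\frac{(a-s)(a+s)}{2h(a)}\big]\).

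\emph{Endpoint behavior.} At a nondegenerate extremum \(u_j\), \eqref{t0} gives \(\util_{\x\x}(\x_j)=\frac{s^2-u_j^2}{2\d}\). Using L'H\^opital as in the derivation of \eqref{hpu0}, we get
\[
h(a)\sim\sqrt{\tfrac{s^2-u_i^2}{\d}}\,(a-u_i)^{1/2}\ \text{as } a\to u_i^+, \qquad h(a)\sim\sqrt{\tfrac{u_{i-1}^2-s^2}{\d}}\,(u_{i-1}-a)^{1/2}\ \text{as } a\to u_{i-1}^-.
\]
Meanwhile \(p(a)\sim\l_i(u_{i-1}-u_i)^{1/2}(a-u_i)^{1/2}\) near \(u_i\). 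Since \(\l_i^2(u_{i-1}-u_i)=s(s^2-u_i^2)/A\) and \(\d s<A\), we have \(\l_i^2(u_{i-1}-u_i)<(s^2-u_i^2)/\d\). Hence \(h>p\) near \(u_i\).

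Near \(u_{i-1}\) we need \(\l_i^2(u_{i-1}-u_i)<(u_{i-1}^2-s^2)/\d\), i.e. \(\frac{s^2-u_i^2}{A}<\frac{u_{i-1}^2-s^2}{\d s}\). This follows from \(u_{i-1}-s>s-u_i\) (justified just before the proposition) and \(u_{i-1}+s>s+u_i\), together with \(\d s<A\). So \(h>p\) near both endpoints.

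\emph{Contradiction argument.} Suppose \(h(a)<p(a)\) somewhere. Then there exist \(u_i<b<c<u_{i-1}\) with
\[
h(b)=p(b),\ h'(b)\le p'(b), \qquad h(c)=p(c),\ h'(c)\ge p'(c).
\]
At such crossing points we replace \(h\) by \(p\) inside the formula for \(h'\), and multiply \(h'-p'\) by the positive factor \(2\d p(a)\). This gives the function
\[
g(a)\coloneqq 2\l_i(u_{i-1}-a)^{1/2}(a-u_i)^{1/2}-(a^2-s^2)-\d\l_i^2\big[(u_{i-1}-a)-(a-u_i)\big]\,\frac{(u_{i-1}-a)^{1/2}(a-u_i)^{1/2}}{(u_{i-1}-a)^{1/2}(a-u_i)^{1/2}},
\]
which simplifies to
\[
g(a)=2\l_i\sqrt{(u_{i-1}-a)(a-u_i)}-(a^2-s^2)-\d\l_i^2(u_{i-1}+u_i-2a),
\]
since \(p p'=\frac{\l_i^2}{2}(u_{i-1}+u_i-2a)\). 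Then \(g(b)\le0\le g(c)\). At the endpoints,
\[
g(u_i)=s^2-u_i^2-\d\l_i^2(u_{i-1}-u_i)>0, \qquad g(u_{i-1})=-(u_{i-1}^2-s^2)+\d\l_i^2(u_{i-1}-u_i)<0.
\]
Both signs follow exactly from the two endpoint comparisons above (by the choice of \(\l_i\) and the asymmetry \(u_{i-1}-s>s-u_i\)).

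\emph{Main obstacle: the shape of \(g\).} We have \(g''(a)=2\l_i\big(\sqrt{(u_{i-1}-a)(a-u_i)}\big)''-2\). The square-root term is concave, since it is a semicircle-type function, so \(g\) is strictly concave. A concave \(g\) with \(g(u_i)>0\ge g(b)\) and \(g(c)\ge0>g(u_{i-1})\), where \(b<c\), is impossible: concavity forces \(g(b)\ge\min(g(u_i),g(c))\ge0\), hence \(g(b)=0\), and then strict concavity contradicts \(g(c)\ge 0\) together with the endpoint values. (Concavity of \(g\) gives a contradiction directly once the sign pattern is \(+,\le0,\ge0,-\).) The delicate step I expect is verifying the endpoint sign \(g(u_{i-1})<0\). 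This needs the strict inequality \(u_{i-1}-s>s-u_i\), which must be supplied carefully, possibly via the bound \(u_{i-1}\le u_0\le 1.15s\) from \eqref{u0-upper}. I also need to double-check the direction of the inequalities for the crossing points, since here \(p\) is a lower barrier rather than an upper one.
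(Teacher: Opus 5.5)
Your proposal is correct and matches the paper's proof essentially step for step. It uses the same barrier $p$, the same comparison of the square-root rates at $u_i$ and $u_{i-1}$, and the same crossing points $b<c$. Your $g$ is exactly twice the paper's $g$, and its strict concavity (a semicircle term minus a parabola plus a linear term) gives the contradiction.

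The asymmetry $u_{i-1}-s>s-u_i$ that you flag is indeed the one external input. The paper also takes it as given from the sketch before the proposition. That sketch combines the bound $u_0\le 1.15s$ with Case 2 of Proposition \ref{prop:key} (using $\lbar_i=3/10$) and the effective energy, so the bound $u_0\le 1.15s$ alone does not supply it.
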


\begin{pf}
The proof relies on the contradiction argument introduced above.
To this end, we first consider two functions in \(a=\util\) as follows: 
\[
h(a) \coloneqq \util' = \util_\x, \qquad
p(a) \coloneqq \l_i (u_{i-1}-a)^\frac{1}{2} (a-u_i)^\frac{1}{2}.
\]
The desired inequality \eqref{inc-ineq} is equivalent to
\begin{equation} \label{inc-ineq2}
h(a) \ge p(a), \quad \forall a \in (u_i, u_{i-1}).
\end{equation}
We proceed to investigate the two functions at the endpoints.
First of all, it is trivial that 
\[
h(u_i)=p(u_i)=0, \qquad
h(u_{i-1})=p(u_{i-1})=0,
\]
as \(\util(\x)\) has its local extrema at \(\x_i\) and \(\x_{i-1}\).
Then, we recall \eqref{hp}:
\[
h'(a) = \frac{d\util_\x}{d\util} = \frac{1}{\d} \Big[1 - \frac{1}{2h(a)}(a-s)(a+s)\Big].
\]
and we observe 
\[
p'(a)=
-\frac{\l_i}{2} (u_{i-1}-a)^{-\frac{1}{2}} (a-u_i)^\frac{1}{2}
+\frac{\l_i}{2} (u_{i-1}-a)^\frac{1}{2} (a-u_i)^{-\frac{1}{2}}.
\]
Both \(h'(a)\) and \(p'(a)\) diverge when \(a \to u_i\) and \(a \to u_{i-1}\).
The divergence rate of \(p'(a)\) is as follows: 
\[
\lim_{a\to u_i+} p'(a)(a-u_i)^\frac{1}{2} = \frac{\l_i}{2} (u_{i-1}-u_i)^\frac{1}{2}, \qquad
\lim_{a\to u_{i-1}-} p'(a)(u_{i-1}-a)^\frac{1}{2} = -\frac{\l_i}{2} (u_{i-1}-u_i)^\frac{1}{2}.
\]
To find the divergence rate of \(h'(a)\), we apply L'Hôpital's rule and observe
\[
\lim_{a\to u_i+} \frac{h(a)}{(a-u_i)^\frac{1}{2}}
=\lim_{\x\to \x_i+} \frac{\util_\x}{(\util-u_i)^\frac{1}{2}}
=\lim_{\x\to \x_i+} \frac{2(\util-u_i)^\frac{1}{2} \util_{\x\x}}{\util_\x}.
\]
Since it holds by \eqref{t0} that \(\util_{\x\x}(\x_i)= \frac{s^2-u_i^2}{2\d}\), we obtain
\[
\lim_{a\to u_i+} \frac{h(a)}{(a-u_i)^\frac{1}{2}}
=\sqrt{\frac{s^2-u_i^2}{\d}}.
\]
It follows that
\begin{equation} \label{hp-ui}
\lim_{a\to u_i+} h'(a)(a-u_i)^\frac{1}{2}
=\frac{1}{2}\sqrt{\frac{s^2-u_i^2}{\d}}.
\end{equation}
Likewise, we obtain
\begin{equation} \label{hp-ui-1}
\lim_{a\to u_{i-1}-} h'(a)(u_{i-1}-a)^\frac{1}{2}
=-\frac{1}{2}\sqrt{\frac{u_{i-1}^2-s^2}{\d}}.
\end{equation}
We now compare the derivatives \(h'\) and \(p'\).
Since we have
\[
\frac{s^2-u_i^2}{\d}
>\frac{s(s^2-u_i^2)}{A}
=\frac{1}{A} \frac{s(s^2-u_i^2)}{(u_{i-1}-u_i)} (u_{i-1}-u_i)
=\l_i^2 (u_{i-1}-u_i),
\]
it follows that
\[
\lim_{a\to u_i+} h'(a)(a-u_i)^\frac{1}{2}
=\frac{1}{2}\sqrt{\frac{s^2-u_i^2}{\d}}
> \frac{\l_i}{2} (u_{i-1}-u_i)^\frac{1}{2}
=\lim_{a\to u_i+} p'(a)(a-u_i)^\frac{1}{2}.
\]
Moreover, using
\[
\frac{u_{i-1}^2-s^2}{\d}
>\frac{s(u_{i-1}^2-s^2)}{A}
>\frac{s(s^2-u_i^2)}{A}
=\frac{1}{A} \frac{s(s^2-u_i^2)}{(u_{i-1}-u_i)} (u_{i-1}-u_i)
=\l_i^2 (u_{i-1}-u_i),
\]
we obtain
\[
-\lim_{a\to u_{i-1}-} h'(a)(u_{i-1}-a)^\frac{1}{2}
=\frac{1}{2}\sqrt{\frac{u_{i-1}^2-s^2}{\d}}
>
\frac{\l_i}{2} (u_{i-1}-u_i)^\frac{1}{2}
=-\lim_{a\to u_{i-1}-} p'(a)(u_{i-1}-a)^\frac{1}{2}.
\]

We suppose that there exists \(a \in (u_i,u_{i-1})\) such that \(h(a)<p(a)\) and show that this leads to a contradiction.
From the observations above, we choose two points \(b\) and \(c\) with \(u_i<b<c<u_{i-1}\) which satisfy
\begin{align*}
&h(b)=p(b), &&h'(b)-p'(b) \le 0\\
&h(c)=p(c), &&h'(c)-p'(c) \ge 0.
\end{align*}
We then consider a function \(g:[u_i,u_{i-1}]\to\RR\) which is given by
\[
g(a) \coloneqq
\l_i(u_{i-1}-a)^\frac{1}{2}(a-u_i)^\frac{1}{2}
-\frac{1}{2}(a-s)(a+s)
+\frac{1}{2}\l_i^2\d(2a-u_i-u_{i-1}).
\]
Note that when \(a=b\) or \(a=c\), we have 
\[
h'(a)-p'(a)
=\frac{1}{\d}\Big[1-\frac{1}{2p(a)}(a-s)(a+s)\Big]
+\frac{\l_i}{2}(u_{i-1}-a)^{-\frac{1}{2}} (a-u_i)^\frac{1}{2}
-\frac{\l_i}{2}(u_{i-1}-a)^\frac{1}{2} (a-u_i)^{-\frac{1}{2}}.
\]
This implies that 
\begin{align*}
g(b) &= \l_i \d (h'(b)-p'(b)) (u_{i-1}-b)^\frac{1}{2} (b-u_i)^\frac{1}{2} \le0,\\
g(c) &= \l_i \d (h'(c)-p'(c)) (u_{i-1}-c)^\frac{1}{2} (c-u_i)^\frac{1}{2} \ge0.
\end{align*}
At the endpoints, we also have 
\[
g(u_i)
= \frac{1}{2} (s^2-u_i^2) + \frac{\l_i^2\d}{2} (u_i-u_{i-1})
= \frac{s^2-u_i^2}{2} \Big(1-\frac{\d s}{A}\Big) >0,
\]
and 
\[
g(u_{i-1})
= \frac{1}{2} (s^2-u_{i-1}^2) + \frac{\l_i^2\d}{2} (u_{i-1}-u_i)
= -\frac{1}{2} \Big( (u_{i-1}^2-s^2) -\frac{\d s}{A} (s^2 - u_i^2) \Big) <0.
\]
On the other hand, we observe
\[
g''(a)=-1 -\frac{\l_i}{4}
(u_{i-1}-a)^{-\frac{3}{2}}(a-u_i)^{-\frac{3}{2}} (u_{i-1}-u_i)^2<0.
\]
In conclusion, the concave function \(g\) satisfies 
\[
g(u_i) > 0, \qquad
g(b) \le 0, \qquad
g(c) \ge 0, \qquad
g(u_{i-1}) < 0, \qquad
(u_i<b<c<u_{i-1})
\]
which gives a contradiction.
This completes the proof of \eqref{inc-ineq2} and \eqref{inc-ineq}.
\end{pf}

\vspace{2mm}
Using the approximation in Proposition \ref{prop:inc}, we now prove \eqref{inc-decay} in Theorem \ref{thm:shock}.

\paragraph{Proof of \eqref{inc-decay} in Theorem \ref{thm:shock}.}
We recall \eqref{t1}-\eqref{t2} and use Proposition \ref{prop:inc} to find that
\begin{align*}
E(\x_{i-1})-E(\x_i)
&=\frac{1}{6}(u_{i-1}-u_i)(u_{i-1}^2+u_{i-1}u_i+u_i^2-3s^2)
=\int_{\x_i}^{\x_{i-1}} (\util')^2 d\x\\
&\ge \l_i \int_{\x_i}^{\x_{i-1}} (u_{i-1}-\util)^\frac{1}{2} (\util-u_i)^\frac{1}{2} \util' d\x\\
&= \l_i \int_{u_i}^{u_{i-1}} (u_{i-1}-\util)^\frac{1}{2} (\util-u_i)^\frac{1}{2} d\util
= \frac{\pi}{8} \l_i (u_{i-1}-u_i)^2.
\end{align*}
This is equivalent to 
\[
(u_{i-1}^2+u_{i-1}u_i+u_i^2-3s^2)
\ge \frac{3}{4}\pi \sqrt{\frac{1}{A}} \sqrt{s(s^2-u_i^2)} (u_{i-1}-u_i)^\frac{1}{2}.
\]
Letting \(u_{i-1}=(1+\a)s\) and \(u_i=(1-\b)s\), we rewrite the above inequality as follows:
\[
3(\a-\b)+\a^2+\b^2-\a\b \ge \frac{3}{4}\pi \sqrt{\frac{1}{A}} \sqrt{2\b-\b^2} \sqrt{\a+\b}.
\]
Then, we set \(\r\coloneqq\frac{\a}{\b}=\frac{u_{i-1}-s}{s-u_i}>1\) and find that
\begin{equation} \label{inc-ui0}
3(\r-1)+(\r^2-\r+1) \frac{\a}{\r}
\ge \frac{3}{4}\pi \sqrt{\frac{1}{A}}\sqrt{2-\frac{\a}{\r}}\sqrt{\r+1}.
\end{equation}
We now exploit the upper bound on \(u_0\) in \eqref{u0-upper} to provide an upper bound on \(\a\).
We choose \(\a_0>0\) such that \(u_0 \le (1+\a_0)s\).
Then the upper bound on \(u_0\) in \eqref{u0-upper} determines an admissible choice of \(\a_0\), and it follows that \(\a \le \a_0\).
However, since the \(A\)-dependence of the bound \eqref{u0-upper} is nontrivial and highly nonlinear, we restrict our analysis to several selected values of \(A\), namely \(A=1/3,1/2,2/3,3/4,1\), and work with the corresponding bounds.
In each case, the explicit bounds are given as follows:
\begin{equation} \label{u0-exp}
u_0 \le
\begin{cases}
1.03s, & A = 1/3,\\
1.0601s, & A = 1/2,\\
1.092s, & A = 2/3,\\
1.11s, & A = 3/4,\\
1.15s, & A = 1,
\end{cases} \qquad \qquad
\a_0 =
\begin{cases}
0.03, & A = 1/3,\\
0.0601, & A = 1/2,\\
0.092, & A = 2/3,\\
0.11, & A = 3/4,\\
0.15, & A = 1.
\end{cases}
\end{equation}
Since \(\r^2-\r+1 \ge 0\) and \(0<\a\le\a_0\), \eqref{inc-ui0} yields that 
\begin{equation} \label{inc-ui}
3(\r-1)+(\r^2-\r+1) \frac{\a_0}{\r}
\ge \frac{3}{4}\pi \sqrt{\frac{1}{A}}\sqrt{2-\frac{\a_0}{\r}}\sqrt{\r+1},
\end{equation}
which fails when \(\r\) is close to \(1\).
To obtain the desired decay rate \(\r_*\), we examine this inequality more closely.
We define a function \(l(\r)\) as the difference between the left- and right- hand sides: 
\[
l(\r)\coloneqq 3(\r-1)+(\r^2-\r+1) \frac{\a_0}{\r}
- \frac{3}{4}\pi \sqrt{\frac{1}{A}}\sqrt{2-\frac{\a_0}{\r}}\sqrt{\r+1}.
\]
For each of the selected \(A\), elementary calculations show that \(l(\r)\) is increasing for \(\r>1\); therefore, the inequality \eqref{inc-ui} holds if \(\r\) is greater than the (unique) solution of \(l(\r)=0\), and fails otherwise.
The inequality \eqref{inc-ui} indeed fails for each value of \(A\) when we substitute the corresponding \(\a\) from \eqref{u0-exp} and \(\r_*\) from Table \ref{tab:decay}.
Therefore, the decay rate is larger than \(\r_*\), which proves \eqref{inc-decay}.
\qed

\vspace{2mm}
The idea of the proof is similar to the case of the increasing intervals, while it is technically a bit more complicated---in particular, it requires a bootstrapping argument.
We leave the proof of \eqref{dec-decay} in Theorem \ref{thm:shock} in the end of the paper in Appendix \ref{appendix:dec}.

\section{Proof of Theorem \ref{thm:main}: Contraction Property} \label{sec:main}
\setcounter{equation}{0}
This section is devoted to the proof of Theorem \ref{thm:main}, which establishes the \(L^2\)-contraction property for viscous-dispersive shocks.
As discussed in Section \ref{sec:idea}, the proof relies on a Poincar\'e-type inequality (Lemma \ref{KV_lemma}) and to make this applicable, we develop an inductive argument.
We also introduce the change of variables in \eqref{ztu}; accordingly, the inequalities \eqref{ineq_00}, \eqref{ineq_03} and \eqref{ineq_04} are required in order to exploit the diffusion term.
In addition, the inductive argument necessitates suitable \(L^2\) estimates for the shock profile.
Each of these ingredients is established in the subsequent subsections: the relevant inequalities are derived in Section \ref{sec:key}, while the \(L^2\) estimates are obtained in Section \ref{section_l2}.

\vspace{2mm}
In what follows, we consider only the case \(A=\frac{1}{2}\), for which the following bounds are available: 
\begin{equation} \label{A1/2}
u_0 \le (1+r)s \coloneqq 1.0601s, \qquad
\frac{u_{i-1}-s}{s-u_i} \ge \r_* \coloneqq 4.64, \qquad
\frac{s-u_i}{u_{i+1}-s} \ge \r^* \coloneqq 4.77,
\end{equation}
for each odd \(i\in\NN\).
These bounds correspond to the structural properties of the shock identified in Theorem \ref{thm:shock} and will be used crucially in the subsequent analysis.

\subsection{Key Inequality for Each Monotonic Interval} \label{sec:key}
In this subsection, we show the inequalities \eqref{ineq_00}, \eqref{ineq_03} and \eqref{ineq_04} on each maximal interval over which the shock profile is monotone.
We refer to these inequalities as the key inequalities, since they serve as Jacobian estimates associated with the change of variables in \eqref{ztu}.
They play a crucial role in establishing \(L^2\) estimates and are stated in the following proposition.

\begin{proposition} \label{prop:key}
The following holds: for each \(i\in\NN\cup\{0\}\) even,
\begin{equation} \label{key:dec}
-\util'(\x) \ge \lbar_i(u_i-\util(\x))(\util(\x)-u_{i-1}), \qquad \forall \x\in(\x_i,\x_{i-1}),
\end{equation}
and for each \(i\in\NN\) odd, 
\begin{equation} \label{key:inc}
\util'(\x) \ge \lbar_i(u_{i-1}-\util(\x))(\util(\x)-u_i), \qquad \forall \x\in(\x_i,\x_{i-1}),
\end{equation}
where \(\x_{-1}=+\infty\) and \(u_{-1}=\util(\x_{-1})=-s\), and \(\lbar_i\) are given by 
\begin{equation} \label{key:l}
\lbar_0 = 0.355, \qquad
\lbar_1 = 9.60, \qquad
\lbar_{2n} = (1.94) \r_*^{2n}, \qquad
\lbar_{2n+1} = (0.51)^{-1} \r_*^{2n+1}.
\end{equation}
\end{proposition}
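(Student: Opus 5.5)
The plan is to prove each inequality with the same contradiction argument used for Lemma \ref{lem:RM} and Propositions \ref{prop:inc}--\ref{prop:RM}, now with a parabola as the comparison curve. On a monotone interval \((\x_i,\x_{i-1})\) we parametrize by \(a=\util\) and set \(h(a)=\util_\x\). In the decreasing case we put \(p(a)=-\lbar_i(u_i-a)(a-u_{i-1})\); in the increasing case \(p(a)=\lbar_i(u_{i-1}-a)(a-u_i)\). The claim then becomes \(h\le p\), respectively \(h\ge p\), on the open interval.

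\textbf{Endpoints.} Both \(h\) and \(p\) vanish at the endpoints. Near a finite extremum, \(h\) behaves like a square root, by \eqref{hp-ui}--\eqref{hp-ui-1}, while \(p\) is linear. So \(|h|\) dominates \(|p|\) near each interior extremum automatically, and no constraint on \(\lbar_i\) arises there. For \(i=0\) the left endpoint is \(u_{-1}=-s\), reached as \(\x\to+\infty\). There \(h'(-s)=-\frac{2s}{1+\sqrt{1+4\d s}}\) by \eqref{hpns}, while \(p'(-s)=-\lbar_0(u_0+s)\). Since \(\d s<\frac12\) and \(u_0\le1.0601s\), we get \(\frac{2s}{1+\sqrt3}\approx0.732s>0.355\cdot2.0601s\approx0.731s\). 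This marginal check is what fixes \(\lbar_0=0.355\).

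\textbf{Case \(i=0\).} The cleanest route avoids repeating the argument. Proposition \ref{prop:RM} gives \(-\util'\ge\l\sqrt s(u_0-\util)^{1/2}(\util+s)+\m(u_0-\util)(\util+s)\). Since \((u_0-\util)^{1/2}\ge(u_0-\util)/(u_0+s)^{1/2}\), this yields \eqref{key:dec} with constant \(\l\sqrt s(u_0+s)^{-1/2}+\m=\frac{2s}{(1+\sqrt{1+4A})(u_0+s)}\). For \(A=\frac12\) this is at least \(0.355\) after inserting \(u_0\le1.0601s\) (using \(s\)-normalization implicit in the constants).

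\textbf{Cases \(i\ge1\).} The same square-root trick applies to Proposition \ref{prop:inc} and its decreasing analogue (Appendix, Proposition \ref{prop:dec}). For \(a\) between the two extrema we have \(\sqrt{(u_{i-1}-a)(a-u_i)}\ge(u_{i-1}-a)(a-u_i)/\frac{u_{i-1}-u_i}{2}\), since each factor is at most the interval length. This gives \(\lbar_i\ge 2\l_i/(u_{i-1}-u_i)\). With \(\l_i=\sqrt{2s(s^2-u_i^2)/(u_{i-1}-u_i)}\) we obtain
\[
\lbar_i\gtrsim \frac{2\sqrt{2s\cdot s(s-u_i)}}{(u_{i-1}-u_i)^{3/2}}.
\]

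To make this explicit, we use the decay rates \eqref{A1/2} iterated from \(u_0-s\le0.0601s\). This gives \(s-u_i\), \(u_{i-1}-s\le 0.0601 s\,\r_*^{-(i-1)}\) (up to the slightly larger \(\r^*\)), so \(u_{i-1}-u_i\le(1+\r_*^{-1})(u_{i-1}-s)\). The ratio is smallest when \(s-u_i\) is as large as allowed relative to \(u_{i-1}-u_i\). Writing \(s-u_i=\b\) and \(u_{i-1}-s\ge\r_*\b\), the bound scales like \(\b^{1/2}/\b^{3/2}=\b^{-1}\). Combined with \(\b\lesssim\r_*^{-i}\), this yields growth like \(\r_*^{i}\), consistent with \eqref{key:l}.

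The main obstacle is this last step. The \(\b^{-1}\) scaling needs an upper bound on \(\b\), and the constant has to be tracked using both \(\r_*\) and \(\r^*\) and the base \(0.0601\) to reach \(1.94\), \((0.51)^{-1}\) and \(\lbar_1=9.6\). If the crude square-root-to-linear comparison loses too much, the fallback is to rerun the concavity contradiction directly with the parabola. In that version \(g(a)=\) (numerator of \(h'-p'\) at crossing points) is a polynomial, and its sign change pattern can be controlled as in Proposition \ref{prop:inc}.
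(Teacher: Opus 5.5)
Your $i=0$ argument is correct and shorter than the paper's. Since $0\le u_0-\util\le u_0+s$ on $(\x_0,\infty)$, Proposition \ref{prop:RM} gives \eqref{key:dec} for $i=0$ with constant
\[
\lambda\sqrt{s}\,(u_0+s)^{-1/2}+\mu=\frac{2s}{(1+\sqrt{1+4A})(u_0+s)} .
\]
This is exactly the quantity in the paper's condition \eqref{RM-keykey}, so you obtain $\lbar_0=0.355$ without the separate convexity argument of Case 1. One small correction: in your later step, the factor $\frac{u_{i-1}-u_i}{2}$ comes from AM--GM, $\sqrt{xy}\le\frac{x+y}{2}$, not from each factor being at most the interval length.

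For $i\ge1$ there is a genuine gap. Take $A=\frac12$, odd $i$, and set $\alpha=u_{i-1}-s$, $\beta=s-u_i$. The constant your reduction produces is
\[
\frac{2\lambda_i}{u_{i-1}-u_i}=\frac{2\sqrt{2s(s-u_i)(s+u_i)}}{(u_{i-1}-u_i)^{3/2}}\approx\frac{4s\sqrt{\beta}}{(\alpha+\beta)^{3/2}} .
\]
For fixed $\alpha$ this is increasing in $\beta$ on $\beta<\alpha/2$ and tends to $0$ as $\beta\to0$. Since $\rho_*>2$ puts you in that range, the worst case is the \emph{smallest} $\beta$. Your claim that the ratio is smallest when $s-u_i$ is as large as allowed is therefore backwards.

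To reach $9.60$ or $(0.51)^{-1}\rho_*^{i}$, you would need a lower bound of the form $s-u_i\gtrsim\lbar_i^2(u_{i-1}-u_i)^3/s^2$. That is an \emph{upper} bound on the decay ratio $\frac{u_{i-1}-s}{s-u_i}$. Theorem \ref{thm:shock} only bounds these ratios from below, and they are not bounded above uniformly in the admissible range: near $\delta s=\frac14$ the linearized ratio $e^{\pi/\sqrt{4\delta s-1}}$ blows up.

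The loss is structural. $\lambda_i\propto\sqrt{s^2-u_i^2}$ because a square-root comparison curve must be dominated by $h$ at the small-amplitude endpoint. A parabola has finite slope there and is beaten by $h'=\pm\infty$ automatically, whatever the amplitude. For even $i$ there is a further obstruction: Proposition \ref{prop:dec} is only proved under \eqref{ass10}, which is exactly the assumption that fails when $u_i-s\ll s-u_{i-1}$.

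Your fallback, the direct contradiction argument with the parabola, is the paper's proof. However, you leave out the quantitative step that fixes $\lbar_i$, and it does not go ``as in Proposition \ref{prop:inc}'', where concavity of $g$ was automatic.

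For odd $i$ the relevant function is
\[
g(a)=2\lbar_i(u_{i-1}-a)(a-u_i)-(a^2-s^2)+2\lbar_i^2\delta(u_{i-1}-a)(a-u_i)(2a-u_{i-1}-u_i),
\]
a cubic with $g'''=-24\lbar_i^2\delta$. Concavity on $[u_i,u_{i-1}]$ therefore reduces to $g''(u_i)=-4\lbar_i-2+12\lbar_i^2\delta(u_{i-1}-u_i)<0$.

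For even $i\ge2$ one has $g'''=+24\lbar_i^2\delta$. The requirement becomes $-4\lbar_i+2+12\lbar_i^2\delta(u_i-u_{i-1})<0$, which confines $\lbar_i$ to a window.

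Both conditions are closed using only \emph{upper} bounds on the interval length, $|u_{i-1}-u_i|\le\rho_*^{-i}(\rho_*+1)rs$, together with $\delta s<\frac12$. This is precisely the information the decay rates supply, and it is how $9.60$, $(0.51)^{-1}\rho_*^{2n+1}$ and $(1.94)\rho_*^{2n}$ are chosen. For $i=1$, the condition $6\lbar_1^2\rho_*^{-1}(\rho_*+1)r<4\lbar_1+2$ is nearly saturated at $\lbar_1=9.60$.
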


\begin{pf}
The proof is based on the contradiction argument introduced in the present paper, and we split the proof into three cases: \(i=0\), odd \(i\), and even \(i\ge2\).

\vspace{2mm} \case{1} \(i=0\).
First of all, we define two functions in \(a=\util\):
\[
h(a)\coloneqq\util'=\util_\x, \qquad
p(a) \coloneqq \lbar_0(a-u_0)(a+s).
\]
Then, it is equivalent to show that 
\begin{equation} \label{ineq2}
h(a) \le p(a), \quad \forall a \in (-s,u_0).
\end{equation}
We proceed to analyze the behavior of the two functions at the endpoints.
It is trivial that 
\begin{equation} \label{h-p=0}
h(u_0)=p(u_0)=0, \qquad
h(-s)=p(-s)=0.
\end{equation}
Then, since \(p'(a)=\lbar_0(2a+s-u_0)\), we note that
\begin{equation} \label{hppp1-u0}
\lim_{a\to u_0-} h'(a) = +\infty > \lbar_0(u_0+s) = \lim_{a\to u_0-} p'(a).
\end{equation}
Moreover, since \(u_0\le 1.0601s\), \(\lbar_0\) satisfies
\begin{equation} \label{RM-keykey}
0.355 = \lbar_0
\le \frac{2}{1+\sqrt{3}} \cdot \frac{s}{u_0+s}
= \frac{2}{1+\sqrt{1+4A}} \cdot \frac{s}{u_0+s},
\end{equation}
and hence it follows that (recall \eqref{hpns})
\begin{equation} \label{hppp1-ns}
-h'(-s)
=\frac{2s}{1+\sqrt{1+4\d s}}
>\frac{2s}{1+\sqrt{1+4A}}
>\lbar_0(s+u_0)
=-p'(-s).
\end{equation}

We now establish \eqref{ineq2} by contradiction and assume that \(h(a)>p(a)\) for some \(a\in(-s,u_0)\).
Then, based on the above observations \eqref{h-p=0}, \eqref{hppp1-u0} and \eqref{hppp1-ns}, we choose two points \(b\) and \(c\) with \(-s<b<c<u_0\) such that
\begin{align*}
&p(b)=h(b), &&h'(b)-p'(b) \ge 0\\
&p(c)=h(c), &&h'(c)-p'(c) \le 0.
\end{align*}
To derive a contradiction, we define a function \(g\colon[-s,u_0]\to\RR\) as follows: 
\[
g(a) \coloneqq
2\lbar_0(u_0-a)
+(a-s)
-2\lbar_0^2\d(u_0-a)(2a+s-u_0).
\]
Then, since the following holds for \(a=b\) and \(a=c\), 
\[
h'(a)-p'(a)
=\frac{1}{\d}\Big[1-\frac{1}{2p(a)}(a-s)(a+s)\Big]-\lbar_0(2a+s-u_0)
=\frac{1}{\d}\Big[1-\frac{1}{2\lbar_0}\frac{a-s}{a-u_0}\Big]-\lbar_0(2a+s-u_0),
\]
it follows that 
\[
g(b)=2\lbar_0\d(u_0-b)\big(h'(b)-p'(b)\big)\ge0, \qquad
g(c)=2\lbar_0\d(u_0-c)\big(h'(c)-p'(c)\big)\le0.
\]
We also note that
\[
g(u_0) = u_0 -s >0.
\]
Moreover, using \eqref{RM-keykey}, we find that
\[
g(-s)
=2\lbar_0(u_0+s) -2s +2\lbar_0^2\d (u_0+s)^2 < 0.
\]
However, since \(g''(a)=8\lbar_0^2 \d>0\), the function \(g\) is strictly convex, which yields a contradiction.
This completes the proof of \eqref{key:dec} for the case \(i=0\).

\vspace{2mm}
\case{2} \(i\) odd.
To begin with, we consider the following two functions of \(a=\util\): 
\[
h(a)\coloneqq\util'=\util_\x, \qquad
p(a) \coloneqq \lbar_i(u_{i-1}-a)(a-u_i).
\]
It suffices to show that
\begin{equation} \label{inc:key2}
h(a) \ge p(a), \quad \forall a \in (u_i,u_{i-1}).
\end{equation}
To this end, we compare the two functions at the endpoints.
It is obvious that 
\[
h(u_i)=p(u_i)=0, \qquad h(u_{i-1})=p(u_{i-1})=0.
\]
Since \(p'(a)=-\lbar_i(2a-u_{i-1}-u_i)\), it follows that 
\[
\lim_{a\to u_i+} h'(a) = +\infty > \lbar_i(u_{i-1}-u_i) = p'(u_i), \qquad
\lim_{a\to u_{i-1}-} h'(a) = -\infty < -\lbar_i(u_{i-1}-u_i) = p'(u_{i-1}).
\]

We now turn to the proof of \eqref{inc:key2} by contradiction, i.e., we assume that there exists \(a\in(u_i,u_{i-1})\) such that \(h(a)<p(a)\).
Then, the above observations imply that there exist two points \(b\) and \(c\) with \(u_i<b<c<u_{i-1}\) such that 
\begin{align*}
&p(b)=h(b), &&h'(b)-p'(b) \le 0\\
&p(c)=h(c), &&h'(c)-p'(c) \ge 0.
\end{align*}
We consider a function \(g:[u_i,u_{i-1}]\to\RR\) which is defined by
\[
g(a) \coloneqq
2\lbar_i(u_{i-1}-a)(a-u_i)
-(a-s)(a+s)
+2\lbar_i^2\d(u_{i-1}-a)(a-u_i)(2a-u_{i-1}-u_i).
\]
Next, we determine the sign of \(g\) at the four points \(u_i\), \(b\), \(c\) and \(u_{i-1}\), and establish that \(g\) is concave.
As in the previous case, since the following holds for \(a=b\) and \(a=c\), 
\[
h'(b)-p'(b)
=\frac{1}{\d}\Big[1-\frac{1}{2p(b)}(b-s)(b+s)\Big]+\lbar_i(2a-u_{i-1}-u_i),
\]
it follows that 
\begin{align*}
&g(b)=2\lbar_i\d(u_{i-1}-b)(b-u_i)\big(h'(b)-p'(b)\big)\le0,\\
&g(c)=2\lbar_i\d(u_{i-1}-c)(c-u_i)\big(h'(c)-p'(c)\big)\ge0.
\end{align*}
In addition, it simply follows that
\[
g(u_i) = -(u_i^2-s^2)>0, \qquad
g(u_{i-1}) = -(u_{i-1}^2-s^2)<0.
\]
On the other hand, since \(g'''(a)=-24\lbar_i^2 \d<0\), we claim that 
\[
g''(u_i) = -4\lbar_i-2+12\lbar_i^2\d(u_{i-1}-u_i) <0,
\]
so that \(g''<0\) on \((u_i,u_{i-1})\), i.e., the function \(g\) is concave.
This can be verified as follows: since 
\begin{align*}
u_{i-1}-u_i=(u_{i-1}-s)+(s-u_i)
&\le (\r^*)^{-1} (s-u_{i-2}) + (\r_*)^{-1} (u_{i-1}-s)\\
&\le (\r_*)^{-1} (u_{i-1}-u_{i-2})
\le (\r_*)^{-(i-1)} (u_0-u_1)
\le (\r_*)^{-i} (\r_*+1) rs,
\end{align*}
the constants \(\lbar_i\) given in \eqref{key:l} satisfy
\[
12\lbar_i^2 \d (u_{i-1}-u_i)
\le 12\lbar_i^2 \d s (\r_*)^{-i} (\r_*+1) r
< 6 \lbar_i^2 (\r_*)^{-i} (\r_*+1) r
\le 4\lbar_i+2.
\]
In conclusion, the concavity of \(g\) contradicts 
\[
g(u_i) > 0, \qquad
g(b) \le 0, \qquad
g(c) \ge 0, \qquad
g(u_{i-1}) < 0, \qquad
(u_i<b<c<u_{i-1}),
\]
which completes the proof of \eqref{key:dec} for the case \(i\) odd.

\vspace{2mm}
\case{3} \(i\) even.
The proof is analogous to that for odd \(i\) and is deferred to Appendix \ref{appendix:dec}.
\end{pf}

\subsection{\(L^2\) Estimates} \label{section_l2}
The remaining ingredient in the proof of the contraction property is to obtain suitable \(L^2\) estimates for the inductive argument.
First, we introduce the following notation (see Figure \ref{fig_l2}).
For each \(i\in\NN\), we set \(J_i \coloneqq (\x_i,\x^i)\) where \(\x^i\in(\x_{i-1},\x_{i-2})\) and \(\util(\x^i)=u_i\). (We also recall \(\x_{-1}=+\infty\).)
In addition, we consider the following decomposition of \(J_i\) as follows: 
\begin{equation} \label{Ji-decomp}
J_i = J_{i,1} \cup J_{i,2} \cup J_{i,3} \coloneqq (\x_i,\x_*) \cup (\x_*,\x^*) \cup (\x^*,\x^i),
\end{equation}
where the two points \(\x_*\) and \(\x^*\) are given by \(\x_i<\x_*<\x^*<\x^i\) and \(\util(\x_*)=\util(\x^*)= \frac{u_i+u_{i-1}}{2}\).
In fact, \(\x_*\) and \(\x^*\) have \(i\)-dependence, but for simplicity, we omit it without confusion.


\begin{figure}[htp] \centering
	\includegraphics[width=.7\textwidth]{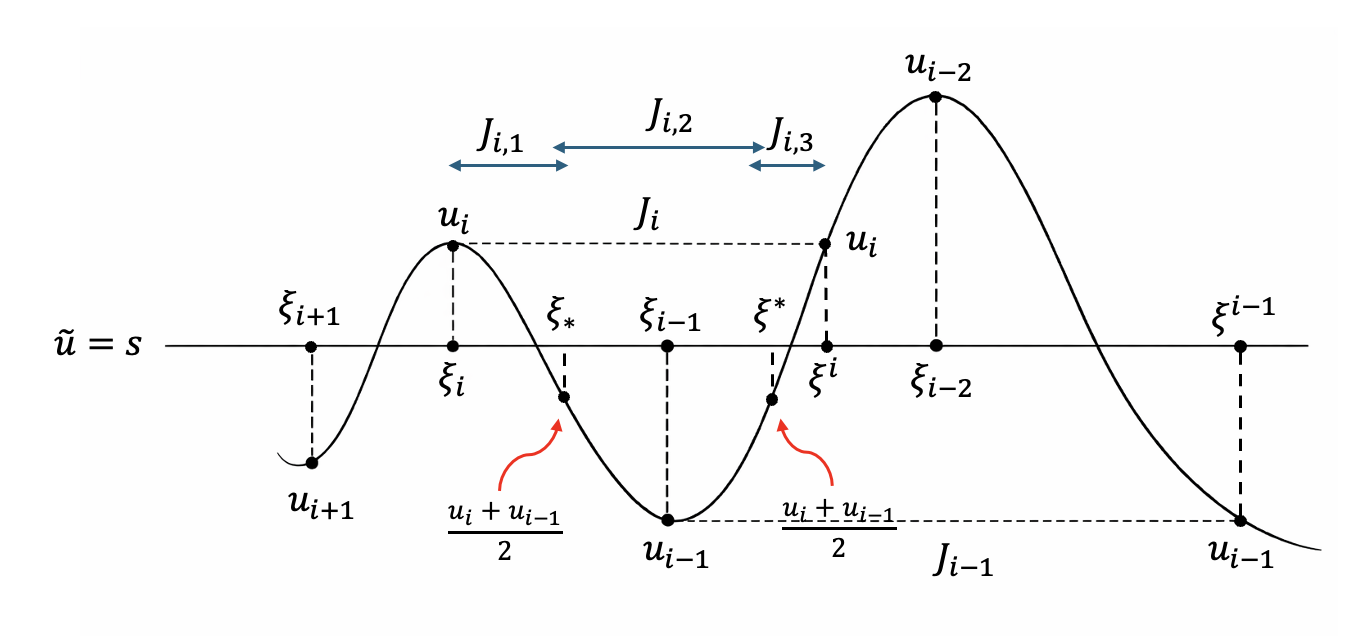}
	\caption{Illustration of \(J_i\), when \(i\) is even. The corresponding picture for odd \(i\) is similar.} \label{fig_l2}
\end{figure}

\begin{proposition} \label{prop:L2}
The viscous-dispersive shock profiles satisfies the following \(L^2\) estimate:
\begin{equation} \label{L2B}
\frac{1}{\abs{u_i-u_{i-1}}}\int_{J_i} (\util-u_i)^2 d\x
\le
\begin{cases}
0.178, & i=1,\\
0.81(\r_*)^{-i}, & i\ge2 \text{ even},\\
0.82(\r_*)^{-i}, & i\ge3 \text{ odd}.
\end{cases}
\end{equation}
Moreover, let \(\x_s\in(\x_1,\x_0)\) be the unique point satisfying \(\util(\x_s)=s\).
Then the following holds:
\begin{equation} \label{L2B-S}
\int_{-\infty}^{\x_s} (\util-s)^2 d\x \le 0.001s.
\end{equation}
\end{proposition}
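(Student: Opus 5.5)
The plan is to estimate \(\int_{J_i}(\util-u_i)^2d\x\) by converting each piece of the integral to an integral in the variable \(a=\util\) on the monotone intervals. The Jacobian lower bounds of Proposition \ref{prop:inc}, its decreasing analogue (Proposition \ref{prop:dec}) and Proposition \ref{prop:RM} then turn each piece into an explicit algebraic integral. The decay rates \eqref{A1/2} finally convert everything into powers of \(\r_*\).

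Fix \(i\) and split \(J_i\) as in \eqref{Ji-decomp}, or more conveniently along the monotone pieces \((\x_i,\x_{i-1})\) and \((\x_{i-1},\x^i)\). On \((\x_i,\x_{i-1})\), where \(\util\) is monotone between \(u_i\) and \(u_{i-1}\), write \(d\x = da/|\util'|\). For, say, odd \(i\), Proposition \ref{prop:inc} gives
\[
\int_{\x_i}^{\x_{i-1}}(\util-u_i)^2d\x\le \frac{1}{\l_i}\int_{u_i}^{u_{i-1}}\frac{(a-u_i)^{3/2}}{(u_{i-1}-a)^{1/2}}\,da=\frac{3\pi}{8\l_i}(u_{i-1}-u_i)^2,
\]
a Beta integral. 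On the second piece \((\x_{i-1},\x^i)\), \(\util\) returns from \(u_{i-1}\) to \(u_i\), and the integrand is bounded by \((u_{i-1}-u_i)^2\). The corresponding Jacobian bound on \((\x_{i-1},\x_{i-2})\) for the decreasing direction gives
\[
\int (\util-u_i)^2 d\x \le \l_{i-1}^{-1}\int_{u_i}^{u_{i-1}} \frac{(u_{i-1}-a)^2}{(a-u_i)^{1/2}(u_{i-2}-a)^{1/2}}\,da .
\]
Because \(u_i\) lies strictly inside \((u_{i-2},u_{i-1})\), this integral has no endpoint singularity. Inserting \(\l_i=\sqrt{s(s^2-u_i^2)/(A(u_{i-1}-u_i))}\) with \(s^2-u_i^2\approx 2s(s-u_i)\), and normalizing by \(|u_i-u_{i-1}|\), each piece becomes of order \((u_{i-1}-u_i)^{3/2}/(s\sqrt{s-u_i})\). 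Using \(u_{i-1}-u_i\le(\r_*)^{-(i-1)}(u_0-u_1)\) (as in Case 2 of Proposition \ref{prop:key}), together with \(u_0-s\le rs\) and the ratio bounds \(\r_*,\r^*\), this yields \(C(\r_*)^{-i}\). The numerical constants \(0.81\) and \(0.82\) then come from evaluating with \(A=1/2\), \(r=0.0601\), \(\r_*=4.64\), \(\r^*=4.77\).

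For \(i=1\), the second piece lies partly in \((\x_0,\infty)\), stopping at \(\x^1\) where \(\util=u_1\). There I would use Proposition \ref{prop:RM} (or \eqref{key:dec} with \(\lbar_0\)) as the Jacobian bound. That bound degenerates only like \((a+s)^{-1}\) near \(-s\), but \(u_1>-s\) is bounded away from \(-s\), since \(s-u_1\le (u_0-s)/\r_*\); this should give the explicit \(0.178\). For \eqref{L2B-S}, I would sum over all intervals left of \(\x_s\): on each monotone interval, \(\int(\util-s)^2d\x\le \l_j^{-1}\times\) a Beta integral of order \(|u_j-s|^{2}\cdot|u_{j-1}-u_j|^{1/2}/\sqrt{s(s-u_j)}\). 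This is bounded by a geometric series in \(\r_*^{-1}\) with first term controlled by \(r^{3/2}s\), and \(r^{3/2}\approx0.015\) times small constants should give \(0.001s\).

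The main obstacle is bookkeeping sharp enough constants. The \(\l_j\) depend on \(s^2-u_j^2\) for the increasing intervals but on \(u_j^2-s^2\) for the decreasing ones, so the ratio \((u_{j-1}-u_j)/(s-u_j)\) must be controlled by both \(\r_*\) and \(\r^*\). The half-singularities at the inner endpoints also need careful splitting, likely via the midpoint decomposition \eqref{Ji-decomp}: on \(J_{i,1}\) and \(J_{i,3}\) use the square-root Jacobian bounds, and on \(J_{i,2}\) use the quadratic bounds of Proposition \ref{prop:key}.
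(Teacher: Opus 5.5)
There is a genuine gap. The square-root Jacobian bounds you rely on cannot produce the factor $(\r_*)^{-i}$, nor the constant $0.178$, because their constants degenerate as the local amplitude ratio grows. Since $s^2-u_i^2\le 2s(s-u_i)$, the constant of Proposition \ref{prop:inc} satisfies $\l_i\le s\sqrt{2/(A(1+\r))}$, where $\r=\frac{u_{i-1}-s}{s-u_i}$. Your normalized first piece therefore obeys
\[
\frac{3\pi}{8}\,\frac{u_{i-1}-u_i}{\l_i}\ \ge\ \frac{3\pi}{8}\sqrt{\frac{A}{2}}\,\frac{u_{i-1}-u_i}{s}\,\sqrt{1+\r}.
\]
Theorem \ref{thm:shock} supplies only \emph{lower} bounds $\r\ge\r_*$ (resp.\ $\r^*$). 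No uniform upper bound can exist on the range \eqref{cond1}: at the linearized level the ratio is $e^{\pi/\sqrt{4\d s-1}}$, which blows up as $\d s\downarrow\frac14$. So your passage from the ratio bounds to $C(\r_*)^{-i}$ uses them in the wrong direction. For $i=1$, the right-hand side above cannot be bounded by $0.178$ from what is known ($u_0\le1.0601s$, $\r\ge\r_*$).

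Two further problems compound this. Proposition \ref{prop:dec} is proved only under \eqref{ass10}, i.e.\ $\r<10$. That is harmless for the decay rate, where larger ratios are trivially fine, but it cannot be assumed for an $L^2$ bound. Both defects equally undermine your geometric-series argument for \eqref{L2B-S}. Your fallback of using Proposition \ref{prop:key} on $J_{i,2}$ also fails: $J_{i,2}$ contains $\x_{i-1}$, where the quadratic bound only gives $(\lbar_i)^{-1}\int^{u_{i-1}}\frac{a-u_i}{u_{i-1}-a}\,da=\infty$. (Minor: for odd $i$ the integrand on $(\x_{i-1},\x^i)$ should be $(a-u_i)^2(u_{i-1}-a)^{-1/2}(a-u_{i-2})^{-1/2}$, which does have a singularity at $u_{i-1}$.)

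The paper avoids the square-root bounds entirely.

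\textbf{Pieces $J_{i,1}$ and $J_{i,3}$.} It applies the quadratic bounds of Proposition \ref{prop:key} only here. On these pieces $\util$ stays at distance at least $\frac12|u_{i-1}-u_i|$ from $u_{i-1}$, and on $J_{i,1}$ the weight $(\util-u_i)^2$ absorbs the vanishing at $u_i$. This gives $(\lbar_i)^{-1}(\log 2-\frac12)|u_{i-1}-u_i|$ and an analogous $(\lbar_{i-1})^{-1}$ term. The constants $\lbar_i\sim\r_*^{\,i}$ were derived from upper bounds on $|u_{i-1}-u_i|$ alone, so large ratios do no harm.

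\textbf{Middle piece $J_{i,2}$.} This is handled with no Jacobian at all. Multiply \eqref{t0} by $\util-s$ and integrate over the interval $J_s\supset J_{i,2}$ between consecutive zeros of $\util-s$; this gives $\int_{J_s}(\util')^2d\x=\frac{1}{2\d}\int_{J_s}(\util-s)^2(\util+s)\,d\x$. On $J_{i,2}$ one has $|\util-u_i|\le\frac{\r_*+1}{\r_*-1}|\util-s|$, which uses the ratio bound in the correct direction.

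\textbf{The bound \eqref{L2B-S}.} This is the same identity on $(-\infty,\x_s)$, combined with $\int_{-\infty}^{\x_s}(\util')^2d\x\le E(\x_0)-E(-\infty)$.

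One caution if you adopt this route. The paper bounds $\int_{J_s}(\util')^2$ via $\max_{J_s}|\util'|\le\frac12|u_{i-1}^2-s^2|$. This is justified on the side of $\x_{i-1}$ where $\util'$ has an interior critical point, but not on the part of $J_s$ beyond $\x_{i-1}$, where $\util''$ has a fixed sign. Integrate $\frac{\d}{2}\frac{d}{d\util}(\util')^2=\util'-\frac12(\util^2-s^2)$ from $u_{i-1}$ to $s$ along that branch. This gives $(\util')^2\ge\frac{(u_{i-1}-s)^2(u_{i-1}+2s)}{3\d}>\frac14(u_{i-1}^2-s^2)^2$ at that endpoint of $J_s$. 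So that step needs a separate estimate.
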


\begin{pf}
To show the desired \(L^2\) estimates, we first introduce the decomposition \eqref{Ji-decomp}: 
\[
\int_{J_i} (\util-u_i)^2 d\x
=\int_{J_{i,1}} (\util-u_i)^2 d\x
+\int_{J_{i,2}} (\util-u_i)^2 d\x
+\int_{J_{i,3}} (\util-u_i)^2 d\x
\eqqcolon \Jcal_{i,1}+\Jcal_{i,2}+\Jcal_{i,3}.
\]
In what follows, we derive an upper bound for each term, separately for odd and even \(i\).

\vspace{2mm}
\bci{\Jcal_{i,1}} 
If \(i\) is odd, we apply Proposition \ref{prop:key} to obtain
\begin{equation} \label{odd:J1}
\begin{aligned}
\Jcal_{i,1}
&\le (\lbar_i)^{-1} \int_{J_{i,1}}  \frac{(\util-u_i)\util'}{(u_{i-1}-\util)}d\x
=(\lbar_i)^{-1} \int_{u_i}^{\frac{u_i+u_{i-1}}{2}}  \frac{(\util-u_i)}{(u_{i-1}-\util)}d\util\\
&=(\lbar_i)^{-1} \int_{u_i}^{\frac{u_i+u_{i-1}}{2}} \Big(-1 + \frac{(u_{i-1}-u_i)}{(u_{i-1}-\util)}\Big) d\util
=(\lbar_i)^{-1} \Big(\log 2 -\frac{1}{2}\Big) (u_{i-1}-u_i).
\end{aligned}
\end{equation}
Likewise, when \(i\) is even, we again use Proposition \ref{prop:key} to find that 
\begin{equation} \label{even:J1}
\Jcal_{i,1} \le (\lbar_i)^{-1} \Big(\log 2 -\frac{1}{2}\Big) (u_i-u_{i-1}).
\end{equation}

\vspace{2mm}
\bci{\Jcal_{i,2}}
Let \(J_s\) be an (unique) interval contained in \(J_i\) and containing \(J_{i,2}\) such that \(\util=s\) at each endpoint of \(J_s\).
Notice that \(J_s\) has \(i\)-dependence, but for simplicity, we omit it without confusion.
To obtain the \(L^2\) estimate over \(J_{i,2}\), we begin by studying \(J_s\) which is larger than \(J_{i,2}\).

\vspace{2mm}
Thanks to \eqref{t0}, we observe
\begin{equation} \label{L2-stos}
\begin{aligned}
\int_{J_s} (\util')^2 d\x
&=-\int_{J_s} (\util-s)\util'' d\x
=-\frac{1}{\d}\int_{J_s} (\util-s)\Big(\util' -\frac{1}{2}\util^2+\frac{1}{2}s^2\Big)d\x\\
&=-\frac{1}{\d}\int_{J_s} (\util-s)(\util-s)_\x d\x
+\frac{1}{2\d}\int_{J_s} (\util-s)(\util^2-s^2)d\x\\
&=\frac{1}{2\d}\int_{J_s} (\util-s)^2(\util+s)d\x.
\end{aligned}
\end{equation}
It also follows from \eqref{t0} that 
\[
\max_{J_s} \abs{\util'} \le \frac{1}{2}\abs{u_{i-1}^2-s^2}.
\]
This implies that 
\[
2\d \int_{J_s} (\util')^2 d\x
\le \d \abs{u_{i-1}^2-s^2} \int_{J_s} \abs{\util'} d\x
= 2\d (u_{i-1}+s) (u_{i-1}-s)^2.
\]
Thus, summing up, we have
\begin{equation} \label{Js}
\int_{J_s} (\util-s)^2(\util+s)d\x
\le 2\d (u_{i-1}+s) (u_{i-1}-s)^2.
\end{equation}

We now show that the decay rates \(\r_*\) and \(\r^*\) in \eqref{A1/2} yield an upper bound on \(\Jcal_{i,2}\).
If \(i\) is odd, since \(\util+s\ge2s\) on \(J_s\), the above observation \eqref{Js} implies that 
\[
\int_{J_s} (\util-s)^2 d\x
\le \frac{1}{2s} \int_{J_s} (\util-s)^2 (\util+s) d\x
\le \frac{\d}{s} (u_{i-1}+s) (u_{i-1}-s)^2
<\frac{1}{2}(u_{i-1}+s) \frac{(u_{i-1}-s)^2}{s^2}
\]
and thus, the decay rate \(\r_*\) for the increasing intervals shows the following:
\begin{equation} \label{odd:J2}
\begin{aligned}
\int_{J_{i,2}} (\util-u_i)^2 d\x 
&\le \bigg(\frac{\frac{u_i+u_{i-1}}{2}-u_i}{\frac{u_i+u_{i-1}}{2}-s}\bigg)^2
\int_{J_{i,2}} (\util-s)^2 d\x\\
&\le \left(\frac{u_{i-1}-u_i}{(u_{i-1}-s)-(s-u_i)}\right)^2
\int_{J_s} (\util-s)^2 d\x\\
&< \frac{1}{2} \left(\frac{\r_*+1}{\r_*-1}\right)^2 (u_{i-1}+s) \frac{(u_{i-1}-s)^2}{s^2}.
\end{aligned}
\end{equation}
When \(i\) is even, we note that \(\util+s \ge 2s-\r_*^{-1}rs>1.987s\) and that 
\[
\int_{J_s} (\util-s)^2 d\x
< \frac{2}{1.987}\frac{\d}{s} (u_{i-1}+s)(s-u_{i-1})^2
< \frac{1}{1.987} (u_{i-1}+s) \frac{(s-u_{i-1})^2}{s^2}.
\]
Then, similar to the case of odd \(i\), the decay rate \(\r^*\) for the decreasing intervals yields the following:
\begin{equation} \label{even:J2}
\begin{aligned}
\int_{J_{i,2}} (\util-u_i)^2 d\x 
&\le \bigg(\frac{u_i-\frac{u_i+u_{i-1}}{2}}{s-\frac{u_i+u_{i-1}}{2}}\bigg)^2
\int_{J_{i,2}} (\util-s)^2 d\x\\
&\le \left(\frac{u_i-u_{i-1}}{(s-u_{i-1})-(u_i-s)}\right)^2
\int_{J_s} (\util-s)^2 d\x\\
&\le \frac{1}{1.987} \left(\frac{\r^*+1}{\r^*-1}\right)^2 (u_{i-1}+s) \frac{(s-u_{i-1})^2}{s^2}.
\end{aligned}
\end{equation}

\vspace{2mm}
\bci{\Jcal_{i,3}} 
To estimate \(\Jcal_{i,3}\), we use the key inequalities established in Proposition \ref{prop:key}.\\
When \(i\) is odd, using \eqref{key:dec}, we obtain 
\[
\Jcal_{i,3}
\le (\lbar_{i-1})^{-1} \int_{J_{i,3}} \frac{(\util-u_i)^2(-\util')}{(\util-u_{i-2})(u_{i-1}-\util)}d\x
= (\lbar_{i-1})^{-1} \int_{u_i}^{\frac{u_i+u_{i-1}}{2}} \frac{(\util-u_i)^2}{(\util-u_{i-2})(u_{i-1}-\util)}d\util.
\]
Notice that since the decay rate \(\r_*\) shows that \(u_i>\frac{u_{i-1}+u_{i-2}}{2}\), the denominator of the integrand is bounded from below on \([u_i,\frac{u_i+u_{i-1}}{2}]\), and its minimum is attained at \(\util=\frac{u_i+u_{i-1}}{2}\): 
\[
(\util-u_{i-2})(u_{i-1}-\util)
\ge \Big(\frac{1}{2}(u_i+u_{i-1})-u_{i-2}\Big) \Big(\frac{u_{i-1}-u_i}{2}\Big).
\]
Thus, it follows that 
\begin{equation} \label{odd:J3}
\begin{aligned}
\Jcal_{i,3}
&\le \frac{4 (\lbar_{i-1})^{-1}}{(u_i+u_{i-1}-2u_{i-2})(u_{i-1}-u_i)}
\int_{u_i}^{\frac{u_i+u_{i-1}}{2}} (\util-u_i)^2 d\util\\
&\le \frac{(\lbar_{i-1})^{-1}}{6\big((u_i-u_{i-2})+(u_{i-1}-u_{i-2})\big)} (u_{i-1}-u_i)^2.
\end{aligned}
\end{equation}
In the case of even \(i\), the similar argument yields 
\begin{equation} \label{even:J3}
\Jcal_{i,3}
\le \frac{(\lbar_{i-1})^{-1}}{6\big((u_{i-2}-u_i)+(u_{i-2}-u_{i-1})\big)} (u_i-u_{i-1})^2.
\end{equation}

Finally, we sum the above estimates so as to obtain the desired \(L^2\) bounds.
Due to the choice of \(\lbar_i\) in \eqref{key:l}, we need to consider four cases: \(i=1\), \(i=2\), odd \(i\ge3\), and even \(i\ge4\).

\case{1} If \(i=1\), using \eqref{odd:J1}, \eqref{odd:J2} and \eqref{odd:J3}, we find that
\begin{align*}
\frac{1}{u_0-u_1}\int_{J_1} (\util-u_1)^2 d\x
&\le (\lbar_1)^{-1} \Big(\log 2 -\frac{1}{2}\Big)
+\left(\frac{\r_*+1}{\r_*-1}\right)^2 \frac{(u_0+s)(u_0-s)}{2s^2}\\
&\qquad
+\frac{(\lbar_0)^{-1}(u_0-u_1)}{6\big((u_1+s)+(u_0+s)\big)}.
\end{align*}
Recalling \(\lbar_0=0.355\), \(\lbar_1=9.60\), \(\r_*=4.64\), \(u_0\le (1+r)s=1.0601s\) and \(s-u_1\le(\r_*)^{-1}rs<0.013s\), we obtain
\begin{align*}
\frac{1}{u_0-u_1}\int_{J_1} (\util-u_1)^2 d\x
&\le \frac{1}{9.60} \Big(\log 2 -\frac{1}{2}\Big)
+\left(\frac{5.64}{3.64}\right)^2 \frac{(2.0601)(0.0601)}{2}\\
&\qquad
+\frac{0.0731}{6(0.355)(3.987)}
<0.178.
\end{align*}

\case{2} If \(i=2\), using \eqref{even:J1}, \eqref{even:J2} and \eqref{even:J3}, we have
\begin{align*}
\frac{1}{u_2-u_1}\int_{J_2} (\util-u_2)^2 d\x
&\le (\lbar_2)^{-1} \Big(\log 2 -\frac{1}{2}\Big)
+\left(\frac{\r^*+1}{\r^*-1}\right)^2 \frac{(u_1+s)(s-u_1)}{(1.987)s^2}\\
&\qquad
+\frac{(\lbar_1)^{-1}(u_2-u_1)}{6\big((u_0-u_2)+(u_0-u_1)\big)}.
\end{align*}
Recall \(\lbar_1=9.60\), \(\lbar_2=(1.94)(4.64)^2\), \(\r^*=4.77\), \(s-u_1<0.013s\) and \(u_1+s<2s\).
We also note that 
\begin{equation} \label{J3-dnmt}
\frac{(u_0-u_2)+(u_0-u_1)}{u_2-u_1}
=\frac{2(u_0-u_1)}{u_2-u_1}-1 \ge 2\r_*-1 =8.28.
\end{equation}
Thus, it follows that
\begin{align*}
\frac{1}{u_2-u_1}\int_{J_2} (\util-u_2)^2 d\x
&\le \frac{(\r_*)^{-2}}{(1.94)} \Big(\log 2 -\frac{1}{2}\Big)
+\left(\frac{5.77}{3.77}\right)^2 \frac{2(0.013)}{1.987}
+\frac{1}{6(9.60)(8.28)}
<0.81(\r_*)^{-2}.
\end{align*}

\case{3} If \(i\ge3\) is odd, using \eqref{odd:J1}, \eqref{odd:J2} and \eqref{odd:J3}, we obtain
\begin{align*}
\frac{1}{u_{i-1}-u_i}\int_{J_1} (\util-u_i)^2 d\x
&\le (\lbar_i)^{-1} \Big(\log 2 -\frac{1}{2}\Big)
+\left(\frac{\r_*+1}{\r_*-1}\right)^2 \frac{(u_{i-1}+s)(u_{i-1}-s)}{2s^2}\\
&\qquad
+\frac{(\lbar_{i-1})^{-1}(u_{i-1}-u_i)}{6\big((u_i-u_{i-2})+(u_{i-1}-u_{i-2})\big)}.
\end{align*}
Note that \(\lbar_i=(0.51)^{-1}(\r_*)^i\), \(\lbar_{i-1}=(1.94)(\r_*)^{i-1}\), \(u_{i-1}+s\le u_2+s<2.01s\),
\[
u_{i-1}-s \le (\r_*)^{-(i-1)} (u_0-s),
\]
and 
\[
\frac{(u_i-u_{i-2})+(u_{i-1}-u_{i-2})}{(u_{i-1}-u_i)}
=-1 + \frac{2(u_{i-1}-u_{i-2})}{(u_{i-1}-u_i)}
\ge -1+2\r_*=8.28.
\]
Thus, we have 
\begin{align*}
&\frac{1}{u_{i-1}-u_i}\int_{J_1} (\util-u_i)^2 d\x\\
&\le 
\Big[(0.51)\Big(\log 2 -\frac{1}{2}\Big)
+\left(\frac{5.64}{3.64}\right)^2 \frac{(2.01)(0.0601)(4.64)}{2}
+\frac{4.64}{6(8.28)(1.94)}\Big] (\r_*)^{-i}
<0.82(\r_*)^{-i}.
\end{align*}

\case{4} If \(i\ge4\) is even, using \eqref{even:J1}, \eqref{even:J2} and \eqref{even:J3}, we have
\begin{align*}
\frac{1}{u_i-u_{i-1}}\int_{J_i} (\util-u_i)^2 d\x
&\le (\lbar_i)^{-1} \Big(\log 2 -\frac{1}{2}\Big)
+\left(\frac{\r^*+1}{\r^*-1}\right)^2 \frac{(u_{i-1}+s)(s-u_{i-1})}{(1.987)s^2}\\
&\qquad
+\frac{(\lbar_{i-1})^{-1}(u_{i-1}-u_i)}{6\big((u_i-u_{i-2})+(u_{i-1}-u_{i-2})\big)}.
\end{align*}
To get an upper bound of the right-hand side, we recall that 
\[
\lbar_i=(1.94)(\r_*)^i, \qquad
\lbar_{i-1}=(0.51)^{-1}(\r_*)^{i-1}, \qquad
u_{i-1}+s < 2s, \qquad
s-u_{i-1} \le (\r_*)^{-(i-1)} (u_0-s).
\]
In addition, similar to \eqref{J3-dnmt}, we have 
\[
\frac{(u_i-u_{i-2})+(u_{i-1}-u_{i-2})}{(u_{i-1}-u_i)} \ge 8.28.
\]
Therefore, we obtain
\begin{align*}
&\frac{1}{u_i-u_{i-1}}\int_{J_i} (\util-u_i)^2 d\x\\
&\le \Big[(1.94)^{-1}\Big(\log 2 -\frac{1}{2}\Big)
+\left(\frac{5.77}{3.77}\right)^2 \frac{2(0.0601)(4.64)}{(1.987)}
+\frac{(0.51)(4.64)}{6(8.28)}\Big] (\r_*)^{-i}
<0.81(\r_*)^{-i}.
\end{align*}
This completes the proof of \eqref{L2B}.

\vspace{2mm}
It remains to show \eqref{L2B-S}.
We first recall \eqref{t1} and \eqref{t2}.
As in \eqref{L2-stos}, we find that 
\[
\int_{-\infty}^{\x_s} (\util-s)^2 (\util+s)d\x
=\int_{-\infty}^{\x_s} (\util')^2 d\x
\le 2\d\big(E(\x_0)-E(-\infty)\big)
=2\d \Big[\frac{2}{3}s^3-\frac{1}{6}(u_0+s)^2(2s-u_0)\Big].
\]
Note that \(l(x)=(x+s)^2(2s-x)\) is decreasing on \([s,1.0601s]\) and that  \(\util+s >1.987s\) on \((-\infty,\x_s)\).
Then, we obtain 
\begin{align*}
\int_{-\infty}^{\x_s} (\util-s)^2 d\x
&\le \frac{1}{1.987s} \int_{-\infty}^{\x_s} (\util-s)^2 (\util+s)d\x\\
&\le \frac{2\d}{1.987s} \Big[\frac{2}{3}s^3-\frac{1}{6}(u_0+s)^2(2s-u_0)\Big]
<0.001s
\end{align*}
This completes the proof of \eqref{L2B-S} and Proposition \ref{prop:L2}.
\end{pf}

\subsection{Proof of Theorem \ref{thm:main}} \label{sec:pf-main}
We are ready to prove the contraction property via an inductive argument.
First, we outline the main idea of the proof.
In view of Lemma \ref{lem:RHS} (with \eqref{L2-stable}-\eqref{X-def}), we aim to show the following:
\begin{equation} \label{goal}
-\frac{M}{2s} \Big(\int_\RR w \util'd\x\Big)^2
-\frac{1}{2}\int_\RR w^2 \util' d\x
- \frac{9}{10}\int_\RR (w_\x)^2 d\x
\le 0,
\end{equation}
where \(w\) denotes the perturbation, i.e., \(w \coloneqq u^X-\util\).
We then introduce the following notation:
\begin{equation} \label{badgood}
\Hcal_i \coloneqq -\frac{1}{2}\int_{\x_i}^{\x_{i-1}} w^2 \util' d\x, \qquad
\Dcal \coloneqq - \int_\RR (w_\x)^2 d\x, \qquad
\Dcal[a,b] \coloneqq - \int_a^b (w_\x)^2 d\x.
\end{equation}
Notice that \(\Hcal_i\) is non-positive for odd \(i\) and non-negative for even \(i\) and that \(\Dcal=\Dcal[-\infty,\infty]\).
On the left-hand side of \eqref{goal}, the terms \(\Hcal_i\) with even \(i\) are the only non-negative contributions; thus our goal is to cancel them with the other terms.

The basic strategy is to use the Poincar\'e-type inequality (Lemma \ref{KV_lemma}).
To this end, we introduce the change of variable \eqref{ztu} on each monotonic interval \((\x_i,\x_{i-1})\) as follows:
\begin{equation} \label{ztu0}
z(\x) \coloneqq \util(\x), \qquad
dz = \util' d\x.
\end{equation}
This requires Proposition \ref{prop:key} for the Jacobian estimates and allows us to rewrite the diffusion term \(\Dcal[\x_i,\x_{i-1}]\) in a form suitable for applying Lemma \ref{KV_lemma}.
However, a significant challenge remains in making use of Lemma \ref{KV_lemma}: we need a squared average term of the following form: 
\[
-\frac{1}{\abs{u_i-u_{i-1}}}\Big(\int_{\x_i}^{\x_{i-1}}w\util'd\x\Big)^2
=-\frac{1}{\abs{u_i-u_{i-1}}}\Big(\int_{u_i}^{u_{i-1}}w dz\Big)^2.
\]
This is the main difficulty in the proof, which is overcome by the inductive argument developed in the present paper.
In the course of the argument, we rely on \eqref{transfer}:
\begin{equation} \label{transfer0}
\Big(\int_{J_i} w\util' d\x\Big)^2
=\Big(\int_{J_i} w_\x (\util-u_i) d\x\Big)^2
\le \Big(\int_{J_i} (\util-u_i)^2 d\x\Big)
\Dcal[\x_i,\x^i],
\end{equation}
where \(J_i=(\x_i,\x^i)\).
This, together with the \(L^2\) estimates in Proposition \ref{prop:L2}, quantifies how much of the diffusion term \(\Dcal[\x_i,\x^i]\) is required to produce the squared average term on \(J_i\).

We also recall the structure of the inductive argument described in the final part of Section \ref{sec:main3}.
The base case of the induction is addressed in \textit{Steps 0} and \textit{1}, where the first term on the left-hand side of \eqref{goal} plays a crucial role.
In \textit{Step 2}, we carry out the inductive step.

\step{0}
First of all, using the same argument as in \eqref{transfer0}, we observe 
\[
-\Big(\int_{-\infty}^{\x_s}(\util-s)^2d\x\Big) \Big(\int_{-\infty}^{\x_s} (w_\x)^2 d\x\Big)
\le -\Big(\int_{-\infty}^{\x_s} w\util' d\x\Big)^2.
\]
Then, using \eqref{L2B-S} in Proposition \ref{prop:L2}, we find that
\begin{equation} \label{step0-1}
-20 C_0 (0.001) \Dcal[-\infty,\x_s]
\le -\frac{40C_0}{2s} \Big(\int_{-\infty}^{\x_s} w\util' d\x\Big)^2,
\end{equation}
where \(C_0\) is a positive constant to be determined at the end of \textit{Step 0}.
This together with the shift good term (the first term on the left-hand side of \eqref{goal}) yields that 
\[
-\frac{M}{2s} \Big(\int_\RR w\util' d\x\Big)^2
-\frac{40C_0}{2s} \Big(\int_{-\infty}^{\x_s} w\util' d\x\Big)^2
\le -\frac{1}{2s}\frac{40 C_0 M}{40C_0 + M} \Big(\int_{\x_s}^\infty w\util' d\x\Big)^2.
\]
Thus, for large enough \(M\)---in particular, for any \(M\ge\frac{40}{39}C_0\)---we have
\begin{equation} \label{step0-2}
-\frac{M}{2s} \Big(\int_\RR w\util' d\x\Big)^2
-\frac{40C_0}{2s} \Big(\int_{-\infty}^{\x_s} w\util' d\x\Big)^2
\le -\frac{C_0}{2s}\Big(\int_{\x_s}^\infty w\util' d\x\Big)^2.
\end{equation}
We also exploit a portion of \(\Hcal_1\) as follows:
\begin{equation} \label{step0-3}
\frac{1}{15}\Hcal_1
=-\frac{1}{30}\int_{\x_1}^{\x_0} w^2\util' d\x
\le -\frac{1}{30} \int_{\x_s}^{\x_0} w^2 \util' d\x
\le -\frac{1}{30(u_0-s)} \Big(\int_{\x_s}^{\x_0} w\util' d\x\Big)^2.
\end{equation}
Using \eqref{step0-2} and \eqref{step0-3}, we obtain the squared average term on the rightmost decreasing interval:
\begin{equation} \label{step0-4}
-\frac{C_0}{2s}\Big(\int_{\x_s}^\infty w\util' d\x\Big)^2
-\frac{1}{30(u_0-s)} \Big(\int_{\x_s}^{\x_0} w\util' d\x\Big)^2
\le -\frac{C_0}{30C_0(u_0-s)+2s} \Big(\int_{\x_0}^\infty w\util' d\x\Big)^2.
\end{equation}
We choose the constant \(C_0\) so that it satisfies
\[
\frac{C_0}{30C_0(u_0-s)+2s} \ge \frac{7}{12(u_0+s)}, \quad \text{or equivalently,} \quad
C_0 \ge \frac{14s}{222s-198u_0}.
\]
On the other hand, since \(u_0\le1.0601s\) by \eqref{u0-exp}, it suffices to choose \(C_0\ge\frac{13}{10}\).
We then fix \(C_0=\frac{13}{10}\).
Based on the change of variables \eqref{ztu0}, we use Lemma \ref{KV_lemma}, with Proposition \ref{prop:key} and \eqref{step0-4}, to get
\begin{equation} \label{step0-5}
\begin{aligned}
&-\frac{7}{12(u_0+s)}\Big(\int_{\x_0}^\infty w\util' d\x\Big)^2
-\frac{7}{24}(\lbar_0)^{-1} \Dcal[\x_0,\infty]\\
&\qquad
\le -\frac{7}{12(u_0+s)}\Big(\int_{-s}^{u_0} w dz\Big)^2
-\frac{7}{24}\int_{-s}^{u_0} (u_0-z)(z+s) (w_z)^2 dz\\
&\qquad
\le -\frac{7}{12} \int_{-s}^{u_0} w^2 dz
=\frac{7}{12} \int_{\x_0}^\infty w^2 \util' d\x.
\end{aligned}
\end{equation}
Note that \(\lbar_0=0.355\) and \(\frac{7}{24}(\lbar_0)^{-1} < 0.83\).
This cancels with \(\Hcal_0\) as follows:
\begin{equation} \label{step0-6}
\frac{7}{12} \int_{\x_0}^\infty w^2 \util' d\x
+\Hcal_0
=\frac{7}{12} \int_{\x_0}^\infty w^2 \util' d\x
-\frac{1}{2}\int_{\x_0}^\infty w^2 \util' d\x
=\frac{1}{12} \int_{\x_0}^\infty w^2 \util' d\x.
\end{equation}
Thus, summing up \eqref{step0-1}--\eqref{step0-6}, we summarize \textit{Step 0} as follows:
\begin{equation} \label{step0}
\begin{aligned}
-(0.026)\Dcal[-\infty,\x_s]
-\frac{M}{2s}\Big(\int_\RR w\util' d\x\Big)^2
+\frac{1}{15}\Hcal_1
-(0.83)\Dcal[\x_0,\infty]
+\Hcal_0
\le \frac{1}{12} \int_{\x_0}^\infty w^2 \util' d\x.
\end{aligned}
\end{equation}
We note that \(\Hcal_1\) is negative, whereas \(\Hcal_0\) is positive.
We also recall that \(C_0=\frac{13}{10}\) and \(M\ge\frac{4}{3}\).

\step{1} In this step, we verify the base case of the induction---we obtain the following good term: 
\begin{equation} \label{base}
-\Big(\frac{1}{2}+a_1\Big)\int_{\x_1}^{\x_0} w^2 \util' d\x,
\end{equation}
for some constant \(a_1>0\).
To this end, using the remainder in \eqref{step0}, we first observe
\begin{equation} \label{step1-1}
\frac{1}{12} \int_{\x_0}^\infty w^2 \util' d\x
\le \frac{1}{12} \int_{\x_0}^{\x^1} w^2 \util' d\x
\le -\frac{1}{12(u_0-u_1)} \Big(\int_{\x_0}^{\x^1} w \util' d\x\Big)^2.
\end{equation}
On the other hand, using \eqref{transfer0} with \eqref{L2B} in Proposition \ref{prop:L2}, we find that
\begin{equation} \label{step1-2}
-0.178C_1 \int_{J_1} (w_\x)^2 d\x
=-0.178C_1 \Dcal[\x_1,\x^1]
\le -\frac{C_1}{u_0-u_1} \Big(\int_{J_1} w\util'd\x\Big)^2,
\end{equation}
where \(C_1>0\) is a constant to be determined later.
The two estimates above show that we have
\[
-\frac{1}{12(u_0-u_1)} \Big(\int_{\x_0}^{\x^1} w \util' d\x\Big)^2
-\frac{C_1}{u_0-u_1} \Big(\int_{J_1} w\util' d\x\Big)^2
\le - \frac{C_1}{(12C_1+1)(u_0-u_1)} \Big(\int_{\x_1}^{\x_0} w \util' d\x\Big)^2.
\]
We choose \(C_1\) to satisfy 
\[
\frac{C_1}{12C_1+1} = \frac{1}{15}, \quad \text{or equivalently,} \quad
C_1=\frac{1}{3}.
\]
Then, we rewrite the above inequality into the following:
\begin{equation} \label{step1-3}
-\frac{1}{12(u_0-u_1)} \Big(\int_{\x_0}^{\x^1} w \util' d\x\Big)^2
-\frac{1}{3(u_0-u_1)} \Big(\int_{J_1} w\util' d\x\Big)^2
\le - \frac{1}{15(u_0-u_1)} \Big(\int_{\x_1}^{\x_0} w \util' d\x\Big)^2.
\end{equation}
It follows from Lemma \ref{KV_lemma} and Proposition \ref{prop:key} with \eqref{ztu0} that
\begin{equation} \label{step1-4}
\begin{aligned}
&- \frac{1}{15(u_0-u_1)} \Big(\int_{\x_1}^{\x_0} w \util' d\x\Big)^2
-\frac{1}{30}(\lbar_1)^{-1} \Dcal[\x_1,\x_0]\\
&\qquad
\le -\frac{1}{15(u_0-u_1)}\Big(\int_{u_1}^{u_0} w dz\Big)^2
-\frac{1}{30}\int_{u_1}^{u_0} (u_0-z)(z-u_1) (w_z)^2 dz\\
&\qquad
\le -\frac{1}{15} \int_{u_1}^{u_0} w^2 dz
=-\frac{1}{15} \int_{\x_1}^{\x_0} w^2 \util' d\x.
\end{aligned}
\end{equation}
We recall \(\lbar_1=9.60\).
Thus, combining \eqref{step1-1}-\eqref{step1-4}, we obtain the following summary of \textit{Step 1}:
\[
\frac{1}{12} \int_{\x_0}^\infty w^2 \util' d\x
-(0.06)\Dcal[\x_1,\x^1]
-(0.01)\Dcal[\x_1,\x_0]
\le -\frac{1}{15} \int_{\x_1}^{\x_0} w^2 \util' d\x.
\]
This together with \eqref{step0} and the \(\Hcal_1\) term implies that
\begin{equation} \label{step0and1}
\begin{aligned}
&-\frac{M}{2s}\Big(\int_\RR w\util' d\x\Big)^2
+\Hcal_0
-(0.83)\Dcal[\x_0,\infty]
-(0.026)\Dcal[-\infty,\x_s]\\
&\qquad
-(0.06)\Dcal[\x_1,\x^1]
-(0.01)\Dcal[\x_1,\x_0]
+\Hcal_1
\le -\Big(\frac{1}{2}+\frac{1}{30}\Big)\int_{\x_1}^{\x_0} w^2\util'd\x.
\end{aligned}
\end{equation}
This verifies the base case \eqref{base} with \(a_1=\frac{1}{30}\).

\step{2}
To prepare for the inductive step, we introduce two sequences as follows: for each \(n\in\NN\),
\begin{equation} \label{seq}
a_n\coloneqq\Big(\frac{1}{15}\Big)2^{-n}, \qquad
C_{2n}\coloneqq a_{2n-1}+\frac{3}{2}+\frac{1}{2a_{2n-1}}, \qquad
C_{2n+1}\coloneqq a_{2n}.
\end{equation}
We now assume the induction hypothesis: for some odd \(i\ge1\), the following good term is available:
\begin{equation} \label{ind-hypo0}
-\Big(\frac{1}{2}+a_i\Big) \int_{\x_i}^{\x_{i-1}} w^2 \util' d\x.
\end{equation}
We show that the induction hypothesis holds for \(i+2\).
To this end, we first observe
\begin{equation} \label{step2-1}
-\Big(\frac{1}{2}+a_i\Big) \int_{\x_i}^{\x_{i-1}} w^2 \util' d\x
\le -\Big(\frac{1}{2}+a_i\Big) \int_{\x_i}^{\x^{i+1}} w^2 \util' d\x
\le -\frac{\big(\frac{1}{2}+a_i\big)}{u_{i+1}-u_i} \Big(\int_{\x_i}^{\x^{i+1}} w \util' d\x\Big)^2.
\end{equation}
Meanwhile, it follows from \eqref{transfer0} with \eqref{L2B} in Proposition \ref{prop:L2} that
\begin{equation} \label{step2-2}
\begin{aligned}
-(0.81)(\r_*)^{-(i+1)}C_{i+1}\int_{J_{i+1}} (w_\x)^2 d\x
&=-(0.81)(\r_*)^{-(i+1)}C_{i+1}\Dcal[\x_{i+1},\x^{i+1}]\\
&\le -\frac{C_{i+1}}{u_{i+1}-u_i}\Big(\int_{J_{i+1}} w\util'd\x\Big)^2.
\end{aligned}
\end{equation}
Then, using \eqref{step2-1} and \eqref{step2-2}, we obtain
\begin{align*}
&-\frac{\big(\frac{1}{2}+a_i\big)}{u_{i+1}-u_i} \int_{\x_i}^{\x^{i+1}} w^2 \util' d\x
-\frac{C_{i+1}}{u_{i+1}-u_i}\Big(\int_{J_{i+1}} w\util'd\x\Big)^2\\
&\qquad
\le -\frac{1}{u_{i+1}-u_i}\frac{\big(\frac{1}{2}+a_i\big)C_{i+1}}{\big(\frac{1}{2}+a_i\big)+C_{i+1}}
\Big(\int_{\x_{i+1}}^{\x_i} w \util' d\x\Big)^2.
\end{align*}
Thanks to \eqref{seq}, this is equivalent to the following:
\begin{equation} \label{step2-3}
-\frac{\big(\frac{1}{2}+a_i\big)}{u_{i+1}-u_i} \int_{\x_i}^{\x^{i+1}} w^2 \util' d\x
-\frac{C_{i+1}}{u_{i+1}-u_i}\Big(\int_{J_{i+1}} w\util'd\x\Big)^2
\le -\frac{\big(\frac{1}{2}+a_{i+1}\big)}{u_{i+1}-u_i} \Big(\int_{\x_{i+1}}^{\x_i} w \util' d\x\Big)^2.
\end{equation}
We apply Lemma \ref{KV_lemma} together with Proposition \ref{prop:key} and \eqref{ztu0} to obtain
\begin{equation} \label{step2-4}
\begin{aligned}
&-\frac{\big(\frac{1}{2}+a_{i+1}\big)}{u_{i+1}-u_i} \Big(\int_{\x_{i+1}}^{\x_i} w \util' d\x\Big)^2
-\frac{1}{2}\Big(\frac{1}{2}+a_{i+1}\Big)(\lbar_{i+1})^{-1} \Dcal[\x_{i+1},\x_i]\\
&\qquad
\le -\frac{\big(\frac{1}{2}+a_{i+1}\big)}{u_{i+1}-u_i} \Big(\int_{u_i}^{u_{i+1}} w dz\Big)^2
-\frac{1}{2}\Big(\frac{1}{2}+a_{i+1}\Big) \int_{u_i}^{u_{i+1}} (u_{i+1}-z)(z-u_i) (w_z)^2 dz\\
&\qquad
\le -\Big(\frac{1}{2}+a_{i+1}\Big) \int_{u_i}^{u_{i+1}} w^2 dz
=\Big(\frac{1}{2}+a_{i+1}\Big) \int_{\x_{i+1}}^{\x_i} w^2 \util' d\x,
\end{aligned}
\end{equation}
where \(\lbar_{i+1}=(1.94)(\r_*)^{i+1}\).
This cancels with \(\Hcal_{i+1}\) as follows:
\begin{equation} \label{step2-5}
\Big(\frac{1}{2}+a_{i+1}\Big) \int_{\x_{i+1}}^{\x_i} w^2 \util' d\x
+\Hcal_{i+1}
=a_{i+1}\int_{\x_{i+1}}^{\x_i} w^2 \util' d\x (\le 0).
\end{equation}
This yields the following good term: 
\begin{equation} \label{step2-6}
a_{i+1}\int_{\x_{i+1}}^{\x_i} w^2 \util' d\x
\le a_{i+1}\int_{\x_{i+1}}^{\x^{i+2}} w^2 \util' d\x
\le -\frac{a_{i+1}}{u_{i+1}-u_{i+2}} \Big(\int_{\x_{i+1}}^{\x^{i+2}} w\util' d\x\Big)^2.
\end{equation}
Moreover, using \eqref{transfer0} with \eqref{L2B} in Proposition \ref{prop:L2}, we obtain
\begin{equation} \label{step2-7}
\begin{aligned}
-(0.82)(\r_*)^{-(i+2)}C_{i+2}\int_{J_{i+2}} (w_\x)^2 d\x
&=-(0.82)(\r_*)^{-(i+2)}C_{i+2}\Dcal[\x_{i+2},\x^{i+2}]\\
&\le -\frac{C_{i+2}}{u_{i+1}-u_{i+2}}\Big(\int_{J_{i+2}} w\util'd\x\Big)^2.
\end{aligned}
\end{equation}
Combining \eqref{step2-6} and \eqref{step2-7}, we find that
\begin{align*}
&-\frac{a_{i+1}}{u_{i+1}-u_{i+2}} \Big(\int_{\x_{i+1}}^{\x^{i+2}} w\util' d\x\Big)^2
-\frac{C_{i+2}}{u_{i+1}-u_{i+2}}\Big(\int_{J_{i+2}} w\util'd\x\Big)^2\\
&\qquad
\le -\frac{1}{u_{i+1}-u_{i+2}}\frac{a_{i+1}C_{i+2}}{a_{i+1}+C_{i+2}}
\Big(\int_{\x_{i+2}}^{\x_{i+1}}w\util' d\x\Big)^2.
\end{align*}
Under the choices of \(a_i\) and \(C_i\) in \eqref{seq}, we rewrite this into the following:
\begin{equation} \label{step2-8}
\begin{aligned}
&-\frac{a_{i+1}}{u_{i+1}-u_{i+2}} \Big(\int_{\x_{i+1}}^{\x^{i+2}} w\util' d\x\Big)^2
-\frac{C_{i+2}}{u_{i+1}-u_{i+2}}\Big(\int_{J_{i+2}} w\util'd\x\Big)^2\\
&\qquad
\le -\frac{a_{i+2}}{u_{i+1}-u_{i+2}} \Big(\int_{\x_{i+2}}^{\x_{i+1}}w\util' d\x\Big)^2.
\end{aligned}
\end{equation}
Then, using Lemma \ref{KV_lemma} with Proposition \ref{prop:key} and \eqref{ztu0}, we obtain
\begin{equation} \label{step2-9}
\begin{aligned}
&-\frac{a_{i+2}}{u_{i+1}-u_{i+2}} \Big(\int_{\x_{i+2}}^{\x_{i+1}}w\util' d\x\Big)^2
-\frac{a_{i+2}}{2}(\lbar_{i+2})^{-1} \Dcal[\x_{i+2},\x_{i+1}]\\
&\qquad
\le -\frac{a_{i+2}}{u_{i+1}-u_{i+2}} \Big(\int_{u_{i+2}}^{u_{i+1}}w dz\Big)^2
-\frac{a_{i+2}}{2} \int_{u_{i+2}}^{u_{i+1}} (u_{i+1}-z)(z-u_{i+2}) (w_z)^2 dz\\
&\qquad
\le -a_{i+2} \int_{u_{i+2}}^{u_{i+1}} w^2 dz
=-a_{i+2} \int_{\x_{i+2}}^{\x_{i+1}} w^2 \util' d\x.
\end{aligned}
\end{equation}
This with \(\Hcal_{i+2}\) recovers the induction hypothesis \eqref{ind-hypo0} for \(i+2\)---gathering \eqref{step2-1}-\eqref{step2-9}, we have
\begin{equation} \label{step2}
\begin{aligned}
&-\Big(\frac{1}{2}+a_i\Big) \int_{\x_i}^{\x_{i-1}} w^2 \util' d\x
+\Hcal_{i+1}+\Hcal_{i+2}\\
&-(0.81)(\r_*)^{-(i+1)}C_{i+1}\Dcal[\x_{i+1},\x^{i+1}]
-(0.82)(\r_*)^{-(i+2)}C_{i+2}\Dcal[\x_{i+2},\x^{i+2}]\\
&-\frac{1}{2}\Big(\frac{1}{2}+a_{i+1}\Big)(1.94)^{-1}(\r_*)^{-(i+1)} \Dcal[\x_{i+1},\x_i]
-\frac{a_{i+2}}{2}(0.51)(\r_*)^{-(i+2)} \Dcal[\x_{i+2},\x_{i+1}]\\
&\qquad\le -\Big(\frac{1}{2}+a_{i+2}\Big) \int_{\x_{i+2}}^{\x_{i+1}} w^2 \util' d\x.
\end{aligned}
\end{equation}
We now examine how much of the diffusion term is used.
We recall \eqref{seq}: 
\[
a_i=\frac{1}{15}2^{-i}, \quad
a_{i+1}=\frac{1}{15}2^{-(i+1)}, \quad
a_{i+2}=\frac{1}{15}2^{-(i+2)}, \quad
C_{i+1}=a_i+\frac{3}{2}+\frac{1}{2a_i}, \quad
C_{i+2}=a_{i+1}.
\]
Note that for any odd \(i\in\NN\), \(C_{i+3}\le4C_{i+1}\), and thus we have
\[
(0.81)(\r_*)^{-(i+1)}C_{i+1}
\le (0.81)(\r_*)^{-2}C_2
< 0.63.
\]
Moreover, for any odd \(i\in\NN\), the other coefficients of the diffusion term are all less than \(1\):
\[
(0.82)(\r_*)^{-(i+2)}C_{i+2}, \quad
\frac{1}{2}\Big(\frac{1}{2}+a_{i+1}\Big)(1.94)^{-1}(\r_*)^{-(i+1)}, \quad
\frac{a_{i+2}}{2}(0.51)(\r_*)^{-(i+2)} < 0.01.
\]
Thus, from \eqref{step2}, we find that for each odd \(i\in\NN\),
\begin{equation} \label{step2-0}
\begin{aligned}
&-\Big(\frac{1}{2}+a_i\Big) \int_{\x_i}^{\x_{i-1}} w^2 \util' d\x
+\Hcal_{i+1}+\Hcal_{i+2}
-(0.63)\Dcal[\x_{i+1},\x^{i+1}]\\
&-(0.01)\Dcal[\x_{i+2},\x^{i+2}]
-(0.01)\Dcal[\x_{i+1},\x_i]
-(0.01)\Dcal[\x_{i+2},\x_{i+1}]\\
&\qquad\le -\Big(\frac{1}{2}+a_{i+2}\Big) \int_{\x_{i+2}}^{\x_{i+1}} w^2 \util' d\x.
\end{aligned}
\end{equation}

Combining the base case \eqref{step0and1} and the inductive step \eqref{step2-0}, we observe that for any interval on \((a,b)\), less than \(\frac{9}{10}\) of the diffusion term, i.e., \(\frac{9}{10}\Dcal[a,b]\) is used.
Thus, we conclude that
\begin{align*}
\frac{d}{dt}\frac{1}{2}\int_\RR w^2 d\x
&=-\frac{M}{s} \Big(\int_\RR w \util'd\x\Big)^2
-\frac{1}{2}\int_\RR w^2 \util' d\x
- \int_\RR (w_\x)^2 d\x\\
&\le -\frac{M}{2s} \Big(\int_\RR w \util'd\x\Big)^2
-\frac{1}{10}\int_\RR (w_\x)^2 d\x \le 0,
\end{align*}
which yields \eqref{L2-stable}.
In addition, using \eqref{L2-stable}, we proceed along the lines of the proof of \cite[Theorem 2.3]{Hur} to derive the time-asymptotic stability result \eqref{timeasy}.
More precisely, the implication from \eqref{L2-stable} to \eqref{timeasy} is established in \cite{Hur}.
Since this implication does not require any spectral assumptions and the same estimates used in \cite{Hur} are available in the present setting, we omit the proof for brevity.
Lastly, using \eqref{timeasy}, we observe that 
\[
|\dot{X}(t)| \le \norm{u(t,\cdot)-\util(\cdot-X(t))}_{L^\infty(\RR)} \int_\RR |\util'(\x)|d\x
\to 0 \quad \text{as } t \to \infty,
\]
since the total variation of the oscillatory viscous-dispersive shock is bounded because of the decay rates \eqref{inc-decay} and \eqref{dec-decay}.
This completes the proof of Theorem \ref{thm:main}. \qed

\section{Proof of Theorem \ref{thm:limit}: Zero Viscosity-Dispersion Limits} \label{sec:limit}
\setcounter{equation}{0}
Finally, we prove Theorem \ref{thm:limit}.
The key idea is to exploit the contraction property in Theorem \ref{thm:main} in order to derive a uniform estimate.
To this end, we first show that the contraction property is invariant under the scaling \((\n t,\n x)\), from which the first part of Theorem \ref{thm:limit} follows.
Then, we establish the second part of Theorem \ref{thm:limit}.

\subsection{Proof of Theorem \ref{thm:limit}: Part 1} \label{sec:scaling}
Throughout this section, we fix two parameters \(\e\) and \(\d\) satisfying \eqref{cond1}, i.e.,
\[
\frac{1}{4} < \frac{\d(u_--u_+)}{2\e^2} < \frac{1}{2}.
\]
Let \(\util\) be the viscous-dispersive shock profile associated with the fixed pair of parameters \((\e,\d)\), and let \(u_0\) be an initial datum such that \(u_0-\util \in H^1(\RR)\).
Then, we recall the original initial value problem \eqref{burgers}:
\begin{equation} \label{burgers0}
u_t + \Big(\frac{u^2}{2}\Big)_x = \e u_{xx}-\d u_{xxx}, \qquad u(0,x)=u_0(x) \in H^1(\RR).
\end{equation}
Then, for the solution \(u\in\Xcal_T\), Theorem \ref{thm:main} shows that
\begin{equation} \label{re-cont}
\begin{aligned}
&\int_\RR \big(u(t,x)-\util(x-\s t-X(t))\big)^2 dx
+\frac{(u_--u_+)}{M}\int_0^T |\dot{X}(t)|^2 dt\\
&\qquad +\frac{\e}{10}\int_0^T \int_\RR \big((u(t,x)-\util(x-\s t-X(t)))_x\big)^2 dx dt
\le \int_\RR \big(u_0(x)-\util(x)\big)^2 dx.
\end{aligned}
\end{equation}
We recall the scaled equation: 
\begin{equation} \label{scaled-eq0}
(u^\n)_t + \Big(\frac{(u^\n)^2}{2}\Big)_x = \n \e (u^\n)_{xx}- \n^2 \d (u^\n)_{xxx}, \qquad
u^\n(0,x)=u_0\Big(\frac{x}{\n}\Big).
\end{equation}
In this subsection, we first prove the scaling invariance of the contraction property---namely, we aim to show that
\begin{equation} \label{nu-cont}
\begin{aligned}
&\int_\RR \big((u^\n)(t,x)-(\util^\n)(x-\s t-X_\n(t))\big)^2 dx
+\frac{(u_--u_+)}{M}\int_0^T |\dot{X}_\n(t)|^2 dt\\
&\qquad +\frac{\e}{10}\n \int_0^T \int_\RR \big(((u^\n)(t,x)-(\util^\n)(x-\s t-X_\n(t)))_x\big)^2 dx dt
\le \int_\RR \Big(u_0\Big(\frac{x}{\n}\Big)-\util^\n(x)\Big)^2 dx,
\end{aligned}
\end{equation}
where \(\util^\n(x)\coloneqq\util(x/\n)\) and the shift function \(X_\n\) is defined by \(\dot{X}_\n(0)=0\) and
\begin{equation} \label{nu-shift}
\dot{X}_\n(t)
=-\frac{2M}{(u_--u_+)}\int_\RR \big(u^\n(t,x)-\util^\n(x-\s t-X_\n(t))\big) (\util^\n)'(x-\s t-X_\n(t))dx.
\end{equation}
This can be shown as follows.
We first set \(u(t,x)\coloneqq u^\n(\n t,\n x)\), which satisfies the equation \eqref{burgers0} subject to the initial datum \(u_0(x)\).
Then, we apply Theorem \ref{thm:main} to find that \eqref{re-cont} holds with the shift function \(X(t)\coloneqq X_{\n=1}(t)\) which is given by \(X(0)=0\) and 
\[
\dot{X}(t)
=-\frac{2M}{(u_--u_+)}\int_\RR \big(u(t,x)-\util(x-\s t-X(t))\big) \util'(x-\s t-X(t))dx.
\]
To proceed, we examine the relationship between the shift functions \(X_\n\) and \(X\).
Indeed, \(X_\n\) can be defined as \(X_\n(t)=\n X\big(\frac{t}{\n}\big)\), which can be justified in the following way:
\begin{equation} \label{sfts}
\begin{aligned}
\dot{X}\Big(\frac{t}{\n}\Big)
&=-\frac{2M}{(u_--u_+)}\int_\RR
\Big(u\Big(\frac{t}{\n},\frac{x}{\n}\Big)-\util\Big(\frac{x}{\n}-\frac{\s t}{\n}-X\big(\frac{t}{\n}\big)\Big)\Big)
\util'\Big(\frac{x}{\n}-\frac{\s t}{\n}-X\big(\frac{t}{\n}\big)\Big) \frac{dx}{\n}\\
&=-\frac{2M}{(u_--u_+)}\int_\RR
\Big(u^\n(t,x)-\util^\n\Big(x-\s t-\n X\big(\frac{t}{\n}\big)\Big)\Big)
(\util^\n)'\Big(x-\s t-\n X\big(\frac{t}{\n}\big)\Big) dx\\
&=-\frac{2M}{(u_--u_+)}\int_\RR \big(u^\n(t,x)-\util^\n(x-\s t-X_\n(t))\big) (\util^\n)'(x-\s t-X_\n(t)) dx
=\dot{X}_\n(t).
\end{aligned}
\end{equation}
Then, we obtain that 
\begin{align*}
\int_\RR \big(u^\n(t,x)-\util^\n(x-\s t-X_\n(t))\big)^2 dx
&=\n \int_\RR \Big(u^\n(\n t,\n x)-\util^\n(\n(x-\s t)-X_\n(\n t))\Big)^2 dx\\
&=\n \int_\RR \Big(u(t,x)-\util\big(x-\s t-\frac{1}{\n}X_\n(\n t)\big)\Big)^2 dx\\
&=\n \int_\RR \Big(u(t,x)-\util\big(x-\s t-X(t)\big)\Big)^2 dx.
\end{align*}
Moreover, using \eqref{sfts}, we find that
\begin{align*}
\int_0^T |\dot{X}_\n(t)|^2 dt
=\int_0^T \Big|\dot{X}\big(\frac{t}{\n}\big)\Big|^2 dt
=\n \int_0^\frac{T}{\n} |\dot{X}(t)|^2 dt
\end{align*}
and similarly, it also holds that 
\begin{multline*}
\int_0^T \int_\RR \Big(\big((u^\n)(t,x)-(\util^\n)(x-\s t-X_\n(t))\big)_x\Big)^2 dx dt\\
=\int_0^\frac{T}{\n} \int_\RR \Big(\big(u(t,x)-\util(x-\s t-X(t))\big)_x\Big)^2 dx dt.
\end{multline*}
Thus, since \eqref{re-cont} holds for any \(T>0\), we obtain
\begin{align*}
&\int_\RR \big((u^\n)(t,x)-(\util^\n)(x-\s t-X_\n(t))\big)^2 dx
+\frac{(u_--u_+)}{M}\int_0^T |\dot{X}_\n(t)|^2 dt\\
&\qquad +\frac{\e}{10}\n \int_0^T \int_\RR \big(((u^\n)(t,x)-(\util^\n)(x-\s t-X_\n(t)))_x\big)^2 dx dt\\
&=\n \int_\RR \Big(u(t,x)-\util\big(x-\s t-X(t)\big)\Big)^2 dx
+\frac{(u_--u_+)}{M} \n \int_0^\frac{T}{\n} |\dot{X}(t)|^2 dt\\
&\qquad
+\frac{\e}{10}\n \int_0^\frac{T}{\n} \int_\RR \Big(\big(u(t,x)-\util(x-\s t-X(t))\big)_x\Big)^2 dx dt\\
&\le \n \int_\RR \big(u_0(x)-\util(x)\big)^2 dx
= \int_\RR \Big(u_0\Big(\frac{x}{\n}\Big)-\util^\n(x)\Big)^2 dx.
\end{align*}
This establishes \eqref{nu-cont} as we desired.

\vspace{2mm}
The first part of Theorem \ref{thm:limit}, i.e., \eqref{pstu}, follows immediately.
Since we have 
\[
\int_\RR \big(\util^\n(x)-\ubar(x)\big)^2 dx
=\int_\RR \Big(\util\big(\frac{x}{\n}\big)-\ubar(x)\Big)^2 dx
=\int_\RR \Big(\util\big(\frac{x}{\n}\big)-\ubar\big(\frac{x}{\n}\big)\Big)^2 dx
=\n\int_\RR (\util(x)-\ubar(x))dx \le C\n,
\]
it follows from \eqref{nu-cont} that
\begin{align*}
&\int_\RR \big(u^\n(t,x)-\ubar(x-\s t-X_\n(t))\big)^2dx\\
&\le \int_\RR \big(u^\n(t,x)-\util^\n(x-\s t-X_\n(t))\big)^2dx
+ \int_\RR \big(\util^\n(x-\s t-X_\n(t))-\ubar(x-\s t-X_\n(t))\big)^2dx\\
&\le \int_\RR \Big(u_0\big(\frac{x}{\n}\big)-\util^\n(x)\Big)^2 dx + C\n\\
&\le \int_\RR \Big(u_0\big(\frac{x}{\n}\big)-\ubar(x)\Big)^2 dx
+\int_\RR \big(\ubar(x)-\util^\n(x)\big)^2 dx
+C\n
\le \int_\RR \Big(u_0\big(\frac{x}{\n}\big)-\ubar(x)\Big)^2 dx +C\n.
\end{align*}
This yields the desired conclusion \eqref{pstu}.

\subsection{Proof of Theorem \ref{thm:limit}: Part 2}
Finally, we prove Theorem \ref{thm:limit}.
The proof follows the argument developed in \cite{KV}.
We begin by proving the existence of well-prepared initial data, then justify the zero viscosity-dispersion limit together with its stability estimate.
Lastly, we establish control of the (limit of) shift, which yields uniqueness.

\subsubsection{Proof of \eqref{wpid}: Well-Prepared Initial Data}
Let \(u^0\) be a given initial datum.
We introduce a truncation of \(u^0\) as follows: for any \(r>1\),
\[
u_0^r \coloneqq
\begin{cases}
u^0 \one{-r^{-1}\le u^0\le r^{-1}}
+\ubar \one{u^0<-r^{-1} \text{ or }u^0>r^{-1}}, &\text{if }  -r^{-1}\le x\le r^{-1},\\
u_-, &\text{if } x\le -r^{-1},\\
u_+, &\text{if } x\ge r^{-1}.
\end{cases}
\]
We also define a smooth mollifier \(\phi_1\) which satisfies \(\supp \phi_1 \subset [-1,1]\), and set \(\phi_\n(x)\coloneqq \frac{1}{\sqrt{\n}}\phi_1\big(\frac{x}{\sqrt{\n}}\big)\).
Then, we introduce a double sequence \(\{u_0^{r,\n}\}_{r,\n>0}\) by 
\[
u_0^{r,\n} \coloneqq u_0^r \ast \phi_\n.
\]
Note that
\[
\lim_{r\to 0} \lim_{\n\to 0} u_0^{r,\n} = u^0 \quad \text{a.e.}
\]
Following \cite{KV}, a standard argument based on the dominated convergence theorem yields that
\[
\lim_{r\to0} \lim_{\n\to0} \int_\RR \big(u_0^{r,\n}-\util\big(\frac{x}{\n}\big)\big)^2 dx
= \int_{-\infty}^0 (u_0^r-u_-)^2 dx
+\int_0^\infty (u_0^r-u_+)^2 dx.
\]
Then, by a diagonal extraction, we select a sequence of smooth functions \(\{u_0^\n\}_{\n>0}\) such that 
\[
\lim_{\n\to0} \int_\RR \big(u_0^\n-\util\big(\frac{x}{\n}\big)\big)^2 dx
= \int_{-\infty}^0 (u_0^r-u_-)^2 dx
+\int_0^\infty (u_0^r-u_+)^2 dx,
\]
which establishes \eqref{wpid}.

\subsubsection{Proof of \eqref{zvdl}: Zero Viscosity-Dispersion Limits}
We now justify the zero viscosity-dispersion limits.
To this end, we derive a uniform estimate with respect to the vanishing parameter \(\n>0\), where the contraction estimate plays a crucial role.
We proceed as follows.
First, for any \(\d\in(0,1)\), we choose \(\n_*>0\) such that for all \(\n\in(0,\n_*)\),
\[
\abs{\int_\RR \big(u_0^\n(x)-\util^\n(x)\big)^2 dx
- \int_\RR \big(u^0(x)-\ubar(x)\big)^2 dx} < \d.
\]
Let \(u^\n\) be the solution to \eqref{scaled-eq0} subject to the initial data \(u_0^\n\) and let \(X_\n\) denote the shift function associated with the initial data \(u_0^\n\), as defined in \eqref{X-def}.
Then, by the analysis in Section \ref{sec:scaling}, in particular \eqref{nu-cont}, we obtain that for any \(\n\in(0,\n_*)\),
\begin{equation} \label{uniform}
\begin{aligned}
&\int_\RR \big((u^\n)(t,x)-(\util^\n)^{-X_\n}(x-\s t)\big)^2 dx
+\frac{(u_--u_+)}{M}\int_0^T |\dot{X}_\n(t)|^2 dt\\
&\qquad +\frac{\e}{10}\n \int_0^T \int_\RR \big(((u^\n)(t,x)-(\util^\n)^{-X_\n}(x-\s t))_x\big)^2 dx dt
\le \Ecal_0 + \d,
\end{aligned}
\end{equation}
where \(\Ecal_0 \coloneqq \int_\RR \big(u^0(x)-\ubar(x)\big)^2 dx\).

\vspace{2mm}
We now show the existence of zero viscosity-dispersion limits, i.e., the convergence of \(\{u_\n\}_{\n>0}\).
Given the end states \(u_\pm\), we first choose a constant \(L>1\) such that 
\[
(u_+,u_-) \subset (-L/2,L/2).
\]
We define a continuous function \(\bar{\varphi}\) by 
\[
\bar{\varphi} \coloneqq
\begin{cases}
x & \text{if } \abs{x}\le L,\\
-L & \text{if } x< -L,\\
L & \text{if } x> L.
\end{cases}
\]
Then, we set 
\[
\uun \coloneqq \bar{\varphi}(u^\n), \qquad
u_e^\n = u^\n - \uun.
\]
Since \(|u_e^\n| \le \max\{(-u^\n-L)_+, (u^\n-L)_+\}\) and \(-L<\util^\n<L\), we find that
\[
|u_e^\n| \le |u^\n-\util^\n(x-\s t-X_\n)|,
\]
which together with \eqref{uniform} implies that for any \(\n\in(0,\n_*)\),
\[
\int_\RR |u_e^\n|^2 dx \le \Ecal_0 +1.
\]
This establishes that \(\{u_e^\n\}_{\n>0}\) is bounded in \(L^\infty(0,T;L^2(\RR))\).
Moreover, since \(\{\uun\}_{\n>0}\) is bounded in \(L^\infty(0,T;L^\infty(\RR)) \subset L^\infty(0,T;L_{loc}^2(\RR))\), we obtain that \(\{u^\n\}_{\n>0}\) is bounded in \(L^\infty(0,T;L_{loc}^2(\RR))\).
Thus, there exists \(u_\infty \in L^\infty(0,T;L_{loc}^2(\RR))\) such that 
\[
u^\n \rightharpoonup u_\infty \quad \text{in } L^\infty(0,T;L_{loc}^2(\RR)).
\]
This justifies the zero viscosity-dispersion limits \eqref{zvdl}.

\subsubsection{Proof of \eqref{zvds}: Stability Estimates}
In the remainder of this section, \(C\) denotes a positive constant that may change from line to line and may depend on the prescribed states \(u_\pm\) and \(T>0\), but is independent of \(\Ecal_0\) and \(\n\).

\vspace{2mm}
To establish the stability estimate \eqref{zvds}, we first show the convergence of the shift functions \(X_\n\).
Thanks to \eqref{uniform}, for any \(\d\in(0,1)\), we choose \(\n_*>0\) such that for any \(\n\in(0,\n_*)\), 
\[
\int_0^T |\dot{X}_\n(t)|^2 dt \le C\Ecal_0+\d.
\]
Then, since \(|\dot{X}_\n(t)|\le 1 +|\dot{X}_\n(t)|^2\), \(\dot{X}_\n\) is uniformly bounded in \(L^1(0,T)\)---namely, we have 
\[
\int_0^T |\dot{X}_\n(t)| dt 
\le \int_0^T \big(1+|\dot{X}_\n(t)|^2\big) dt
\le C(\Ecal_0+1).
\]
Moreover, since \(|X_\n(t)| \le \int_0^t |\dot{X}_\n(s)|ds \le C(\Ecal_0+1)\), we find that 
\[
\int_0^T |X_\n(t)| dt \le C(\Ecal_0+1).
\]
Thus, the compactness of BV (e.g., \cite[Theorem 3.23]{AFP}) implies that there exists \(X_\infty\in BV(0,T)\) such that up to subsequence,
\begin{equation} \label{sfts-conv}
X_\n \to X_\infty \quad \text{in } L^1(0,T), \quad \text{as } \n\to0.
\end{equation}
This establishes the convergence of \(X_\n\).

\vspace{2mm}
We now proceed to prove the stability estimate \eqref{zvds}.
To this end, we first introduce a smooth mollifier \(\phi_1\) which satisfies \(\phi\ge0\), \(\int_\RR \phi(x)dx=1\) and \(\supp \phi_1 \subset [-1,1]\).
Then, for any \(t\in(0,T)\) and any \(\et>0\), we set 
\[
\phi_{t,\et}(s) \coloneqq \frac{1}{\et}\phi_1\big(\frac{s-t}{\et}\big).
\]
We consider 
\begin{align*}
&\frac{1}{2} \int_{(0,T)\times\RR} \phi_{t,\et}(s) \big(u^\n(s,x)-\ubar(x-\s s-X_\infty(s))\big)^2 dx ds\\
&\qquad
=\frac{1}{2} \int_{u^\n\in[-L,L]} \phi_{t,\et}(s) \big(\uun(s,x)-\ubar(x-\s s-X_\infty(s))\big)^2 dx ds\\
&\qquad\qquad
+\frac{1}{2} \int_{u^\n\notin [-L,L]} \phi_{t,\et}(s) \big(u^\n(s,x)-\ubar(x-\s s-X_\infty(s))\big)^2 dx ds
\eqqcolon J_1+J_2.
\end{align*}
When \(u^\n\notin [-L,L]\), since \(|u^\n(s,x)-\ubar(x-\s t-X_\infty(s))| \le 3|u^\n(s,x)-\util(x-\s s-X_\n(s))|\), we get 
\[
J_2
\le \frac{9}{2} \int_{(0,T)\times\RR} \phi_{t,\et}(s) \big(u^\n(s,x)-\util(x-\s s-X_\n(s))\big)^2 dx ds
\le C(\Ecal_0+\d)
\]
for each \(\n\in(0,\n_*)\).
To control \(J_1\), we decompose \(J_1\) it in the following way:
\begin{align*}
J_1
&\le \frac{1}{2} \int_{(0,T)\times\RR} \phi_{t,\et}(s) \big(\uun(s,x)-\ubar(x-\s s-X_\infty(s))\big)^2 dx ds \\
&=\frac{1}{2} \int_{(0,T)\times\RR} \phi_{t,\et}(s) \big(\uun(s,x)-\util^\n(x-\s s-X_\n(s))\big)^2 dx ds\\
&
+\frac{1}{2} \int_{(0,T)\times\RR} \phi_{t,\et}(s)
\Big[\big(\uun(s,x)-\ubar(x-\s s-X_\infty(s))\big)^2
-\big(\uun(s,x)-\util^\n(x-\s s-X_\n(s))\big)^2\Big]dx ds.
\end{align*}
Then, using \eqref{uniform}, we find that for any \(\n\in(0,\n_*)\), 
\[
\frac{1}{2} \int_{(0,T)\times\RR} \phi_{t,\et}(s) \big(\uun(s,x)-\util^\n(x-\s s-X_\n(s))\big)^2 dx ds
\le C(\Ecal_0+\d).
\]
Moreover, for the second term, we note that
\begin{align*}
&\abs{\big(\uun(s,x)-\ubar(x-\s s-X_\infty(s))\big)^2
-\big(\uun(s,x)-\util^\n(x-\s s-X_\n(s))\big)^2}\\
&\le C \Big|\util^\n(x-\s s-X_\n(s))-\ubar(x-\s s-X_\infty(s))\Big|\\
&\le C \Big|\util^\n(x-\s s-X_\n(s))-\ubar(x-\s s-X_\n(s))\Big|
+ C \Big|\ubar(x-\s s-X_\n(s))-\ubar(x-\s s-X_\infty(s))\Big|.
\end{align*}
Then, since we have \(\norm{\util^\n-\ubar}_{L^1(\RR)}\le C\n\) and 
\[
\norm{\ubar(x-\s s-X_\n(s))-\ubar(x-\s s-X_\infty(s))}_{L^1(\RR)}=(u_--u_+)|X_\n-X_\infty|,
\]
we find that
\[
V_\n\coloneqq
\frac{1}{2} \int_{(0,T)\times\RR} \phi_{t,\et}(s)
\Big[\big(\uun(s,x)-\ubar(x-\s s-X_\infty(s))\big)^2
-\big(\uun(s,x)-\util^\n(x-\s s-X_\n(s))\big)^2\Big]dx ds
\]
converges to \(0\) as \(\n \to 0\).
In summary, we show that for any \(\n\in(0,\n_*)\),
\[
\frac{1}{2} \int_{(0,T)\times\RR} \phi_{t,\et}(s) \big(u^\n(s,x)-\ubar(x-\s s-X_\infty(s))\big)^2 dx ds
\le C(\Ecal_0+\d) + V_\n
\]
where \(V_\n\to0\) when \(\n\to0\).
This together with the weak lower semi-continuity of the \(L^2\)-norm (for instance, see \cite{Evans-CBMS}) shows that
\[
\frac{1}{2} \int_{(0,T)\times\RR} \phi_{t,\et}(s) \big(u_\infty(s,x)-\ubar(x-\s s-X_\infty(s))\big)^2 dx ds
\le C(\Ecal_0+\d).
\]
Finally, since \(\d\in(0,1)\) is arbitrary, taking \(\et\to0\), we conclude that
\[
\frac{1}{2} \int_\RR \big(u_\infty(t,x)-\ubar(x-\s t-X_\infty(t))\big)^2 dx
\le C\Ecal_0.
\]
This completes the proof of \eqref{zvds}.

\subsubsection{Proof of \eqref{X-con}: Control of the Shift Function \(X_\infty\)}
It remains to establish \eqref{X-con}.
Since only weak convergence of \(u^\n\) is available, the limit \(u_\infty\) does not necessarily satisfy the inviscid Burgers equation.
By contrast, in the case of the barotropic Navier--Stokes equations in the Lagrangian mass coordinates, the continuity equation is linear.
As a result, inviscid limits of Navier--Stokes solutions satisfy the mass conservation law of the Euler equations.
Therefore, a different approach is required in the present setting, and we adopt the idea developed in \cite{EEK-BNSF}.

\vspace{2mm}
First, we choose \(r=r(\Ecal_0,T)>1\) such that \(\norm{X_\n}_{L^\infty}\le r-\abs{\s}T\) for any \(\n\in(0,\n_*)\).
It suffices to take \(r\) so that \(r \le C(\Ecal_0+T+1)\).
We also consider a nonnegative smooth function \(\psi\colon\RR\to\RR\) such that \(\psi(x)=\psi(-x)\) for \(x\in\RR\), \(\psi'(x)\le 0\) for \(x\ge0\), and \(\abs{\psi'(x)}\le \frac{2}{r}\), \(\abs{\psi''(x)}\le \frac{4}{r^2}\), \(\abs{\psi'''(x)} \le \frac{8}{r^3}\) and
\[
\psi(x)=
\begin{cases}
1 &\text{if } \abs{x}\le r,\\
0 &\text{if } \abs{x}\ge 2r.
\end{cases}
\]
We then define a nonnegative smooth function \(\th\colon\RR\to\RR\) such that \(\th(s)=\th(-s)\), \(\int_\RR \th(s)ds=1\) and \(\supp \th \subset [-1,1]\).
For any \(\et>0\), we set 
\[
\th_\et(s)=\frac{1}{\et}\th\Big(\frac{s-\et}{\et}\Big).
\]
Moreover, for any \(t\in(0,T)\) and any \(\et\in(0,t/2)\), we consider 
\[
\phi_{t,\et}(s) \coloneqq \int_0^s \big(\th_\et(\t)-\th_\et(\t-t)\big)d\t.
\]
We recall that \(u^\n\in\Xcal_T\) satisfies 
\[
(u^\n)_t + \Big(\frac{(u^\n)^2}{2}\Big)_x = \n \e (u^\n)_{xx}- \n^2 \d (u^\n)_{xxx},
\]
and thus, we obtain
\begin{equation} \label{unueq}
\begin{aligned}
&\int_{[0,T]\times\RR} \Big(\phi_{t,\et}'(s) \psi(x) u^\n(s,x)
+\phi_{t,\et}(s) \psi'(x) \Big(\frac{(u^\n)^2}{2}\Big)(s,x) \Big) dx ds\\
&\hspace{10mm}+ \n \e \int_{[0,T]\times\RR} \phi_{t,\et}(s) \psi''(x) u^\n dx ds
+ \n^2 \d \int_{[0,T]\times\RR} \phi_{t,\et}(s) \psi'''(x) u^\n dx ds
=0. 
\end{aligned}
\end{equation}
Note that \(\int_\RR \psi(x)u^\n(s,x) dx\) is continuous in \(s\):
\begin{align*}
&\abs{\int_\RR \psi(x)u^\n(s,x)dx-\int_\RR \psi(x) u^\n(s',x)dx}
=\abs{\int_\RR \int_{s'}^s \psi(x) (u^\n)_t(\t,x) d\t dx}\\
&\qquad
=\abs{\int_{s'}^s \int_\RR \psi'(x) \Big(-\frac{1}{2}(u^\n)^2 + \n \e (u^\n)_x\Big)
+\psi''(x) \n^2 \d (u^\n)_x dx d\t}\\
&\qquad
\le C(\e,\d,\n,r,T) \norm{u^\n}_{L^\infty(0,T;H^1(\RR))} \abs{s-s'}^\frac{1}{2}.
\end{align*}
Since \(u^\n\) and \((u^\n)^2\) are locally integrable and \(\psi,\psi',\psi''\) and \(\psi'''\) are bounded, we apply the dominated convergence theorem to pass to the limit \(\et\to0\) and find that
\begin{multline*}
\int_\RR \psi(x) u_0^\n(x)dx
-\int_\RR \psi(x) u^\n(t,x)dx
+\int_0^t \int_\RR \psi'(x) \frac{(u^\n)^2}{2}(s,x) dxds\\
=-\n \e \int_0^t \int_\RR \psi''(x) u^\n(s,x) dxds
-\n^2 \d \int_0^t \int_\RR \psi'''(x) u^\n(s,x) dxds
\eqqcolon J_1+J_2.
\end{multline*}
We decompose the left-hand side as follows:
\begin{align*}
0&=J_1+J_2
\underbrace{+\int_\RR \psi(x) \big[u^\n(t,x)-\util^\n(x-\s t-X_\n(t))\big]dx}_{\eqqcolon J_3}
\underbrace{+\int_\RR \psi(x) \big[\util^\n(x)-u_0^\n(x)\big] dx}_{\eqqcolon J_4}\\
&\qquad\underbrace{+\int_\RR \psi(x) \big[\util^\n(x-\s t-X_\n(t))-\util^\n(x)\big]dx}_{\eqqcolon J_5}
\underbrace{-\int_0^t\int_\RR \psi'(x)\frac{(u^\n-\util^\n)^2}{2}dxds}_{\eqqcolon J_6}\\
&\qquad\underbrace{-\int_0^t\int_\RR \psi'(x)\util^\n(u^\n-\util^\n)dxds}_{\eqqcolon J_7}
\underbrace{-\int_0^t\int_\RR \psi'(x)\frac{(\util^\n)^2}{2}dxds}_{\eqqcolon J_8},
\end{align*}
where \(u^\n=u^\n(s,x)\) and \(\util^\n=\util^\n(x-\s s-X_\n(s))\) in \(J_6\), \(J_7\) and \(J_8\).

To proceed further, we first note that \eqref{sfts-conv} ensures the \(L^1\)-convergence \(X_\n\to X_\infty\), and hence, up to a subsequence, we also obtain almost everywhere pointwise convergence.
In what follows, let \(\d\in(0,1)\) be arbitrarily, and let \(\n_*\) denote the corresponding constant given by \eqref{uniform}.

We now analyze the right-hand side term by term.
For \(J_1\) and \(J_2\), since \(\supp \psi \subset [-2r,2r]\),
\begin{equation} \label{J12}
\abs{J_1} + \abs{J_2} 
\le C\n \int_0^t \int_{-2r}^{2r} 1+ \big(u^\n(s,x)-\util^\n(x-\s s-X_\n(s))\big)^2 dxds
\le C(\Ecal_0+1) \n.
\end{equation}
For \(J_3\) and \(J_4\), since we have 
\[
\int_\RR (\psi(x))^2 dx
\le \int_\RR \psi(x) dx
\le 4r \le C(\Ecal_0+1),
\]
we apply \eqref{uniform} to find that for any \(\n\in(0,\n_*)\),
\begin{equation} \label{J34}
\begin{aligned}
\abs{J_3} &\le \sqrt{\int_\RR (\psi(x))^2 dx}
\sqrt{\int_\RR \big(u^\n(t,x)-\util^\n(x-\s t-X_\n(t))\big)^2 dx}
\le C \sqrt{(\Ecal_0+1)(\Ecal_0+\d)},\\
\abs{J_4} &\le \sqrt{\int_\RR (\psi(x))^2 dx}
\sqrt{\int_\RR \big(\util^\n(x)-u_0^\n(x)\big)^2 dx}
\le C \sqrt{(\Ecal_0+1)(\Ecal_0+\d)}.
\end{aligned}
\end{equation}
We postpone the analysis of \(J_5\) and \(J_8\) until the final step.
For \(J_6\) and \(J_7\), we observe that
\begin{equation} \label{J67}
\begin{aligned}
\abs{J_6}
&\le C\int_0^t 
\int_\RR \big(u^\n(s,x)-\util^\n(x-\s s-X_\n(s))\big)^2 dx ds
\le C (\Ecal_0+\d),\\
\abs{J_7}
&\le C\int_0^t \sqrt{\int_\RR (\psi'(x))^2 dx}
\sqrt{\int_\RR \big(u^\n(s,x)-\util^\n(x-\s s-X_\n(s))\big)^2 dx} ds
\le C \sqrt{\Ecal_0+\d}.
\end{aligned}
\end{equation}
We now deal with \(J_5\) and \(J_8\).
We split \(J_5\) into three pieces:
\begin{align*}
J_5
&=\int_\RR \psi(x) \big[\util^\n(x-\s t-X_\n(t))-\ubar(x-\s t-X_\n(t))\big]dx\\
&+\int_\RR \psi(x) \big[\ubar(x-\s t-X_\n(t))-\ubar(x)\big]dx
+\int_\RR \psi(x) \big[\ubar(x)-\util^\n(x)\big]dx.
\end{align*}
Note that \(\norm{\util^\n-\ubar}_{L^1(\RR)}=\n\norm{\util-\ubar}_{L^1(\RR)} \le C\n\).
In addition, using the choice of \(r\), we also have
\[
\int_\RR \psi(x) \big[\ubar(x-\s t-X_\n(t))-\ubar(x)\big]dx
=(X_\n(t)+\s t)(u_--u_+).
\]
Thus, we have
\begin{equation} \label{J5}
\abs{J_5-(X_\n(t)+\s t)(u_--u_+)} \le C\n.
\end{equation}
Meanwhile, we split \(J_8\) into two pieces:
\begin{align*}
J_8
&=-\int_0^t\int_\RR \psi'(x) \frac{1}{2}\Big(\big(\util^\n(x-\s s-X_\n(s))\big)^2
-\big(\ubar(x-\s s-X_\n(s))\big)^2\Big) dx ds\\
&\qquad
-\int_0^t\int_\RR \psi'(x) \frac{1}{2}\big(\ubar(x-\s s-X_\n(s))\big)^2 dx ds.
\end{align*}
Thanks to the fact that \(\norm{\util^\n-\ubar}_{L^1(\RR)}\le C\n\), the first term on the r.h.s. could be bounded by \(C\n\).
It also holds that 
\[
-\int_0^t\int_\RR \psi'(x) \frac{1}{2}\big(\ubar(x-\s s-X_\n(s))\big)^2 dx ds
=-\frac{t}{2}(u_-^2-u_+^2) = -\s t (u_--u_+).
\]
Hence, we obtain
\begin{equation} \label{J8}
\abs{J_8+\s t (u_--u_+)} \le C\n.
\end{equation}
Thus, gathering all from \eqref{J12} to \eqref{J8}, we conclude that for any \(\n\in(0,\n_*)\),
\[
\abs{X_\n(t)(u_--u_+)}
\le C(\Ecal_0+1)\n + C \sqrt{(\Ecal_0+1)(\Ecal_0+\d)}.
\]
Therefore, for almost every \(t\in(0,T)\), it follows that 
\[
\abs{X_\infty(t)(u_--u_+)}
\le C(\Ecal_0 + \sqrt{\Ecal_0}).
\]
This completes \eqref{X-con} and Theorem \ref{thm:limit}. \qed

\appendix
\section{Global Existence of $H^1$ Solutions} \label{sec:exist}
\setcounter{equation}{0}
This section concerns the global existence of solution to \eqref{burgers} in the class \(\Xcal_T\).
We are interested in solutions with different asymptotic states as \(\abs{x}\to\infty\).
Throughout this section, the coefficients \(\e\) and \(\d\) do not affect our analysis; hence, we may assume \(\e=\d=1\).

Assuming that the initial data \(u_0\) satisfying \(u_0-\uubar\in H^1(\RR)\), we aim to show the global existence of solutions to \eqref{burgers} such that for any \(T>0\), \(u-\uubar \in \Scal_T\), where \(\Scal_T\) is given by
\[
\Scal_T \coloneqq \{f\colon\RR^+\times\RR\to\RR\mid f\in C([0,T];H^1(\RR)) \cap L^2(0,T;H^2(\RR))\}.
\]
We remark that \(u-\uubar\in\Scal_T\) is equivalent to \(u\in\Xcal_T\).
The global existence of solutions follows from local well-posedness and an a priori estimate through a standard continuation argument.
We first present the local well-posedness lemma.

\begin{lemma}\label{lem:LWP}
Let \(u_-\) and \(u_+\) be two given constant states.
Let \(\uubar\) be a smooth monotone function satisfying \eqref{def:ubar}.
Then, for any \(M_0>0\), there exists \(\hat{T}>0\) such that the following holds:
For any initial datum \(u_0\) which satisfies \(u_0-\uubar \in H^1(\RR)\) and
\[
\norm{u_0-\uubar}_{H^1(\RR)} \le M_0,
\]
there exists a unique solution \(u\) to \eqref{burgers} on \([0,\hat{T}]\) subject to the initial value \(u_0\) such that 
\[
u-\uubar \in C([0,\hat{R}];H^1(\RR)) \cap L^2(0,\hat{T};H^2(\RR)), \qquad
\sup_{t\in[0,\hat{T}]} \norm{u(t)-\uubar}_{H^1(\RR)} \le 2M_0.
\]
\end{lemma}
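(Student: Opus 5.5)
We prove Lemma \ref{lem:LWP} by a contraction-mapping argument in the perturbation variable \(v\coloneqq u-\uubar\), with \(\e=\d=1\). Substituting \(u=v+\uubar\) into \eqref{burgers} gives
\[
v_t + v_{xxx} - v_{xx} = -\Big(\frac{(v+\uubar)^2}{2}\Big)_x + \uubar_{xx} - \uubar_{xxx} \eqqcolon N(v),
\]
where the forcing \(\uubar_{xx}-\uubar_{xxx}\) is smooth and compactly supported in \([-1,1]\), hence in every \(H^k\). The linear part generates the semigroup \(S(t)\), whose Fourier multiplier is \(e^{-t(\xi^2 - i\xi^3)}\). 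It is a contraction on every \(H^k\) and satisfies the parabolic smoothing bound \(\norm{\partial_x S(t)f}_{L^2}\le C t^{-1/2}\norm{f}_{L^2}\), since \(|\xi| e^{-t\xi^2}\le Ct^{-1/2}\). We write the problem in Duhamel form,
\[
v(t)=S(t)v_0+\int_0^t S(t-\tau)N(v(\tau))\,d\tau,
\]
and seek a fixed point in the ball \(B=\{v\in C([0,\hat T];H^1):\sup_t\norm{v(t)}_{H^1}\le 2M_0\}\).

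To estimate the nonlinearity, we write \(N(v)=-\partial_x F(v)+g\), where \(F(v)=\tfrac12 v^2+\uubar v+\tfrac12\uubar^2\). The term \(\partial_x(\uubar^2/2)=\uubar\uubar_x\) is compactly supported, so we may absorb it into \(g\) and take \(F(v)=\tfrac12v^2+\uubar v\). Since \(H^1(\RR)\) is an algebra and \(\uubar,\uubar_x\) are bounded, we have \(\norm{F(v)}_{H^1}\le C(\norm{v}_{H^1}^2+\norm{v}_{H^1})\) and \(\norm{F(v)-F(w)}_{H^1}\le C(1+\norm{v}_{H^1}+\norm{w}_{H^1})\norm{v-w}_{H^1}\). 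Because \(\partial_x\) commutes with \(S\), the smoothing bound gives
\[
\Big\|\int_0^tS(t-\tau)\partial_xF\,d\tau\Big\|_{H^1}\le C\int_0^t(t-\tau)^{-1/2}\norm{F(\tau)}_{H^1}d\tau\le C\sqrt{t}\,(M_0^2+M_0).
\]
The forcing \(g\) contributes at most \(Ct\). We therefore choose \(\hat T\) small, depending only on \(M_0\), so that the Duhamel map sends \(B\) into itself and is a contraction. Continuity in time with values in \(H^1\) follows from strong continuity of \(S\) and of the Duhamel integral. Uniqueness in the stated class follows from the same Lipschitz estimate combined with Gronwall's inequality.

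It remains to show \(v\in L^2(0,\hat T;H^2)\). For this we use the energy identity at the \(H^1\) level. Multiply the equation for \(v_x\) by \(v_x\) and integrate. The dispersive term \(\int v_{xxxx}v_x\) vanishes, and dissipation gives \(\norm{v_x(t)}^2+2\int_0^t\norm{v_{xx}}^2\le \norm{v_{0,x}}^2+2\int_0^t|\langle \partial_xF, v_{xxx}\rangle|+\ldots\). We rewrite \(\langle \partial_x^2 F, v_{xx}\rangle\) as \(-\langle\partial_xF,v_{xxx}\rangle\), so that after integration by parts it is bounded by \(\norm{\partial_xF}_{L^2}\norm{v_{xx}}_{L^2}\). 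Together with \(\norm{\partial_xF}_{L^2}\le C(M_0^2+M_0)\), Young's inequality then yields the \(L^2_tH^2_x\) bound.

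To make these integrations by parts rigorous, we first perform the computation for mollified initial data, whose solutions are smooth by the same fixed-point argument in \(H^3\), and then pass to the limit using the Lipschitz dependence on the data.

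The main obstacle is the derivative loss in \(\partial_x(v^2)\) at the \(H^1\) level. It is handled here entirely by the viscous smoothing \(t^{-1/2}\), which is why \(\hat T\) depends on \(M_0\) but not on the dispersion. Since the bounds are uniform for \(\norm{v_0}_{H^1}\le M_0\), this gives the uniform existence time required for the continuation argument.
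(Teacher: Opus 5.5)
Your proposal is correct, but it takes a different route from the paper. The paper does not use the pointwise smoothing bound for \(S(t)\). It proves an energy estimate for the linear problem \(v_t+v_{xxx}-v_{xx}=f\) directly in \(C([0,T];H^1)\cap L^2(0,T;H^2)\). It then treats \(ww_x\), \(\uubar w_x\), \(\uubar_x w\) and the compactly supported source as forcing, gaining \(\sqrt{T}\) from H\"older in time, and runs a Picard iteration in that full space, so the \(L^2H^2\) regularity comes out of the iteration itself.

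You instead write the nonlinearity in divergence form and absorb the derivative loss with \(\norm{\partial_x S(t)}_{L^2\to L^2}\le Ct^{-1/2}\), closing the fixed point in \(C([0,\hat T];H^1)\) alone. This makes the bound \(\sup_t\norm{v(t)}_{H^1}\le 2M_0\) immediate, since the semigroup is a contraction on \(H^1\); the paper's iteration controls the full \(\Scal_T\)-norm with an untracked constant in front of \(\norm{w_0}_{H^1}\). The price is a separate step for \(L^2H^2\).

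Your route also sidesteps a derivative-counting slip in the paper's Step 2. There \(\langle f,-v_{xx}\rangle\) with \(f\in H^{-1}\) is bounded by \(\norm{f}_{H^{-1}}\norm{v}_{H^2}\), which in fact costs \(\norm{v}_{H^3}\). The linear estimate does hold with forcing in \(L^2(0,T;L^2)\), and the actual terms lie there (e.g.\ \(\norm{vv_x}_{L^2}\le\norm{v}_{L^\infty}\norm{v_x}_{L^2}\)). So the paper's scheme is repaired by replacing \(H^{-1}\) with \(L^2\) throughout.

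Two small points in your write-up. First, in the energy paragraph the pairings are off by one derivative. Multiplying the \(v_x\)-equation by \(v_x\) produces \(-\langle\partial_x^2F,v_x\rangle=\langle\partial_xF,v_{xx}\rangle\), and it is this term that is bounded by \(\norm{\partial_xF}_{L^2}\norm{v_{xx}}_{L^2}\). The terms \(\langle\partial_xF,v_{xxx}\rangle\) and \(\langle\partial_x^2F,v_{xx}\rangle\) you wrote would not close.

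Second, for mollified data the \(H^3\) fixed-point time depends on the \(H^3\) norm of the data, which blows up as the mollification is removed. You therefore need persistence of \(H^3\) regularity on all of \([0,\hat T]\), for instance from the tame bound \(\norm{F(v)}_{H^3}\le C(1+\norm{v}_{H^1})\norm{v}_{H^3}\) and a singular Gronwall inequality. You can avoid the regularization altogether. Since \(N(v)\in L^\infty(0,\hat T;L^2)\), the bound \(\norm{v_{xx}}_{L^2(0,\hat T;L^2)}\le C\norm{v_{0,x}}_{L^2}+\norm{N(v)}_{L^2(0,\hat T;L^2)}\) follows from the Fourier form of the Duhamel formula and Young's inequality in time, because \(\int_0^\infty\xi^2e^{-t\xi^2}\,dt=1\).
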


\begin{pf}
The proof of the lemma relies on a classical method, but to make the paper self-contained, we provide it in the below.
The proof consists of five steps.
\step{1} We first define the perturbation variable 
\[
w(t,x) \coloneqq u(t,x)-\ubar(x).
\]
The initial condition becomes \(w(0)=w_0\coloneqq u_0-\ubar\in H^1(\RR)\), and the \(w\) variable satisfies
\begin{equation} \label{w-eq}
w_t + w_{xxx} - w_{xx} + ww_x - \ubar w_x - \ubar_x w
= F,
\end{equation}
where \(F\coloneqq -\ubar \ubar_x + \ubar_{xxx} - \ubar_{xx}\).
It is obvious that \(F\in C_c^\infty(\RR) \subset H^1(\RR) \subset H^{-1}(\RR)\).

We now introduce a linear differential operator \(G\): 
\[
G\coloneqq -\rd_x^3 +\rd_x^2, \qquad 
D(G) = H^3(\RR).
\]
Notice that \(G\) generates a contraction \(C_0\)-semigroup \(\{S(t)\}_{t\ge0}\) on \(L^2(\RR)\) (see \cite[Proposition 2.1]{CCKR-AIHP}).

\step{2}
We consider the linear problem 
\[
v_t-v_{xxx}+v_{xx} = f, \qquad
v(0,x)=v_0(x).
\]
In this step, we show that for any \(v_0\in H^1(\RR)\) and any \(f\in L^2(0,T;H^{-1}(\RR))\), the following holds:
the mild solution satisfies \(v\in C([0,T];H^1(\RR))\cap L^2(0,T;H^2(\RR))\), and
\begin{equation} \label{lin-energy}
\norm{v}_{L^\infty(0,T;H^1(\RR))}^2
+\norm{v}_{L^2(0,T;H^2(\RR))}^2
\le C\big(\norm{v_0}_{H^1(\RR)}^2+\norm{f}_{L^2(0,T;H^{-1}(\RR))}^2\big).
\end{equation}
To this end, we first construct the mild solution via the semigroup and the Duhamel formula: 
\begin{equation} \label{mild-sol}
v(t)=S(t)v_0
+\int_0^t S(t-s)f(s) ds.
\end{equation}
The strong continuity of the semigroup ensures that the initial condition is satisfied.
We note that 
\[
\frac{1}{2}\frac{d}{dt} \norm{v}_{L^2(\RR)}^2
+ \norm{v_x}_{L^2(\RR)}^2
= <f,v>_{H^{-1},H^1}.
\]
Moreover, it holds that 
\[
\frac{1}{2}\frac{d}{dt} \norm{v_x}_{L^2(\RR)}^2
+ \norm{v_{xx}}_{L^2(\RR)}^2
=<f,-v_{xx}>_{H^{-1},H^1}.
\]
Then, it follows that for some small constant \(\et\),
\[
\frac{1}{2}\frac{d}{dt} \norm{v}_{H^1(\RR)}^2
+ \norm{v_x}_{L^2(\RR)}^2
+ \norm{v_{xx}}_{L^2(\RR)}^2
\le C\norm{f}_{H^{-1}(\RR)} \norm{v}_{H^2(\RR)}
\le \et\norm{v}_{H^2(\RR)}^2
+C_\et \norm{f}_{H^{-1}(\RR)}^2.
\]
We then apply Gronwall's lemma to establish \eqref{lin-energy}.
As the mild solution \eqref{mild-sol} is a priori only in \(C([0,T];H^{-1}(\RR))\), the preceding formal calculations could be rigorously justified by approximating the data with smooth functions and passing to the limit via a standard density argument.

\step{3}
In this step, prior to introducing the iteration scheme, we derive estimates for functions in the class \(\Scal_T\).
These estimates allow us to apply the result of \textit{Step 2} to the approximate solutions arising from the iteration scheme, thereby obtaining uniform bounds.

Let \(v\) be any function in \(\Scal_T\).
Then, using the following basic functional inequalities 
\begin{equation} \label{ftnal-ineq}
\norm{fg}_{H^{-1}(\RR)}
\le\norm{f}_{L^\infty(\RR)}\norm{g}_{L^2(\RR)}, \qquad
\norm{f}_{L^\infty(\RR)} \le C\norm{f}_{H^1(\RR)},
\end{equation}
we find that
\begin{equation} \label{non-1}
\norm{v v_x}_{L^2(0,T;H^{-1}(\RR))}
\le C\sqrt{T} \norm{v}_{L^\infty(0,T;H^1(\RR))}^2,
\end{equation}
and
\begin{equation} \label{non-2}
\norm{\ubar v_x}_{L^2(0,T;H^{-1}(\RR))}
+\norm{\ubar_x v}_{L^2(0,T;H^{-1}(\RR))}
\le C\sqrt{T} \norm{v}_{L^\infty(0,T;H^1(\RR))}.
\end{equation}
On the other hand, since \(F\in C_c^\infty(\RR) \subset H^{-1}(\RR)\) and \(F\) is stationary, we have 
\begin{equation} \label{non-3}
\norm{F}_{L^2(0,T;H^{-1}(\RR))}
=\sqrt{T} \norm{F}_{H^{-1}(\RR)}.
\end{equation}
These estimates \eqref{non-1}-\eqref{non-3} will be crucially used later.

\step{4}
We now introduce the iteration scheme as follows.
Firstly, we set \(w^{(0)}(t,x)=w_0(x)\), and then, for each \(n\in\NN\cup\{0\}\), we define \(w^{(n+1)}\) to be the solution of
\[
\begin{cases}
(w^{(n+1)})_t + (w^{(n+1)})_{xxx} - (w^{(n+1)})_{xx}
= -w^{(n)}(w^{(n)})_x
-\ubar(w^{(n)})_x
-\ubar_x w^{(n)}
+F,\\
(w^{(n+1)})(0,x)=w_0(x).
\end{cases}
\]
We define \(\Phi(w^{(n)})=w^{(n+1)}\).
Notice that the map \(\Phi\) is well-defined on \(\Scal_T\) by \textit{Step 2} and \textit{Step 3}.
Then, using \eqref{lin-energy} and \eqref{non-1}-\eqref{non-3}, we find that for each \(n\in\NN\cup\{0\}\),
\[
\|w^{(n+1)}\|_{\Scal_T}
\le C\big( \norm{w_0}_{H^1(\RR)}
+\sqrt{T} (\|w^{(n)}\|_{L^\infty(0,T;H^1(\RR))}^2
+\|w^{(n)}\|_{L^\infty(0,T;H^1(\RR))}+1)\big).
\]
This shows that there exist \(R>0\) and \(T>0\) such that \(R\le2M_0\) and
\[
\|w^{(n)}\|_{\Scal_T} \le R \quad
\Rightarrow \quad
\|w^{(n+1)}\|_{\Scal_T} \le R.
\]
This provides an uniform estimate.

\step{5} In this step, we show that \(\{w^{(n)}\}\) is a Cauchy sequence in \(\Scal_T\).
It suffices to prove that 
\begin{equation} \label{shrink}
\|w^{(n+1)}-w^{(n)}\|_{\Scal_T}
\le C\sqrt{T} (R+1) \|w^{(n)}-w^{(n-1)}\|_{\Scal_T}.
\end{equation}
If then, choosing small \(T>0\) so that \(C\sqrt{T} (R+1)<\frac{1}{2}\), we get a contraction and a Cauchy sequence.

Let \(z^{(n)}\coloneqq w^{(n+1)}-w^{(n)}\).
Then, it satisfies \(z^{(n)}(0,x)=0\) and 
\begin{align*}
&(z^{(n+1)})_t + (z^{(n+1)})_{xxx} - (z^{(n+1)})_{xx}\\
&\qquad= -z^{(n)}(w^{(n+1)})_x
-w^{(n)}(z^{(n)})_x
-\ubar(z^{(n)})_x
-\ubar_x z^{(n)}.
\end{align*}
To apply \eqref{lin-energy}, we need to establish an upper bound on the \(L_t^2 H_x^{-1}\) norm of the right-hand side.
Thanks to \eqref{ftnal-ineq}, we obtain
\begin{align*}
\|z^{(n)}(w^{(n+1)})_x\|_{L^2(0,T;H^{-1}(\RR))}
&\le C \sqrt{T} \|z^{(n)}\|_{L^\infty(0,T;H^1(\RR))}
\|w^{(n+1)}\|_{L^\infty(0,T;H^1(\RR))},\\
\|w^{(n)}(z^{(n)})_x\|_{L^2(0,T;H^{-1}(\RR))}
&\le C \sqrt{T} \|w^{(n)}\|_{L^\infty(0,T;H^1(\RR))}
\|z^{(n)}\|_{L^\infty(0,T;H^1(\RR))},\\
\|\ubar(z^{(n)})_x+\ubar_x z^{(n)}\|_{L^2(0,T;H^{-1}(\RR))}
&\le C \sqrt{T} \|z^{(n)}\|_{L^\infty(0,T;H^1(\RR))},
\end{align*}
from which, it follows that 
\[
\norm{\text{r.h.s.}}_{L^2(0,T;H^{-1}(\RR))} 
\le C\sqrt{T}(R+1) \|z^{(n)}\|_{\Scal_T}.
\]
We now apply \eqref{lin-energy} to find that 
\[
\|z^{(n+1)}\|_{\Scal_T}
\le C\sqrt{T}(R+1) \|z^{(n)}\|_{\Scal_T}.
\]
This is equivalent to \eqref{shrink}.
Thus, the sequence \(\{w^{(n)}\}\) is Cauchy in the Banach space \(\Scal_T\), and it converges to some \(w\in\Scal_T\).
Then, the contraction from \eqref{shrink} shows that \(\Phi\) is Lipschitz continuous, and so \(w\) is a fixed point, i.e., \(\Phi(w)=w\).
In view of \eqref{mild-sol}, \(w\) is now the mild solution, and \textit{Step 2} and \textit{Step 4} yield the desired conclusion.
Finally, the uniqueness follows from the contraction.
\end{pf}

\vspace{2mm}
We next state a lemma on a priori estimates.
\begin{lemma} \label{lem:apriori}
Let \(u_-\) and \(u_+\) be two given constant states.
Let \(\uubar\) be a smooth monotone function satisfying \eqref{def:ubar}.
Let \(u_0\) be an initial datum with \(u_0-\uubar \in H^1(\RR)\).
Then, if \(u\) is a solution of \eqref{burgers} on \([0,T_0)\) for some \(T_0>0\) such that \(u\in\Xcal_T\) (or equivalently \(u-\uubar\in\Scal_T\)) for all \(T\in(0,T_0)\), then there exists a constant \(C(T_0)\) such that 
\[
\sup_{t\in[0,T_0)} \norm{u-\uubar}_{H^1(\RR)} \le C(T_0).
\]
\end{lemma}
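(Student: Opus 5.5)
The plan is a two-level energy estimate for $w\coloneqq u-\uubar\in\Scal_T$: an $L^2$ bound for $w$, then an $H^1$ bound for $w_x$, with constants depending only on $T_0$, $\norm{w_0}_{H^1}$ and $\uubar$. Taking $\e=\d=1$ as in this appendix, $w$ satisfies \eqref{w-eq} with the compactly supported smooth forcing $F$. Each computation below is done first for smooth approximations and then passed to the limit by density, as in \textit{Step 2} of Lemma \ref{lem:LWP}.

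For the $L^2$ level, I would multiply \eqref{w-eq} by $w$ and integrate over $\RR$. The dispersive term vanishes, since $\int w\,w_{xxx}\,dx=0$, and so does the cubic term, since $\int w^2w_x\,dx=0$. The remaining terms combine as
\[
-\int(\uubar w_x+\uubar_x w)w\,dx=-\frac12\int\uubar_x w^2\,dx .
\]
This gives
\[
\frac12\frac{d}{dt}\norm{w}_{L^2}^2+\norm{w_x}_{L^2}^2\le \frac12\norm{\uubar_x}_{L^\infty}\norm{w}_{L^2}^2+\norm{F}_{L^2}\norm{w}_{L^2}.
\]
Gronwall's lemma then bounds $\sup_{[0,T_0)}\norm{w}_{L^2}^2+\int_0^{T_0}\norm{w_x}_{L^2}^2\,dt\le C(T_0)$. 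As an alternative I could invoke the $L^2$ contraction of Theorem \ref{thm:main} with $\util$ in place of $\uubar$, but the direct computation is simpler and avoids the restriction \eqref{cond1}.

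For the $H^1$ level, I would multiply \eqref{w-eq} by $-w_{xx}$. The dispersive term again integrates to zero, and the diffusion produces the good term $\norm{w_{xx}}_{L^2}^2$. The main obstacle is the nonlinear term. Integrating by parts gives $\int ww_xw_{xx}\,dx=-\frac12\int w_x^3\,dx$, and I would bound it by
\[
\Big|\int w_x^3\,dx\Big|\le\norm{w_x}_{L^\infty}\norm{w_x}_{L^2}^2\le C\norm{w_x}_{L^2}^{5/2}\norm{w_{xx}}_{L^2}^{1/2},
\]
using Gagliardo--Nirenberg. Young's inequality then turns this into $\frac14\norm{w_{xx}}_{L^2}^2+C\norm{w_x}_{L^2}^{10/3}$. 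I would write $\norm{w_x}_{L^2}^{10/3}=\norm{w_x}_{L^2}^{4/3}\cdot\norm{w_x}_{L^2}^2$, so the Gronwall coefficient is $\norm{w_x}_{L^2}^{4/3}$. This coefficient is integrable in time because $\int_0^{T_0}\norm{w_x}_{L^2}^2\,dt\le C$ from the first step and $4/3<2$, and Hölder's inequality on $[0,T_0]$ gives $\int_0^{T_0}\norm{w_x}_{L^2}^{4/3}\,dt\le C(T_0)$.

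The linear terms $\uubar w_x$, $\uubar_xw$ and the forcing $F$ are handled by Cauchy--Schwarz and absorbed into $\frac14\norm{w_{xx}}_{L^2}^2$, leaving at most $C(\norm{w}_{H^1}^2+1)$. Gronwall's lemma then yields $\sup_{[0,T_0)}\norm{w_x}_{L^2}\le C(T_0)$, and combined with the first step this gives the claimed bound on $\sup_{t\in[0,T_0)}\norm{u-\uubar}_{H^1(\RR)}$.
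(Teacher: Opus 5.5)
Your proposal is correct and follows essentially the paper's route: an $L^2$ energy estimate for $u-\uubar$ closed by Gronwall, then an $H^1$ estimate whose Gronwall weight is time-integrable thanks to the dissipation bound $\int_0^{T_0}\norm{w_x}_{L^2}^2\,dt\le C$ from the first step. The only difference is in the cubic term, and both versions close: the paper keeps it as $\int u u_x u_{xx}\,dx\le\frac12\norm{u_{xx}}_{L^2}^2+C\norm{u}_{L^\infty}^2\norm{u_x}_{L^2}^2$ with weight $\norm{u}_{L^\infty}^2\lesssim 1+\norm{w}_{L^2}\norm{w_x}_{L^2}$, whereas you rewrite it as $-\frac12\int w_x^3\,dx$ and use Gagliardo--Nirenberg on $w_x$ to obtain the weight $\norm{w_x}_{L^2}^{4/3}$.
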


\begin{pf}
Throughout the proof, \(C\) denotes a positive constant that may vary from line to line and depend on the initial datum \(\norm{u_0-\uubar}_{H^1(\RR)}\), the given states \(u_-\) and \(u_+\), and \(T_0\), but is independent of \(T\in(0,T_0)\).

\vspace{2mm}
We employ the standard energy method.
We first observe from \eqref{burgers} that
\begin{equation} \label{L2-tot}
\frac{1}{2}\frac{d}{dt}\int_\RR (u-\uubar)^2 dx
=-\int_\RR (u-\uubar) u u_x dx
+\int_\RR (u-\uubar) u_{xx} dx
-\int_\RR (u-\uubar) u_{xxx} dx.
\end{equation}
We analyze the right-hand side on a term-by-term basis.
The first term reduces to
\begin{align*}
-\int_\RR (u-\uubar) u u_x dx
&=-\int_\RR (u-\uubar)^2(u-\uubar)_x dx
-\int_\RR \uubar(u-\uubar)(u-\uubar)_x dx\\
&\qquad
-\int_\RR (u-\uubar)^2 \uubar_x dx
-\int_\RR (u-\uubar) \uubar \uubar_x dx\\
&=-\frac{1}{2}\int_\RR (u-\uubar)^2 \uubar_x dx
-\int_\RR (u-\uubar) \uubar \uubar_x dx.
\end{align*}
Note that \(\uubar\) and \(\uubar_x\) are bounded and \(\supp \uubar_x \subset [-1,1]\).
Then, using Young's inequality, we obtain 
\begin{equation} \label{L2-first}
-\int_\RR (u-\uubar) u u_x dx
\le C \int_\RR (u-\uubar)^2 dx + C.
\end{equation}
Then, we apply integration by parts to the second term: 
\begin{equation} \label{L2-sec}
\int_\RR (u-\uubar) u_{xx} dx
=-\int_\RR (u_x)^2 dx
+\int_\RR \uubar_x u_x dx
\le -\frac{1}{2}\int_\RR (u_x)^2 dx
+C.
\end{equation}
We also find that 
\begin{equation} \label{L2-third}
\begin{aligned}
-\int_\RR (u-\uubar) u_{xxx} dx
&=-\int_\RR (u-\uubar) (u-\uubar)_{xxx} dx
-\int_\RR (u-\uubar) \uubar_{xxx} dx\\
&=-\int_\RR (u-\uubar) \uubar_{xxx} dx
\le C\int_\RR (u-\uubar)^2 dx + C.
\end{aligned}
\end{equation}
Gathering \eqref{L2-tot}, \eqref{L2-first}, \eqref{L2-sec} and \eqref{L2-third}, we obtain 
\[
\frac{1}{2}\frac{d}{dt}\int_\RR (u-\uubar)^2 dx
+\frac{1}{2}\int_\RR (u_x)^2 dx
\le C\int_\RR (u-\uubar)^2 dx + C.
\]
Then, Gronwall's lemma implies that
\begin{equation} \label{L2-E}
\sup_{t\in[0,T]}\int_\RR (u-\uubar)^2 dx
+\int_0^T \int_\RR (u_x)^2 dx dt
\le C.
\end{equation}

Now we perform the energy method in order to obtain a bound on the derivative.
To this end, using integration by parts, we observe from \eqref{burgers} that
\begin{align*}
\frac{1}{2}\frac{d}{dt}\int_\RR (u_x)^2 dx
&=-\int_\RR u u_x u_{xx} dx
-\int_\RR (u_x)^3 dx
+\int_\RR u_x u_{xxx} dx
-\int_\RR u_x u_{xxxx} dx\\
&=\int_\RR u u_x u_{xx} dx
-\int_\RR (u_{xx})^2 dx.
\end{align*}
Since it holds that
\[
\int_\RR u u_x u_{xx} dx
\le \frac{1}{2}\int_\RR (u_{xx})^2 dx
+ C\int_\RR u^2 (u_x)^2 dx
\le \frac{1}{2}\int_\RR (u_{xx})^2 dx
+ C\norm{u}_{L^\infty}^2 \int_\RR (u_x)^2 dx,
\] 
we obtain 
\[
\frac{1}{2}\frac{d}{dt}\int_\RR (u_x)^2 dx
+\frac{1}{2}\int_\RR (u_{xx})^2 dx
\le C\norm{u}_{L^\infty(\RR)}^2 \int_\RR (u_x)^2 dx.
\]
On the other hand, it follows from \eqref{L2-E} that 
\[
\norm{u}_{L^2(0,T;L^\infty(\RR))}\le C.
\]
Then, Gronwall's lemma shows that 
\[
\sup_{t\in[0,T]}\int_\RR (u_x)^2 dx
+\int_0^T \int_\RR (u_{xx})^2 dx dt
\le C.
\]
This together with \eqref{L2-E} yields the desired conclusion.
\end{pf}

\vspace{2mm}
The combination of these two lemmas allows us to demonstrate the global-in-time existence via a standard continuation argument.
Therefore, for any \(u_0\) with \(u_0-\uubar \in H^1(\RR)\), there exists a unique global-in-time solution \(u\in\Xcal_T\).

\section{Proof of Lemma \ref{lem:RHS}} \label{section_proof}
\setcounter{equation}{0}
In this section, we show Lemma \ref{lem:RHS}.
First, using \eqref{burgers} and \eqref{visS}, we observe that
\[
\big(u-\util^{-X}\big)_t = \dot{X}(t) (\util')^{-X} - \bigg(\frac{u^2}{2}-\frac{(\util^{-X})^2}{2}\bigg)_\x + \big(u-\util^{-X}\big)_{\x\x} - \d\big(u-\util^{-X}\big)_{\x\x\x}.
\]
Then, it follows that
\begin{align*}
\frac{d}{dt}\frac{1}{2}\int_\RR \big(u-\util^{-X}\big)^2 d\x
&=\int_\RR \big(u-\util^{-X}\big)\big(u-\util^{-X}\big)_t d\x\\
&=\dot{X}(t)\int_\RR (u-\util^{-X})(\util')^{-X} d\x
-\int_\RR \big(u-\util^{-X}\big) \bigg(\frac{u^2}{2}-\frac{(\util^{-X})^2}{2}\bigg)_\x d\x\\
&\qquad
+\int_\RR \big(u-\util^{-X}\big)(u-\util^{-X})_{\x\x} d\x
-\d\int_\RR \big(u-\util^{-X}\big)\big(u-\util^{-X}\big)_{\x\x\x} d\x.
\end{align*}
We analyze the right-hand side on a term-by-term basis as follows: first, we have
\begin{align*}
&-\int_\RR (u-\util^{-X})\bigg(\frac{u^2}{2}-\frac{(\util^{-X})^2}{2}\bigg)_\x d\x
=\int_\RR (u-\util^{-X})_\x\bigg(\frac{u^2}{2}-\frac{(\util^{-X})^2}{2}\bigg) d\x\\
&\qquad\qquad=\frac{1}{2}\int_\RR \big(u-\util^{-X}\big)_\x
\Big(\big(u-\util^{-X}\big)^2 + 2\util^{-X}\big(u-\util^{-X}\big)\Big) d\x\\
&\qquad\qquad=\frac{1}{2}\int_\RR \Big(\big(u-\util^{-X}\big)^2\Big)_\x \util^{-X} d\x
=-\frac{1}{2}\int_\RR \big(u-\util^{-X}\big)^2 (\util')^{-X} d\x,
\end{align*}
the dissipation yields a non-positive contribution, formulated as 
\[
\int_\RR \big(u-\util^{-X}\big)\big(u-\util^{-X}\big)_{\x\x} d\x
=-\int_\RR \Big(\big(u-\util^{-X}\big)_\x\Big)^2 d\x,
\]
and the dispersion effect vanishes as 
\[
-\d\int_\RR \big(u-\util^{-X}\big)\big(u-\util^{-X}\big)_{\x\x\x} d\x
=\d\int_\RR \big(u-\util^{-X}\big)_x\big(u-\util^{-X}\big)_{\x\x} d\x =0.
\]
Thus, gathering all, \eqref{key1} is established as we desired. \qed

\section{Proof Details for the Decreasing Intervals} \label{appendix:dec}
\setcounter{equation}{0}
This section is devoted to the proof details for the decreasing intervals of shock profile, i.e., the intervals \(I_i=(\x_i,\x_{i-1})\) with even \(i\ge2\).
Note that for even \(i\ge2\), \(\util'<0\) on \(I_i\) and
\[
\util(\x_{i-1})=u_{i-1} < s < \util(\x_i) = u_i, \qquad
\util_\x(\x_{i-1})=\util_\x(\x_i)=0.
\]
Indeed, from \(E(\x_{i-1})>E(\x_i)\), it simply follows that \(s-u_{i-1} > u_i-s\).

\subsection{Proof of \eqref{dec-decay} in Theorem \ref{thm:shock}}
The idea is similar to that for the increasing intervals, and the key ingredient of the proof is an approximation function for the derivative of shock, as formalized in the following proposition.

\begin{proposition} \label{prop:dec}
Let \(A\) be a constant which satisfies \(\frac{1}{4}<A\le1\).
Let \(k>0\) be a constant.\\
For each even \(i \in \NN\), define \(\l_i \coloneqq k \sqrt{\frac{1}{A}} \sqrt{\frac{s(u_i^2-s^2)}{u_i-u_{i-1}}}\).
Assume that 
\begin{equation} \label{dec-ass}
\frac{1}{2} \le k \le \min \bigg(1, \sqrt{\frac{s^2-u_{i-1}^2}{u_i^2-s^2}}\bigg)
\end{equation}
and that the local extrema do not approach \(s\) too rapidly, in the sense that
\begin{equation} \label{ass10}
\r\coloneqq\frac{s-u_{i-1}}{u_i-s}<10.
\end{equation}
Then, the following holds:
\begin{equation} \label{dec-ineq}
-\util'(\x) \ge \l_i (u_i-\util(\x))^\frac{1}{2}(\util(\x)-u_{i-1})^\frac{1}{2}, \quad \forall \x \in I_i,
\end{equation}
\begin{equation} \label{dec-ui}
3(\r-1) \ge \frac{3k}{4}\pi \sqrt{\frac{2}{A}} \sqrt{\r+1}.
\end{equation}
\end{proposition}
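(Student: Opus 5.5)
The plan follows the proof of Proposition \ref{prop:inc}, mirrored for the decreasing interval \(I_i\), \(i\) even. Parametrize by \(a=\util\in(u_{i-1},u_i)\) and set
\[
h(a)\coloneqq -\util_\x, \qquad p(a)\coloneqq \l_i(u_i-a)^\frac12(a-u_{i-1})^\frac12,
\]
so that \eqref{dec-ineq} is equivalent to \(h\ge p\) on \((u_{i-1},u_i)\).

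\textbf{Endpoint behavior.} Both functions vanish at the two endpoints. From \eqref{hp} with the sign flipped,
\[
h'(a)=-\frac1\d\Big[1+\frac{(a-s)(a+s)}{2h(a)}\Big].
\]
The L'Hôpital computation used for \eqref{hp-ui}, with \(\util_{\x\x}(\x_j)=(s^2-u_j^2)/(2\d)\), gives
\[
\lim_{a\to u_{i-1}+}h'(a)(a-u_{i-1})^\frac12=\frac12\sqrt{\frac{s^2-u_{i-1}^2}{\d}},\qquad
\lim_{a\to u_i-}h'(a)(u_i-a)^\frac12=-\frac12\sqrt{\frac{u_i^2-s^2}{\d}}.
\]
The corresponding limits for \(p'\) are \(\pm\frac{\l_i}{2}(u_i-u_{i-1})^\frac12\). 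To conclude that \(h\) dominates \(p\) near both endpoints, we need \(\l_i^2(u_i-u_{i-1})\) to be smaller than both \((s^2-u_{i-1}^2)/\d\) and \((u_i^2-s^2)/\d\). Since \(\l_i^2(u_i-u_{i-1})=k^2 s(u_i^2-s^2)/A\) and \(\d s<A\), these two inequalities follow exactly from \(k\le1\) and \(k\le\sqrt{(s^2-u_{i-1}^2)/(u_i^2-s^2)}\) in \eqref{dec-ass}. This is why that hypothesis appears.

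\textbf{Contradiction argument.} Suppose \(h<p\) somewhere. Then there are points \(u_{i-1}<b<c<u_i\) with \(h=p\) at both, \(h'-p'\le0\) at \(b\), and \(h'-p'\ge0\) at \(c\). Substituting \(h=p\) into the formula for \(h'\) and multiplying by \(\l_i\d(u_i-a)^\frac12(a-u_{i-1})^\frac12>0\), we define
\[
g(a)\coloneqq \l_i(u_i-a)^\frac12(a-u_{i-1})^\frac12+\frac12(a^2-s^2)+\frac12\l_i^2\d\,(u_i+u_{i-1}-2a),
\]
up to the exact sign bookkeeping. This makes \(g(b)\) and \(g(c)\) have the signs of \(h'-p'\) at those points. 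At the endpoints,
\[
g(u_{i-1})=\frac12\big(u_{i-1}^2-s^2+\l_i^2\d(u_i-u_{i-1})\big)<0
\]
by the same bound as above, and
\[
g(u_i)=\frac12\big(u_i^2-s^2-\l_i^2\d(u_i-u_{i-1})\big)=\frac12(u_i^2-s^2)\Big(1-\frac{k^2\d s}{A}\Big)>0.
\]
The concavity argument is the main obstacle. Unlike the increasing case, \(g''=1-\frac{\l_i}{4}(u_i-a)^{-3/2}(a-u_{i-1})^{-3/2}(u_i-u_{i-1})^2\) now contains a \(+1\) from \(\frac12a^2\), so concavity is not automatic. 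I would bound the negative term from below by its minimum at the midpoint, \(2\l_i(u_i-u_{i-1})^{-1}\), and show this exceeds \(1\). That reduces to \(4k^2 s(u_i^2-s^2)/A>(u_i-u_{i-1})^3\). This is where \(k\ge\frac12\) and \eqref{ass10} enter: \(\r<10\) gives \(u_i-u_{i-1}\le 11(u_i-s)\), and \(u_i-s\) is small because \(u_i\le u_0\le1.15s\). Hence \((u_i-u_{i-1})^3\lesssim(u_i-s)^3\ll s^2(u_i-s)\). If the margin turns out tight, I would fall back on showing that \(g\) changes convexity at most once, via the sign of \(g'''\), as in Proposition \ref{prop:RM}. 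With \(g\) concave, the sign pattern \(g(u_{i-1})<0\), \(g(b)\ge0\), \(g(c)\le0\), \(g(u_i)>0\) is impossible, which proves \eqref{dec-ineq}.

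\textbf{Energy step for \eqref{dec-decay}.} As in the proof of \eqref{inc-decay}, integrate \eqref{dec-ineq} against \(-\util'\) over \(I_i\):
\[
E(\x_{i-1})-E(\x_i)\ge\frac\pi8\l_i(u_i-u_{i-1})^2 .
\]
Write \(u_i=(1+\b)s\), \(u_{i-1}=(1-\a)s\), \(\r=\a/\b\). The left side equals \(\frac16(u_{i-1}-u_i)(u_{i-1}^2+u_{i-1}u_i+u_i^2-3s^2)\) with the appropriate sign, giving
\[
3(\a-\b)-\a^2-\b^2+\a\b\ge\frac34\pi\frac{k}{\sqrt A}\sqrt{2\b+\b^2}\sqrt{\a+\b}.
\]
Drop the negative quadratic terms on the left, use \(\sqrt{2\b+\b^2}\ge\sqrt{2\b}\) on the right, and divide by \(\b\). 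This yields \eqref{dec-ui}.
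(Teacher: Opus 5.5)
Your route is the paper's route. You use the same comparison functions (up to an overall sign), the same auxiliary $g$, and the same endpoint checks from \eqref{dec-ass}. You make the same midpoint reduction of concavity to $(u_i-u_{i-1})^3<4k^2s(u_i^2-s^2)/A$. Your energy computation for \eqref{dec-ui} matches the paper's and is correct.

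One small slip: with your convention $h=-\tilde u_\xi$, at $a\in\{b,c\}$ one has $g(a)=-\delta\,p(a)\,(h'(a)-p'(a))$. So $g$ has the \emph{opposite} sign of $h'-p'$, not the same sign. That is what yields $g(b)\ge0$ and $g(c)\le0$, the pattern you correctly use at the end.

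The genuine gap is the step you flagged as the main obstacle: your estimate for $(u_i-u_{i-1})^3$ does not close.
\begin{itemize}
\item Using $u_i-u_{i-1}<11(u_i-s)$ on all three factors costs $11^3=1331$.
\item With only $u_i-s\le u_0-s\le0.15s$, this gives $(u_i-u_{i-1})^3\le1331(0.15)^2s^2(u_i-s)\approx30\,s^2(u_i-s)$.
\item The hypotheses only guarantee $4k^2s(u_i^2-s^2)/A>2s^2(u_i-s)$ at $k=\frac12$, and only about $8s^2(u_i-s)$ even at $k=1$.
\item Even the sharper bound $u_i-s<s-u_{i-1}\le(u_0-s)/\rho_*$ still gives about $3.1\,s^2(u_i-s)$, which is too large at $k=\frac12$, $A=1$.
\end{itemize}

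Your fallback does not rescue this. $g'''$ is a positive multiple of $u_i+u_{i-1}-2a$, so it changes sign at the midpoint and $g''$ rises then falls. Hence $g$ could be concave--convex--concave, which is compatible with the sign pattern $-,+,-,+$. Excluding that requires $g''<0$ at the midpoint, which is exactly the missing estimate.

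The fix, as in the paper, is to spend $\rho<10$ on one factor only and bound the other two absolutely. Maxima decrease and minima increase, so $u_i-u_{i-1}\le u_0-u_1<\frac{s}{5}$. This uses $u_0\le1.15s$ together with the already proved \eqref{inc-decay}, which gives $s-u_1\le0.15s/3.10$. Then
\[
(u_i-u_{i-1})^3<\frac{s^2}{25}(u_i-u_{i-1})<\frac{11}{25}s^2(u_i-s)<2s^2(u_i-s)<\frac{4k^2s(u_i^2-s^2)}{A},
\]
so $g''\le1-2\lambda_i/(u_i-u_{i-1})<0$ and the contradiction goes through.
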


\paragraph{Proof of \eqref{dec-ineq}:}
To begin with, we define two functions of \(a=\util\) as follows: 
\[
h(a) \coloneqq \util' = \util_\x, \qquad
p(a) \coloneqq -\l_i (u_i-a)^\frac{1}{2} (a-u_{i-1})^\frac{1}{2}.
\]
We now rewrite \eqref{dec-ineq} into the following form:
\begin{equation} \label{dec-ineq2}
h(a) \le p(a), \quad \forall a \in (u_{i-1},u_i).
\end{equation}
Then we observe the two functions at the endpoints.
First, since the shock has local extrema at \(\x_{i-1}\) and \(\x_i\), the functions vanish at the endpoints:
\[
h(u_{i-1})=p(u_{i-1})=0, \qquad
h(u_i)=p(u_i)=0.
\]
To observe the derivatives \(h'\) and \(p'\), we note by \eqref{hp} that
\[
h'(a) = \frac{d\util_\x}{d\util} = \frac{1}{\d} \Big[1 - \frac{1}{2h(a)}(a-s)(a+s)\Big]
\]
and that
\[
p'(a)=
\frac{\l_i}{2} (u_i-a)^{-\frac{1}{2}} (a-u_{i-1})^\frac{1}{2}
-\frac{\l_i}{2} (u_i-a)^\frac{1}{2} (a-u_{i-1})^{-\frac{1}{2}}.
\]
Both \(h'(a)\) and \(p'(a)\) diverge as \(a\) approaches to the endpoints, i.e., when \(a \to u_{i-1}\) and \(a \to u_i\).
The divergence rate of \(p'(a)\) is given as follows: 
\[
\lim_{a\to u_{i-1}+} p'(a)(a-u_{i-1})^\frac{1}{2} = -\frac{\l_i}{2} (u_i-u_{i-1})^\frac{1}{2}, \qquad
\lim_{a\to u_i-} p'(a)(u_i-a)^\frac{1}{2} = \frac{\l_i}{2} (u_i-u_{i-1})^\frac{1}{2}.
\]
Moreover, similarly to the computations for \eqref{hp-ui} and \eqref{hp-ui-1}, we obtain 
\[
\lim_{a\to u_{i-1}+} h'(a)(a-u_{i-1})^\frac{1}{2}
=-\frac{1}{2}\sqrt{\frac{s^2-u_{i-1}^2}{\d}}, \qquad
\lim_{a\to u_i-} h'(a)(u_i-a)^\frac{1}{2}
=\frac{1}{2}\sqrt{\frac{u_i^2-s^2}{\d}}.
\]
Then, from the assumption \eqref{dec-ass}, it follows that
\[
-\lim_{a\to u_{i-1}+} h'(a)(a-u_{i-1})^\frac{1}{2}
=\frac{1}{2}\sqrt{\frac{s^2-u_{i-1}^2}{\d}}
> \frac{\l_i}{2} (u_i-u_{i-1})^\frac{1}{2}
=-\lim_{a\to u_{i-1}+} p'(a)(a-u_{i-1})^\frac{1}{2}
\]
and
\[
\lim_{a\to u_i-} h'(a)(u_i-a)^\frac{1}{2}
=\frac{1}{2}\sqrt{\frac{u_i^2-s^2}{\d}}
>
\frac{\l_i}{2} (u_i-u_{i-1})^\frac{1}{2}
=\lim_{a\to u_i-} p'(a)(u_i-a)^\frac{1}{2}.
\]

We now argue by contradiction and assume that \eqref{dec-ineq2} fails.
The above comparison of \(h\) and \(p\) at the endpoints implies that there exist two points \(b\) and \(c\) such that \(u_{i-1}<b<c<u_i\) and
\begin{align*}
&h(b)=p(b), &&h'(b)-p'(b) \ge 0\\
&h(c)=p(c), &&h'(c)-p'(c) \le 0.
\end{align*}
We define a function \(g:[u_{i-1},u_i]\to\RR\) by
\[
g(a) \coloneqq
\l_i(u_i-a)^\frac{1}{2}(a-u_{i-1})^\frac{1}{2}
+\frac{1}{2}(a-s)(a+s)
+\frac{\l_i^2\d}{2} (u_i+u_{i-1}-2a).
\]
Then, for \(a=b\) and \(a=c\), we have
\[
h'(a)-p'(a)
=\frac{1}{\d}\Big[1-\frac{1}{2p(a)}(a-s)(a+s)\Big]
-\frac{\l_i}{2}(u_i-a)^{-\frac{1}{2}} (a-u_{i-1})^\frac{1}{2}
+\frac{\l_i}{2}(u_i-a)^\frac{1}{2} (a-u_{i-1})^{-\frac{1}{2}},
\]
and thus it follows that
\begin{align*}
g(b) = \l_i \d (h'(b)-p'(b)) (u_i-b)^\frac{1}{2} (b-u_{i-1})^\frac{1}{2} \ge0,\\
g(c) = \l_i \d (h'(c)-p'(c)) (u_i-c)^\frac{1}{2} (c-u_{i-1})^\frac{1}{2} \le0.
\end{align*}
Thanks to the assumption \eqref{dec-ass}, the sign of the function \(g\) at the endpoints is determined: 
\[
g(u_i)
= \frac{1}{2} (u_i^2-s^2) - \frac{\l_i^2\d}{2} (u_i-u_{i-1})
= \frac{u_i^2-s^2}{2} \Big(1-k^2\frac{\d s}{A}\Big) >0,
\]
\[
g(u_{i-1})
= -\frac{1}{2} (s^2-u_{i-1}^2) + \frac{\l_i^2\d}{2} (u_i-u_{i-1})
= -\frac{1}{2} \Big( (s^2-u_{i-1}^2) -k^2\frac{\d s}{A} (u_i^2-s^2) \Big) <0.
\]
On the other hand, we claim that the function \(g\) is strictly concave.
This can be verified as follows.
First, using \eqref{u0-exp} and \eqref{inc-decay}, we obtain \(u_i-u_{i-1}<u_0-u_1<\frac{1}{5}s\).
This with \eqref{ass10} yields that
\[
(u_i-u_{i-1})^3
< \frac{s^2 (u_i-u_{i-1})}{25}
< \frac{s^2 (u_i-u_{i-1})}{20A}
< \frac{s^2 (s-u_{i-1})}{10A}
< \frac{s^2 (u_i-s)}{A}
< 4k^2\frac{s (u_i^2-s^2)}{A},
\]
from which the following holds: 
\[
g''(a)
=1 -\frac{\l_i}{4} \frac{(u_i-u_{i-1})^2}{(u_i-a)^\frac{3}{2} (a-u_{i-1})^\frac{3}{2}}
\le 1 -\frac{\l_i}{4} \frac{(u_i-u_{i-1})^2}{(\frac{u_i-u_{i-1}}{2})^3}
= 1-\frac{2\l_i}{u_i-u_{i-1}} <0.
\]
To summarize, the function \(g\) is concave and 
\[
g(u_{i-1}) < 0, \qquad
g(b) \ge 0, \qquad
g(c) \le 0, \qquad
g(u_i) > 0, \qquad
(u_{i-1}<b<c<u_i).
\]
This is a contradiction.
This completes the proof of \eqref{dec-ineq2} and \eqref{dec-ineq}.

\paragraph{Proof of \eqref{dec-ui}:}
We use \eqref{dec-ineq} together with \eqref{t1}-\eqref{t2} to find that
\begin{align*}
E(\x_{i-1})-E(\x_i)
&=\frac{1}{6}(u_{i-1}-u_i)(u_{i-1}^2+u_{i-1}u_i+u_i^2-3s^2)
=\int_{\x_i}^{\x_{i-1}} (\util')^2 d\x\\
&\ge \l_i \int_{\x_i}^{\x_{i-1}} (u_i-\util)^\frac{1}{2} (\util-u_{i-1})^\frac{1}{2} (-\util') d\x\\
&= \l_i \int_{u_{i-1}}^{u_i} (u_i-\util)^\frac{1}{2} (\util-u_{i-1})^\frac{1}{2} d\util
= \frac{\pi}{8}\l_i (u_i-u_{i-1})^2.
\end{align*}
Then, it follows that
\[
3s^2-u_{i-1}^2-u_{i-1}u_i-u_i^2
\ge \frac{3k}{4}\pi \sqrt{\frac{1}{A}} \sqrt{s(u_i^2-s^2)} \sqrt{u_i-u_{i-1}}.
\]
Letting \(u_{i-1}=(1-\b)s\) and \(u_i=(1+\a)s\), we obtain
\[
3(\b-\a)-\a^2-\b^2+\a\b \ge \frac{3k}{4}\pi \sqrt{\frac{1}{A}} \sqrt{2\a+\a^2} \sqrt{\a+\b}.
\]
Then, \(\r=\frac{\b}{\a}=\frac{s-u_{i-1}}{u_i-s}>1\), and we rewrite the above inequality into the following form:
\[
3(\r-1)-(\r^2-\r+1)\a
\ge \frac{3k}{4}\pi \sqrt{\frac{1}{A}} \sqrt{2+\a}\sqrt{\r+1}.
\]
Since \(\a>0\), it follows that
\[
3(\r-1) \ge \frac{3k}{4}\pi \sqrt{\frac{2}{A}} \sqrt{\r+1}.
\]
This completes the proof of \eqref{dec-ui} and Proposition \ref{prop:dec}. \qed

\vspace{2mm}
Proposition \ref{prop:dec} leads us to the following decay rate \eqref{dec-decay} in Theorem \ref{thm:shock}.

\paragraph{Proof of \eqref{dec-decay} in Theorem \ref{thm:shock}.}
It suffices to consider the case where \eqref{ass10} holds.\\
We first recall from \eqref{inc-decay} and \eqref{u0-exp} that, for each selected \(A\), \(u_1\ge u_{i-1}>0.9s\), which implies \(u_i\le u_2<1.1s\).
Thus, it follows that
\[
\sqrt{\frac{s^2-u_{i-1}^2}{u_i^2-s^2}}
> \sqrt{\frac{u_{i-1}+s}{u_i+s}}
> \sqrt{\frac{2}{2.1}} > \frac{11}{12}.
\]
This shows that \(k=\frac{11}{12}\) satisfies the condition \eqref{dec-ass}.
Then, using \eqref{dec-ui}, we obtain 
\[
3(\r-1)
\ge \frac{3k}{4}\pi \sqrt{\frac{2}{A}} \sqrt{\r+1}
=\frac{11}{16}\pi \sqrt{\frac{2}{A}} \sqrt{\r+1}
\]
where \(\r=\frac{s-u_{i-1}}{u_i-s}>1\).
Notice that for any \(\frac{1}{4}<A\le1\), the left-hand side grows faster than the right-hand side, and the above inequality fails at \(\r=3\).
Thus, we have \(\r\ge3\), which in turn implies
\[
\sqrt{\frac{s^2-u_{i-1}^2}{u_i^2-s^2}} > 1.
\]
Hence, \(k=1\) now satisfies the condition \eqref{dec-ass}, and so we use \eqref{dec-ui} to find that
\[
3(\r-1)
\ge \frac{3}{4}\pi \sqrt{\frac{1}{A}} \sqrt{2} \sqrt{\r+1}.
\]
Once this inequality is resolved, the desired decay rate \eqref{dec-decay} for each selected value \(A\) follows by an argument similar to that for the increasing case.
This completes the proof of \eqref{dec-decay}.
\qed

\subsection{Proof of Proposition \ref{prop:key} for the Decreasing Intervals}
This subsection is devoted to the proof of \textit{Case 3} in Proposition \ref{prop:key}, concerning the decreasing intervals of the shock profile other than the rightmost one.
The proof is based on the contradiction argument developed in this paper.
Note that we consider only the case \(A=\frac{1}{2}\).

\vspace{2mm}
\begin{pf}
Firstly, we define two functions in terms of \(a=\util\) as follows: 
\[
h(a)\coloneqq\util'=\util_\x, \qquad
p(a) \coloneqq \lbar_i(a-u_i)(a-u_{i-1}).
\]
Our goal is now to prove
\begin{equation} \label{dec:key2}
h(a) \le p(a), \quad \forall a \in (u_{i-1},u_i).
\end{equation}
As before, we examine the two functions at the endpoints.
It simply holds that 
\[
h(u_{i-1})=p(u_{i-1})=0, \qquad
h(u_i)=p(u_i)=0.
\]
Moreover, since \(p'(a)=\lbar_i(2a-u_{i-1}-u_i)\), it follows that
\[
p'(u_i)=\lbar_i(u_i-u_{i-1}), \qquad
p'(u_{i-1})=-\lbar_i(u_i-u_{i-1}),
\]
and thus, we obtain 
\[
\lim_{a\to u_i-} h'(a) = +\infty > p'(u_i), \qquad
\lim_{a\to u_{i-1}+} h'(a) = -\infty < p'(u_{i-1}).
\]

We argue by contradiction to prove \eqref{dec:key2} and assume that \(h(a)>p(a)\) for some \(a\in(u_{i-1},u_i)\).
As a consequence of the observations above, we choose \(b\) and \(c\) such that \(u_{i-1}<b<c<u_i\) and
\begin{align*}
&p(b)=h(b), &&h'(b)-p'(b) \ge 0\\
&p(c)=h(c), &&h'(c)-p'(c) \le 0.
\end{align*}
To obtain a contradiction, we define a function \(g:[u_{i-1},u_i]\to\RR\) as follows:
\[
g(a) \coloneqq
2\lbar_i(u_i-a)(a-u_{i-1})
+(a-s)(a+s)
-2\lbar_i^2\d(u_i-a)(a-u_{i-1})(2a-u_{i-1}-u_i),
\]
which satisfies 
\[
g(u_{i-1}) = u_{i-1}^2-s^2 <0, \qquad
g(u_i) = u_i^2-s^2 >0,
\]
and 
\begin{align*}
&g(b)=2\lbar_i\d(u_i-b)(b-u_{i-1})\big(h'(b)-p'(b)\big)\ge0,\\
&g(c)=2\lbar_i\d(u_i-c)(c-u_{i-1})\big(h'(c)-p'(c)\big)\le0.
\end{align*}
We now claim that the function \(g\) is concave.
Since \(g'''(a)=24\lbar_i^2 \d>0\), it is enough to show 
\[
g''(u_i) = -4\lbar_i+2+12\lbar_i^2\d(u_i-u_{i-1}) <0.
\]
Then, we observe that \((u_i-u_{i-1}) \le (\r_*)^{-(i-1)} (u_0-u_1) \le (\r_*)^{-i} (\r_*+1) r s\) and that \(\lbar_i\) satisfies
\[
\frac{2-\sqrt{4-12 t^{-i}(t+1) r}}{6 t^{-i}(t+1) r}
< \lbar_i
< \frac{2+\sqrt{4-12 t^{-i}(t+1) r}}{6 t^{-i}(t+1) r},
\]
from which the following holds:
\[
12\lbar_i^2\d(u_i-u_{i-1})
\le12\lbar_i^2 \d s (\r_*)^{-i} (\r_*+1) r 
<6 \lbar_i^2 t^{-i}(t+1) r
<4\lbar_i-2.
\]
Therefore, the concavity of the function \(g\) contradicts to
\[
g(u_{i-1}) < 0, \qquad
g(b) \ge 0, \qquad
g(c) \le 0, \qquad
g(u_i) > 0, \qquad
(u_{i-1}<b<c<u_i).
\]
This completes the proof of \textit{Case 3} in Proposition \ref{prop:key}.
\end{pf}

\vspace{2mm}
\noindent\textbf{Acknowledgement.}
The first author is partially supported by National Science Foundation (DMS-2306258).
The second and third authors were supported by Samsung Science and Technology Foundation under Project Number SSTF-BA2102-01.
The fourth author is partially supported by National Science Foundation (DMS-2206218).
The first and fourth authors are partially supported by a SQuaRE at the American Institute of Mathematics.

\vspace{2mm}
\noindent\textbf{Declaration of competing interest.}
The authors declared that they have no conflict of interest to this work.

\vspace{2mm}
\noindent\textbf{Data availability statement.}
We do not analyze or generate any datasets, because our work proceeds within a theoretical and mathematical approach.

\bibliographystyle{plain}
\bibliography{reference}

\end{document}